\documentclass[12pt]{amsart}
\usepackage{geometry}   %????????
\usepackage[colorlinks,citecolor = red, linkcolor=blue,hyperindex]{hyperref}
\usepackage{euscript,eufrak,verbatim, mathrsfs, amscd}
\usepackage[psamsfonts]{amssymb}
\usepackage{bbm}
\usepackage{graphicx}
 \usepackage{float}
\usepackage{float, tikz}

\usepackage{cite}

 \usepackage[all, cmtip]{xy}

\usepackage{upref, xcolor, dsfont}
\usepackage{amsfonts,amsmath,amstext,amsbsy, amsopn,amsthm}
\usepackage{enumerate}

\usepackage{url}

\usepackage{mathtools}
\usepackage{bookmark}

 \usepackage{euscript}
\usepackage{helvet}         % selects\textbf{\textbf{•}} Helvetica as sans-serif font
\usepackage{courier}        % selects Courier as typewriter font
\usepackage{type1cm}        % activate if the above 3 fonts are
%                            % not available on your system
%%
%\usepackage{makeidx}         % allows index generation
%                             % when including figure files
\usepackage{multicol}        % used for the two-column index
\usepackage[bottom]{footmisc}% places footnotes at page bottom

\newtheorem{theorem}{Theorem}[section]
\newtheorem*{theorem*}{Theorem B}
\newtheorem{lemma}[theorem]{Lemma}

\newtheorem{proposition}[theorem]{Proposition}
\newtheorem{corollary}[theorem]{Corollary}
\newtheorem*{definition*}{Definition}
\newtheorem*{remark*}{Remark}

\newtheorem*{observation*}{Observation}

\newtheorem*{assumption*}{Assumption}

%%%%%%%%%%%%%%%%%%%%%%%%%%%%%%5
\theoremstyle{definition}

%%%%%%%%%%%%%%%%%%%%%%%%
%\theoremstyle{remark}
\newtheorem{remark}[theorem]{Remark}

\geometry{left=2.8cm,right=2.8cm,top=2.5cm,bottom=2.5cm}

\begin{document}

\title[Toeplitz operators]{Toeplitz operators on weighted Bergman spaces induced by a class of radial weights}

\author%[authorlabel1]
{Yongjiang Duan}
\address%[authorlabel1]
{Yongjiang DUAN: School of
Mathematics and Statistics, Northeast Normal University, Changchun, Jilin 130024, P.R.China}
\email{duanyj086@nenu.edu.cn}
%\email{tba}

\author%[authorlabel1]
{Kunyu Guo}
\address%[authorlabel1]
{Kunyu GUO: School of Mathematical Sciences, Fudan University, Shanghai 200433, P.R.China}
\email{kyguo@fudan.edu.cn}

\author%[authorlabel1]
{Siyu Wang}
\address%[authorlabel1]
{Siyu WANG: School of
Mathematics and Statistics, Northeast Normal University, Changchun, Jilin 130024, P.R.China}
\email{wangsy696@nenu.edu.cn}

\author{Zipeng Wang}
\address{Zipeng WANG: College of Mathematics and Statistics, Chongqing University, Chongqing,
401331, P.R.China}
\email{zipengwang2012@gmail.com, zipengwang@cqu.edu.cn}

\begin{abstract}
Suppose that $\omega$ is a radial weight on the unit disk that satisfies both forward and reverse doubling conditions. Using Carleson measures and $T1$-type conditions, we obtain necessary and sufficient conditions of the positive Borel measure $\mu$ such that the Toeplitz operator $T_{\mu,\omega}:L^p_a(\omega)\to L_a^1(\omega)$ is bounded and compact for $0<p\leq 1$. In addition,
we obtain a bump condition for the bounded Toeplitz operators with $L^1(\omega)$ symbol on $L^1_a(\omega)$. This generalizes a result of Zhu in \cite{zhu1989}.
\end{abstract}

\subjclass[2010]{47B35; 30H20}
\keywords{Toeplitz operator, Carleson measures, $\mathcal{D}$ weights, regular weights}

\maketitle

\section{Introduction}
For $0< p<+\infty$, let $L^p(\omega)$ denote the set of measurable functions on the unit disk $\mathbb{D}$ with
\[
\int_\mathbb{D}|f(z)|^p\omega(z)dA(z)<+\infty,
\]
where $\omega$ is a positive integrable function on $\mathbb{D}$ and $dA(z)=\frac{1}{\pi}dxdy$ is the normalized area measure on the complex plane $\mathbb{C}$. The function $\omega$ is called a \textit{weight} on $\mathbb{D}$.
Let $H(\mathbb{D})$ be the set of analytic functions on $\mathbb{D}$. We denote the \textit{weighted Bergman spaces} by
\[
L^{p}_a(\omega)=L^p(\omega)\cap H(\mathbb{D}).
\]
If $\omega\equiv 1$, then $L_a^p(\omega)$ is the unweighted Bergman space $L_a^p(\mathbb{D})$.

A weight $\omega$ is \textit{radial} if $\omega(z)=\omega(|z|)$. A radial weight $\omega$ is called a \textit{$\mathcal{D}$ weight} \cite{PR19,PR1907} if it satisfies the \textit{forward and reverse doubling condition}, i.e., there exist constants $C_1,C_2\in(1,\infty)$ and $C_3\in[1,\infty)$ such that
\begin{equation}\label{Definition of D}
C_1\widehat{\omega}\bigg(1-\frac{1-r}{C_2}\bigg)\leq \widehat{\omega}(r)\leq C_3\widehat{\omega}\bigg(\frac{1+r}{2}\bigg), \quad r\in [0,1),
\end{equation}
where $\widehat{\omega}(z)=\int_{|z|}^{1}\omega(r)dr$ for all $z\in\mathbb{D}$. The $\mathcal{D}$ weight naturally arises in operator theory on spaces of analytic
functions. The classical example of $\mathcal{D}$ weight is the standard weight $(\alpha+1)(1-|z|^2)^\alpha$ on $\mathbb{D}$ when $-1<\alpha<\infty$ \cite[Chapter 4]{zhu2007}. In some sense, the class of $\mathcal{D}$ weights is the largest possible class of weights for which most of the results concerning the standard weighted Bergman space $L_a^p(dA_\alpha)$ hold through with appropriate modifications, where
$dA_\alpha(z)=(\alpha+1)(1-|z|^2)^\alpha dA(z)$ for $-1<\alpha<\infty$. For instance, Pel\'{a}ez and R\"{a}tty\"{a} \cite[Theorem 3 and Theorem 5]{PR19} showed that the class of $\mathcal{D}$ weights is the largest class of radial weights $\omega$ such that the Bergman projection $P_\omega$ satisfies the $L^\infty$-$\text{BMO}$ estimate and the Littlewood-Paley formula holds in $L_a^p(\omega)$. The recent textbook of Pavlovi\'{c} \cite[Section 3.6.2, 3.6.3]{Pav} contains more details of weighted Bergman spaces induced by radial weights.

Note that $L_a^2(\omega)$ is a reproducing kernel Hilbert space when $\omega$ is a $\mathcal{D}$ weight (see \cite{PR19}).
Let $K_{z}^\omega$ be the reproducing kernel of $L_a^2(\omega)$ and $\mu$ be a finite positive Borel measure. We define the \textit{Toeplitz operator} $T_{\mu,\omega}$ through the integral formula
\begin{align}\label{E:def-toeplitz}
T_{\mu,\omega}(f)(z)=\int_{\mathbb{D}}f(\zeta)\overline{K_{z}^{\omega}(\zeta)}d\mu(\zeta).
\end{align}
The measure $\mu$ is called the symbol of the Toeplitz operator $T_{\mu,\omega}$.

One basic (but non-trivial) problem concerning the Toeplitz operator $T_{\mu,\omega}$ is to characterize its boundedness and compactness on the Bergman space $L^p_a(\omega)$. On the unweighted Bergman space, Luecking \cite{L1987} solved this problem for Toeplitz operators with positive symbols. Zhu \cite{Zhu1988} then extended Luecking's result to the standard weighted Bergman spaces on bounded symmetric domains of $\mathbb{C}^n$.
Agbor, B\'ekoll\'e and Tchoundja \cite{ABT-2011} studied bounded and compact Toeplitz operators on the
endpoint Bergman space $L^1_a(\mathbb{D})$. The results for the boundedness and compactness of Toeplitz operators on Bloch spaces can be found in \cite{WL-2010,WZZ-2006}.

In a series of works \cite{PR2,PR,PR3,PR1,PR00,PR19,PR1907,PR4,PRS}, Pel\'{a}ez and R\"{a}tty\"{a} provided a framework to investigate operator theory and harmonic analysis problems on a large class of weighted Bergman spaces with radial weights. Very recently, Pel\'{a}ez, R\"{a}tty\"{a} and Sierra \cite{PRS} described the boundedness and compactness of Toeplitz operators with positive symbols between weighted Bergman spaces $L^p_a(\omega)$ and $L_a^q(\omega)$ when the weight $\omega$ is regular and $1<p,q<+\infty$. Recall a continuous radial weight $\omega$ is \textit{regular} if $\psi_{\omega}(r)\asymp(1-r)$ for $0\leq r<1$, where $\psi_{\omega}(r)=\frac{\widehat{\omega}(r)}{\omega(r)}$ is the distortion function \cite{S}. Notice that a regular weight is a $\mathcal{D}$ weight. For basic properties of these classes of weights, one can consult \cite{PR,PR19} and the references therein.

In the first part of this paper, we present necessary and sufficient conditions for boundedness and compactness of Toeplitz operators acting
on weighted Bergman spaces $L_a^p(\omega)$ induced by $\mathcal{D}$ weights for $0<p\leq 1$.

For $0<p,q<\infty$, a positive Borel measure $\mu$ on $\mathbb{D}$ is said to be a \textit{$q$-Carleson measure for} $L_{a}^{p}(\omega)$ if $L_{a}^{p}(\omega)$ can be boundedly embedded into $L^q(\mu)$, that is, there exists a constant $C$ such that
\[
\|f\|_{L^q(\mu)}\leq C\|f\|_{L^p(\omega)}, \quad \forall f\in L_a^p(\omega).
\]
Also a positive Borel measure $\mu$ is a \textit{vanishing~$q$-Carleson~measure~for} $L_{a}^{p}(\omega)$ if the identity operator $Id:L_{a}^{p}(\omega)\rightarrow L^{q}(\mu)$ is compact. One can consult \cite{PZ1,PZ2,PR,zhu2007} for more examples of (vanishing) Carleson measures on analytic function spaces.

In the case when $1<p<\infty$, the boundedness (compactness) of Toeplitz operators with positive symbols can be characterized exactly by Carleson measure (vanishing Carleson measure) type conditions. However, the Carleson measure type condition itself is not enough to deal with the case when $0<p\leq1$. Indeed, finding the extra condition is the primary difficulty in our study. We find an appropriate $T1$-type condition to achieve this goal when we seek suitable testing functions. Since one can not write down an explicit expression of the reproducing kernel $K_z^\omega$ for a general $\mathcal{D}$ weight, this difficulty makes us to encounter some obstacles while looking for suitable testing functions and desired estimates. By a delicate analysis on some classes of radial weights, we construct and study several auxiliary weights (see Section 2 for details). This leads us to deal with Toeplitz operators $T_{\mu,\alpha}$ instead of $T_{\mu,\omega}$ in the $T1$-type condition, where
\[
T_{\mu,\alpha}(f)(z)=
\int_{\mathbb{D}}\frac{f(\zeta)}{(1-z\bar{\zeta})^{\alpha+2}}d\mu(\zeta).
\]
One can refer to \cite[Chapter 4 and Chapter 7]{zhu2007} for more details about the Toeplitz operators $T_{\mu,\alpha}$ and the reproducing kernels of standard weighted Bergman spaces $L_{a}^{2}(dA_{\alpha})$.

Let $\omega$ be a weight and $t\in\mathbb{R}$. For $f\in H(\mathbb{D})$, we define
\[
\|f\|_{\mathcal{LB}^{t}_{\omega}}=|f(0)|+
\sup_{z\in\mathbb{D}}\frac{(1-|z|^{2})^{t+1}}{\omega(z)}\log\bigg(\frac{2}{1-|z|^{2}}\bigg)|f'(z)|.
\]

The following is our main result of this paper, which characterizes the boundedness of the Toeplitz operators on $L_{a}^{p}(\omega)$ for $p\in(0,1]$ and $\omega\in\mathcal{D}$.

\begin{theorem} \label{T:regular}
Suppose that $\omega$ is a $\mathcal{D}$ weight and $\mu$ is a positive Borel measure on $\mathbb{D}$. For $0<p\leq1$, set $\sigma^p(z)=\widehat{\omega}(z)$.  Then the Toeplitz operator $T_{\mu,\omega}: L^{p}_{a}(\omega)\to L^{1}_{a}(\omega)$ is bounded if and only if $\mu$ is a $\frac{1}{p}$-Carleson measure for $L^{1}_{a}(\omega)$ and there exists a positive constant $t_{0}=t_{0}(\omega)$ such that $\|T_{\mu,t}(1)\|_{\mathcal{LB}^{s}_{\sigma}}<\infty$, for each $t\geq t_{0}$ and $s=t-\frac{1}{p}+2$.
\end{theorem}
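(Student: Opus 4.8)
The plan is to measure the target norm through the Littlewood--Paley description of $L^1_a(\omega)$ that is available for $\omega\in\mathcal{D}$, namely $\|g\|_{L^1(\omega)}\asymp|g(0)|+\int_{\mathbb{D}}|g'(z)|(1-|z|)\omega(z)\,dA(z)$, and to systematically replace the implicit kernel $K^{\omega}_{z}$ by the explicit kernels $(1-z\bar\zeta)^{-(t+2)}$. The engine for the latter is the two-sided estimates for reproducing kernels of $\mathcal{D}$ weights together with the auxiliary weights constructed in Section~2: these let me compare $\partial_{z}\overline{K^{\omega}_{z}(\zeta)}$ with $\bar\zeta\,(1-z\bar\zeta)^{-(t+3)}$ up to weight factors governed by $\widehat{\omega}$. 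It is exactly this comparison that produces the weight $\sigma=\widehat{\omega}^{1/p}$ and the shifted exponent $s=t-\frac1p+2$, and that explains why one is free to take any $t\ge t_{0}$ (the comparison requires the explicit kernel to decay at least as fast as the derivative of $K^{\omega}$).

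For sufficiency I would run a $T1$-type splitting. Inserting $f(\zeta)=f(z)+(f(\zeta)-f(z))$ into the integral for $(T_{\mu,\omega}f)'$ gives the pointwise identity
\[
(T_{\mu,\omega}f)'(z)=f(z)\,(T_{\mu,\omega}1)'(z)+\int_{\mathbb{D}}\bigl(f(\zeta)-f(z)\bigr)\overline{\partial_{z}K^{\omega}_{z}(\zeta)}\,d\mu(\zeta).
\]
Integrating against $(1-|z|)\omega\,dA$, the first, paraproduct, term is dominated using the hypothesis $\|T_{\mu,t}(1)\|_{\mathcal{LB}^{s}_{\sigma}}<\infty$: after the kernel comparison this yields the pointwise bound on $(T_{\mu,\omega}1)'$ that reduces the term to a fixed, $\mu$-independent embedding of $L^{p}_{a}(\omega)$ into a weighted $L^{1}$-space, which holds precisely because of the choice of $s$ and $\sigma$. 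The second, local, term exploits the cancellation $f(\zeta)-f(z)$ to defeat the singularity of the kernel derivative and is then controlled by the $\frac1p$-Carleson hypothesis on $\mu$. To pass from this one-function analysis to the operator norm when $0<p\le1$, I would use the atomic decomposition of $L^{p}_{a}(\omega)$ together with the elementary inequality $\sum_{k}|\lambda_{k}|\le\bigl(\sum_{k}|\lambda_{k}|^{p}\bigr)^{1/p}$, so that boundedness reduces to a uniform estimate over atoms.

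For necessity, assume $T_{\mu,\omega}\colon L^{p}_{a}(\omega)\to L^{1}_{a}(\omega)$ is bounded. The $\frac1p$-Carleson condition I would extract in the standard way, testing the operator on suitable families of normalized reproducing kernels localized at a point and reading off the geometric Carleson-box estimate, which characterizes the $\frac1p$-Carleson measures for $L^{1}_{a}(\omega)$. The $T1$ condition is the subtler half: applying the operator to the constant $1$ only gives $T_{\mu,\omega}1\in L^{1}_{a}(\omega)$, which is weaker than the required supremum bound, so instead I would reuse the splitting identity, now with the local term already controlled by the Carleson condition, to bound the integrated paraproduct $\int_{\mathbb{D}}|f(z)|\,|(T_{\mu,\omega}1)'(z)|(1-|z|)\omega(z)\,dA(z)$ uniformly over the unit ball of $L^{p}_{a}(\omega)$. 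The characterization of Carleson measures for $L^{p}_{a}(\omega)$ then converts this integrated control into the pointwise $\mathcal{LB}^{s}_{\sigma}$ bound on $(T_{\mu,\omega}1)'$, and hence, after kernel comparison, on $T_{\mu,t}(1)$.

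I expect the main obstacle to be exactly the kernel comparison and the attendant bookkeeping. Since $K^{\omega}$ has no closed form, every estimate must be routed through the auxiliary weights, and the endpoint logarithm $\log\bigl(2/(1-|z|^{2})\bigr)$ in the $\mathcal{LB}$ norm has to be produced and matched correctly; it arises from a borderline integral of the type $\int(1-r)^{-1}\,dr$ in the tail of the paraproduct estimate. Getting the three parameters $\sigma=\widehat{\omega}^{1/p}$, $s=t-\frac1p+2$, and the Carleson exponent $\frac1p$ to align simultaneously, while keeping the comparison valid for all large $t$, is where the delicate analysis of Section~2 is indispensable.
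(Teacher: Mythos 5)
Your plan rests on an engine that does not exist: a pointwise, off-diagonal comparison of $\partial_{z}\overline{K^{\omega}_{z}(\zeta)}$ (or of $K^{\omega}_{z}(\zeta)$ itself) with the explicit kernels $\bar\zeta\,(1-z\bar\zeta)^{-(t+3)}$ ``up to weight factors governed by $\widehat{\omega}$.'' For a general $\mathcal{D}$ weight the only available kernel information is of the type in Proposition \ref{lla45}: norm estimates for $\|K^{\omega}_{z}\|_{L^{p}_{a}(\omega)}$ and $\|K^{\omega}_{z}\|_{\mathcal{B}}$, plus the near-diagonal estimate $|K^{\omega}_{z}(\zeta)|\asymp K^{\omega}_{z}(z)$ for $\zeta\in D(z,r)$. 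Away from the diagonal there is no two-sided pointwise control of $K^{\omega}_{z}(\zeta)$, let alone of its derivative with the phase coherence a comparison of derivatives would require; this is precisely the obstacle the introduction of the paper flags. Since your sufficiency argument needs this comparison to turn the hypothesis on $T_{\mu,t}(1)$ into control of $(T_{\mu,\omega}1)'$, and your necessity argument needs it in the reverse direction to pass from $(T_{\mu,\omega}1)'$ to $(T_{\mu,t}1)'$, both halves collapse at the same point. The paper's proof is organized exactly so as to never need such estimates: by the duality $(L^{1}_{a}(\omega))^{*}\simeq\mathcal{B}$ (Lemma \ref{L:bloch}) one has the Fubini identity $\langle T_{\mu,\omega}f,g\rangle_{L^{2}(\omega)}=\int_{\mathbb{D}}f\bar{g}\,d\mu$, which removes $K^{\omega}$ from the analysis altogether; the explicit kernels then enter by choice rather than by comparison --- through the normalized test functions $h_{z}(\zeta)=\frac{(1-|z|^{2})^{s+1}}{\sigma(z)}(1-\zeta\bar z)^{-(t+3)}$ in the necessity, and through the auxiliary projection $P_{t+1}$ applied to $f\bar g$ in the sufficiency --- and the cancellation that defeats the singularity is taken in the Bloch symbol, $g(z)-g(u)$, via \eqref{E:bloch}, not in $f$ as in your splitting.

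There is a second, independent gap in your necessity argument: the logarithm. In the $\mathcal{LB}^{s}_{\sigma}$ norm the factor $\log\frac{2}{1-|z|^{2}}$ makes the conclusion \emph{stronger}, and in the paper it is produced by duality, namely by testing the bound $\frac{(1-|z|^{2})^{s+1}}{\sigma(z)}|g(z)T_{\mu,t+1}(1)(z)|\lesssim\|g\|_{\mathcal{B}}$ against Bloch functions whose modulus at $z$ attains the extremal growth $\log\frac{2}{1-|z|^{2}}$ (Theorem 5.7 of \cite{zhu2007}). Your mechanism --- an integrated bound $\int_{\mathbb{D}}|f||(T_{\mu,\omega}1)'|(1-|z|)\omega\,dA\lesssim\|f\|_{L^{p}(\omega)}$ converted into a pointwise bound through the Carleson-measure characterization and subharmonicity --- has no source for this factor; it yields an estimate weaker by a logarithm than what the theorem asserts, and the claim that the log ``arises from a borderline integral $\int(1-r)^{-1}dr$'' is not substantiated by any step you describe. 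Finally, a smaller issue: your reduction to atoms invokes an atomic decomposition of $L^{p}_{a}(\omega)$, $0<p\leq1$, for general $\mathcal{D}$ weights, which is an unproven ingredient here; the paper avoids it by writing $|f|=|f|^{p}|f|^{1-p}$ and absorbing $|f|^{1-p}$ through the subharmonicity estimate \eqref{pro-sub}, which is all that is needed once the argument is phrased under the duality pairing.
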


We obtain the little-o condition for the compactness of $T_{\mu,\omega}: L^{p}_{a}(\omega)\to L^{1}_{a}(\omega)$ as follows.

\begin{theorem}\label{RC}
Suppose that $\omega$ is a $\mathcal{D}$ weight and $\mu$ is a positive Borel measure on $\mathbb{D}$. For $0<p\leq1$, set $\sigma^p(z)=\widehat{\omega}(z)$.  Then the Toeplitz operator $T_{\mu,\omega}: L^{p}_{a}(\omega)\to L^{1}_{a}(\omega)$ is compact if and only if $\mu$ is a vanishing $\frac{1}{p}$-Carleson measure for $L^{1}_{a}(\omega)$ and there exists a positive constant $t_{0}=t_{0}(\omega)$ such that
\[
\lim_{|z|\to 1^-}\frac{(1-|z|^{2})^{s+1}}{\sigma(z)}\log\bigg(\frac{2}{1-|z|^{2}}\bigg)|(T_{\mu,t}1)'(z)|=0,
\]
for each $t\geq t_{0}$ and $s=t-\frac{1}{p}+2$.
\end{theorem}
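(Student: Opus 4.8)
The plan is to prove Theorem~\ref{RC} by running the argument of Theorem~\ref{T:regular} in parallel, replacing every uniform (big-$O$) estimate by its vanishing (little-$o$) counterpart. The organizing principle is the standard reduction to tail operators: for a positive Borel measure $\mu$, write $\mu_r=\mu\chi_{\{|z|\le r\}}$ and $\mu^r=\mu\chi_{\{r<|z|<1\}}$, so that $T_{\mu,\omega}=T_{\mu_r,\omega}+T_{\mu^r,\omega}$. Since $\mu_r$ is compactly supported and the kernel $K_z^\omega(\zeta)$ is bounded on $\{|z|\le r\}\times\mathbb{D}$ after the kernel comparison of Section~2, the operator $T_{\mu_r,\omega}$ is compact for each fixed $r<1$ (it sends bounded sequences converging to $0$ uniformly on compacta to norm-null sequences). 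Consequently $T_{\mu,\omega}$ is compact if and only if $\|T_{\mu^r,\omega}\|_{L^p_a(\omega)\to L^1_a(\omega)}\to 0$ as $r\to 1^-$, and the whole theorem reduces to showing that this tail norm is governed precisely by the two vanishing quantities in the statement.

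For sufficiency I would apply the quantitative form of Theorem~\ref{T:regular} to the truncated measures $\mu^r$. The boundedness proof produces an estimate of the type $\|T_{\nu,\omega}\|_{L^p_a(\omega)\to L^1_a(\omega)}\lesssim \|Id\|_{L^1_a(\omega)\to L^{1/p}(\nu)}+\sup_{z\in\mathbb{D}}\frac{(1-|z|^2)^{s+1}}{\sigma(z)}\log\!\big(\tfrac{2}{1-|z|^2}\big)\,|(T_{\nu,t}1)'(z)|$ for any admissible $\nu$; the key point is that, because $\mu^r$ is supported in the annulus $\{|z|>r\}$, both terms on the right, evaluated at $\nu=\mu^r$, are controlled by the behaviour of the corresponding quantities for $\mu$ over $\{|z|>r\}$. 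The vanishing $\tfrac1p$-Carleson hypothesis forces the embedding constant of $\mu^r$ to tend to $0$, and the little-$o$ hypothesis on $(T_{\mu,t}1)'$ forces the supremum over the annulus to tend to $0$; hence $\|T_{\mu^r,\omega}\|\to 0$ and $T_{\mu,\omega}$ is a norm limit of compact operators.

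For necessity I would start from the observation that compactness means $\|T_{\mu,\omega}f_n\|_{L^1(\omega)}\to 0$ for every sequence $\{f_n\}$ that is bounded in $L^p_a(\omega)$ and converges to $0$ uniformly on compact subsets of $\mathbb{D}$. Pairing through the reproducing property formally gives $\langle T_{\mu,\omega}f,g\rangle_\omega=\int_{\mathbb{D}} f\,\overline{g}\,d\mu$ for $g\in L^2_a(\omega)$, so testing against normalized ``bump'' functions $f_a$ subordinate to the Carleson boxes (or hyperbolic disks) centred at $a$ — which are uniformly bounded in $L^p_a(\omega)$ and tend to $0$ uniformly on compacta as $|a|\to 1$ — converts compactness into the vanishing $\tfrac1p$-Carleson condition, exactly as the same test functions yield its big-$O$ predecessor in the bounded case. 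The same normalized families, inserted into the identity relating $T_{\mu,\omega}$ to $T_{\mu,t}$ through the auxiliary weights, show that $\frac{(1-|a|^2)^{s+1}}{\sigma(a)}\log(\tfrac{2}{1-|a|^2})|(T_{\mu,t}1)'(a)|$ is dominated up to a constant by $\|T_{\mu,\omega}f_a\|_{L^1(\omega)}$; letting $|a|\to 1$ and invoking compactness yields the required limit.

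The main obstacle, as in the boundedness theorem, is that no explicit formula for $K_z^\omega$ is available, so every estimate must pass through the comparison with the standard kernel $(1-z\bar\zeta)^{-(\alpha+2)}$ and the auxiliary radial weights built in Section~2. For compactness this is sharpened: one must verify that the boundedness estimates are \emph{localizable}, i.e.\ that restricting $\mu$ to an annulus $\{|z|>r\}$ restricts the relevant Carleson and $\mathcal{LB}^s_\sigma$-type quantities to the same annulus, uniformly in $r$. Establishing this localization — together with the construction of normalized test functions $f_a$ that are simultaneously admissible in $L^p_a(\omega)$, concentrate near $a$, and interact cleanly with both the embedding and the $T1$-type term — is where the delicate $\mathcal{D}$-weight analysis is concentrated; once it is in place, the little-$o$ statements follow from their big-$O$ predecessors by the tail-operator argument above.
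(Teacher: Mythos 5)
Your architecture is genuinely different from the paper's, and the difference matters. You reduce compactness to norm-decay of the tail operators $T_{\mu^r,\omega}$ (truncating the \emph{measure}), whereas the paper never truncates $\mu$ at all: it works with the sequential criterion ($T_{\mu,\omega}$ is compact iff $\|T_{\mu,\omega}f_j\|_{L^1_a(\omega)}\to 0$ for bounded sequences converging to $0$ uniformly on compacta) and, inside each estimate, splits the \emph{region of integration} into a fixed disk $r\mathbb{D}$ (handled by the uniform convergence of the test sequence, resp. by a spare factor $(1-|z|^2)^{t-t_0+c}$) and the complementary annulus (handled by the vanishing hypotheses via the lattice argument). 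This choice is what lets the paper avoid exactly the step your proposal leaves open.

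That step is a genuine gap, and it is the heart of your sufficiency argument: you need the two vanishing quantities for $\mu$ to force the corresponding quantities for $\mu^r$ to tend to $0$ \emph{uniformly in $z\in\mathbb{D}$} as $r\to1^-$. For the Carleson part this is fine, since $\mu^r\le\mu$ and the condition of Lemma \ref{P:D1} involves the positive quantities $\mu(D(\cdot,\delta))$. But the $T1$-part is not monotone under truncation: $(T_{\mu^r,t}1)'(z)=(t+2)\int \bar u (1-z\bar u)^{-(t+3)}\,d\mu^r(u)$ has a complex kernel, so $|(T_{\mu^r,t}1)'|$ is \emph{not} dominated by $|(T_{\mu,t}1)'|$ — truncation can destroy cancellation. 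Writing $(T_{\mu^r,t}1)'=(T_{\mu,t}1)'-(T_{\mu_r,t}1)'$, you must show that $\sup_{|z|>\rho}\frac{(1-|z|^2)^{s+1}}{\sigma(z)}\log\big(\tfrac{2}{1-|z|^2}\big)|(T_{\mu_r,t}1)'(z)|$ is small, and the available tools fail: the exponent $t+3$ is exactly critical, so estimating $\int_{|u|\le r}|1-z\bar u|^{-(t+3)}d\mu(u)$ by the Carleson embedding (as in \eqref{PR08}, which only reaches exponent $t+2$ and there has a spare factor $1-|z|$ to absorb the logarithm) leaves an uncontrolled factor $\log\tfrac{2}{1-|z|^2}$, while the crude bound $|1-z\bar u|\ge 1-r$ produces $(1-r)^{-(t+3)}$, which blows up in the very limit $r\to 1^-$ you need. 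You explicitly defer this ("establishing this localization \dots is where the delicate analysis is concentrated"), but it is the entire mathematical content of the implication; nothing in Theorem \ref{T:regular}'s proof supplies it. A second, smaller gap is in your necessity sketch: the weighted quantity for $(T_{\mu,t}1)'$ is \emph{not} dominated by $\|T_{\mu,\omega}h_z\|_{L^1(\omega)}$ alone. The paper's identity leaves an oscillation term $\frac{(1-|z|^2)^{s+1}}{\sigma(z)}\int|g(z)-g(u)|\,|1-\bar uz|^{-(t+3)}d\mu(u)$, and proving that this is $o(1)\|g\|_{\mathcal{B}}$ as $|z|\to1^-$ requires first establishing the vanishing Carleson property and then re-running the lattice estimate (the paper's step \eqref{RCCCC}); your sketch omits this term, and also silently uses the unproved ``only if'' half of your tail-norm reduction, which compactness alone does not give.
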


\begin{remark} Note that the $T1$-type condition in Theorem \ref{T:regular} is not superfluous. For instance, let $d\mu=\omega dA$. Then $T_{\mu,\omega}$ is the Bergman projection $P_{\omega}$ on $L^2(\omega)$. In this case, $\mu$ is a 1-Carleson measure for $L_{a}^{1}(\omega)$. However, by \cite[Theorem 4]{PR1}, the projection $P_{\omega}$ from $L^{1}(\omega)$ to $L_{a}^{1}(\omega)$ is not bounded in general. This shows that the Carleson measure type condition itself is not enough to guarantee the boundedness of Toeplitz operators on $L_{a}^{1}(\omega)$.

\end{remark}

The full characterization of bounded Toeplitz operators with general symbols on $L^p_a(\omega)$ remains open (see \cite{TV2010}, \cite{YZ2019} and the references therein). Recall for $\varphi\in L^{1}(\omega)$, the \textit{Toeplitz operator} $T_{\varphi,\omega}$ defined on polynomials is
\[
T_{\varphi,\omega}(f)(z)=\int_{\mathbb{D}}f(\zeta)\overline{K_{z}^{\omega}(\zeta)}
\varphi(\zeta)\omega(\zeta)dA(\zeta).
\]
Zhu obtained a sufficient condition of bounded Toeplitz operators on $L^1_a(\mathbb{D})$ in \cite{zhu1989}. In the second part of this paper, we generalize Zhu's result to weighted Bergman spaces through the following ``bump condition".

Let $\omega(E)=\int_{E}\omega(z)dA(z)$, for any measurable set $E\subseteq\mathbb{D}$.
For $z\in\mathbb{D}$ and $r\in(0,1)$, denote by $B(z,r)=\{\zeta\in\mathbb{D}:|z-\zeta|<r\}$ the Euclidean disk and $D(z,r)=\{\zeta\in\mathbb{D}:\big|\frac{z-\zeta}{1-\bar{z}\zeta}\big|<r\}$ the pseudohyperbolic disk.

\begin{theorem} \label{T:bound10} Let $\omega$ be a $\mathcal{D}$ weight and $\varphi\in L^{1}(\omega)$. If there exist $\varepsilon>0$ and $r\in(0,1)$ such that
\[
\sup_{z\in\mathbb{D}}\frac{1}{\omega(D(z,r))^{1+\varepsilon}}
\int_{D(z,r)}|\varphi(\zeta)|\omega(\zeta)dA(\zeta)<+\infty,
\]
then the Toeplitz operator $T_{\varphi,\omega}$ is bounded on $L_{a}^{1}(\omega)$.
\end{theorem}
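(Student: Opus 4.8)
The plan is to exploit the factorization $T_{\varphi,\omega}=P_\omega M_\varphi$, where $P_\omega$ is the Bergman projection and $M_\varphi$ is multiplication by $\varphi$: for $f$ a polynomial,
\[
T_{\varphi,\omega}(f)(z)=\int_{\mathbb{D}}f(\zeta)\overline{K_z^\omega(\zeta)}\,\varphi(\zeta)\,\omega(\zeta)\,dA(\zeta)=P_\omega(\varphi f)(z).
\]
It therefore suffices to bound $\|P_\omega(\varphi f)\|_{L^1(\omega)}$ by $\|f\|_{L^1(\omega)}$ on the dense subspace of polynomials and extend by density. I would collapse the whole statement into a single Carleson-measure estimate: introduce the measure $d\nu_\varphi=|\varphi|\,\log\!\big(\tfrac{2}{1-|\cdot|^2}\big)\,\omega\,dA$ and show that the bump hypothesis forces $\nu_\varphi$ to be a $1$-Carleson measure for $L^1_a(\omega)$; the boundedness then follows at once from the definition of a Carleson measure.

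The first step absorbs the projection into a logarithmic weight. Taking absolute values inside the integral and using Tonelli,
\[
\|P_\omega(\varphi f)\|_{L^1(\omega)}\le\int_{\mathbb{D}}|f(\zeta)|\,|\varphi(\zeta)|\,\omega(\zeta)\Big(\int_{\mathbb{D}}|K_z^\omega(\zeta)|\,\omega(z)\,dA(z)\Big)\,dA(\zeta).
\]
Here I would invoke the kernel $L^1(\omega)$-estimate
\[
\int_{\mathbb{D}}|K_z^\omega(\zeta)|\,\omega(z)\,dA(z)\lesssim\log\Big(\frac{2}{1-|\zeta|^2}\Big),\qquad \zeta\in\mathbb{D},
\]
valid for $\omega\in\mathcal{D}$ (this is the quantitative form of the logarithmic failure of $P_\omega$ on $L^1(\omega)$ recorded in the Remark, and is also the source of the $\log$ factor in the $\mathcal{LB}^t_\omega$-norm). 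With it, $\|T_{\varphi,\omega}(f)\|_{L^1(\omega)}\lesssim\int_{\mathbb{D}}|f|\,d\nu_\varphi$, so everything is reduced to the Carleson property of $\nu_\varphi$.

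To verify that property I would use the characterization available for $\omega\in\mathcal{D}$: a positive measure $\mu$ is a $1$-Carleson measure for $L^1_a(\omega)$ if and only if $\sup_{z\in\mathbb{D}}\mu(D(z,r))/\omega(D(z,r))<\infty$. Since $\log(\tfrac{2}{1-|\zeta|^2})\asymp\log(\tfrac{2}{1-|z|^2})$ for $\zeta\in D(z,r)$, and using $\omega(D(z,r))\asymp\widehat{\omega}(z)(1-|z|)$, the bump hypothesis yields
\[
\frac{\nu_\varphi(D(z,r))}{\omega(D(z,r))}\lesssim\log\Big(\frac{2}{1-|z|^2}\Big)\,\frac{\int_{D(z,r)}|\varphi|\,\omega\,dA}{\omega(D(z,r))}\lesssim\log\Big(\frac{2}{1-|z|^2}\Big)\,\omega(D(z,r))^{\varepsilon}.
\]
The argument then closes with the elementary fact that $\log(\tfrac{2}{1-|z|^2})\,\big(\widehat{\omega}(z)(1-|z|)\big)^{\varepsilon}$ is bounded on $\mathbb{D}$: indeed $\widehat{\omega}$ is bounded by $\widehat{\omega}(0)$, and $t^{\varepsilon}\log(2/t)$ is bounded for $t\in(0,1]$, so the decaying power supplied by the exponent $1+\varepsilon$ in the bump condition exactly defeats the logarithmic growth coming from the projection.

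The hard part will be the kernel estimate of the second step: for a general $\mathcal{D}$ weight there is no closed form for $K_z^\omega$, so one cannot compute $\int_{\mathbb{D}}|K_z^\omega(\zeta)|\,\omega(z)\,dA(z)$ directly as in the standard-weight case, where it reduces to the classical integral $\int_{\mathbb{D}}(1-|z|^2)^{\alpha}|1-\bar\zeta z|^{-(\alpha+2)}\,dA(z)\asymp\log\frac{1}{1-|\zeta|^2}$. It must instead be derived from the two-sided pointwise bounds on $|K_z^\omega|$ for $\mathcal{D}$ weights together with the doubling of $\widehat{\omega}$, and this is precisely where the hypothesis $\omega\in\mathcal{D}$ (rather than merely radial) is used. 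Everything else — the factorization, Tonelli, and the final power-versus-logarithm comparison — is routine once the two imported ingredients (the kernel bound and the Carleson characterization) are in hand.
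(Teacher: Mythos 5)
Your proposal is correct and is essentially the paper's own argument: Tonelli plus the kernel estimate $\int_{\mathbb{D}}|K_z^{\omega}(\zeta)|\,\omega(z)\,dA(z)=\|K_{\zeta}^{\omega}\|_{L^{1}(\omega)}\lesssim\log\frac{2}{1-|\zeta|^{2}}$ (Proposition 2.2 with $p=1$) reduces boundedness to showing that $\log\big(\tfrac{2}{1-|\cdot|^{2}}\big)|\varphi|\,\omega\,dA$ is a $1$-Carleson measure for $L_{a}^{1}(\omega)$, which is then verified via the geometric characterization (Lemma 2.3), the bound $\omega(D(z,r))\lesssim\widehat{\omega}(z)(1-|z|)$, and the fact that the power $\varepsilon$ defeats the logarithm --- precisely the content of the paper's Proposition 4.1 and the deduction of Theorem 1.4 from it (your way of beating the log, via $\widehat{\omega}(z)\le\widehat{\omega}(0)$ and boundedness of $t^{\varepsilon}\log(2/t)$, is even a bit simpler than the paper's appeal to regularity of $\kappa$). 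One small inaccuracy: you assert the two-sided comparison $\omega(D(z,r))\asymp\widehat{\omega}(z)(1-|z|)$, but for a general $\mathcal{D}$ weight (which may vanish on thin annuli) only the upper bound $\omega(D(z,r))\lesssim\widehat{\omega}(z)(1-|z|)$ is guaranteed at a fixed small $r$; fortunately both places where you invoke the comparison --- passing from the denominator $\widehat{\omega}(z)(1-|z|)$ in the Carleson test to $\omega(D(z,r))$, and bounding $\omega(D(z,r))^{\varepsilon}$ --- use only this upper bound, which is exactly what the paper proves, so your argument stands as written.
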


In this paper, we will use $a\lesssim b$ to represent that there exists a constant $C=C(\cdot)>0$ satisfying $a\leq Cb$, where the constant $C(\cdot)$ depends on the parameters indicated in the parenthesis, varying under different circumstances. Moreover, if $a\lesssim b$ and $b\gtrsim a$, then we write $a\asymp b$.

\section{Boundedness of Toeplitz operators $T_{\mu,\omega}$ on $L_{a}^{p}(\omega)$}
Recall the \textit{Bloch space} $\mathcal{B}$ is the Banach space of analytic functions $f$ on $\mathbb{D}$ such that
\[
\|f\|_{\mathcal{B}}:=|f(0)|+\sup_{z\in\mathbb{D}}(1-|z|^2)|f'(z)|<+\infty.
\]
It is well known that the dual space of unweighed Bergman space $L_a^1(\mathbb{D})$ can be identified with the Bloch space. This result was generalized to the weighted Bergman spaces induced by $\mathcal{D}$ weights in \cite{PR19}.
\begin{lemma}\label{L:bloch}\rm\cite[Theorem 3]{PR19}
Let $\omega$ be a $\mathcal{D}$ weight. Then $(L_a^1(\omega))^{*}\simeq \mathcal{B}$ via the $L_a^2(\omega)$-pairing
\[
\langle f,g\rangle_{L^2(\omega)}=\lim_{r\to 1}\int_{\mathbb{D}}f_r(z)\overline{g(z)}\omega(z)dA(z),
\]
where $f_r(z)=f(rz)$ for $r\in (0,1)$.
\end{lemma}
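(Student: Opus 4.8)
The statement is a duality theorem, so the plan is to produce a concrete norm-equivalent linear bijection between $\mathcal{B}$ and $(L_a^1(\omega))^*$ realized by the given pairing. Concretely, I would establish three things: (i) every $g\in\mathcal{B}$ defines a bounded functional $\Lambda_g(f)=\langle f,g\rangle_{L^2(\omega)}$ on $L_a^1(\omega)$ with $\|\Lambda_g\|\lesssim\|g\|_{\mathcal{B}}$; (ii) every $\Lambda\in(L_a^1(\omega))^*$ equals $\Lambda_g$ for some $g\in\mathcal{B}$ with $\|g\|_{\mathcal{B}}\lesssim\|\Lambda\|$; and (iii) the correspondence $g\mapsto\Lambda_g$ is injective. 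Together these make $g\mapsto\Lambda_g$ a norm-equivalent linear isomorphism. The reason the pairing is written with the regularization $f_r$ is that an $f\in L_a^1(\omega)$ need only be area-integrable against $\omega$, while a Bloch function may grow like $\log\frac{2}{1-|z|^2}$, so $f\bar g\,\omega$ need not be absolutely integrable; replacing $f$ by the dilate $f_r$, which is bounded on $\mathbb{D}$, makes each integral $\int_{\mathbb{D}}f_r\bar g\,\omega\,dA$ legitimate, and the substance of (i) is precisely that these converge as $r\to1^-$.

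The analytic engine behind both inclusions is the mapping behaviour of the Bergman projection $P_\omega$ (the orthogonal projection of $L^2(\omega)$ onto $L_a^2(\omega)$, which is self-adjoint there and fixes $L_a^2(\omega)$): namely that $P_\omega$ maps $L^\infty$ boundedly into $\mathcal{B}$ and maps $L^\infty$ \emph{onto} $\mathcal{B}$ with $\|g\|_{\mathcal{B}}\asymp\inf\{\|h\|_\infty: P_\omega h=g\}$. This is the $L^\infty$--$\mathrm{BMO}$/Bloch estimate for $\mathcal{D}$ weights referenced in the introduction. Granting it, direction (i) is quick: given $g\in\mathcal{B}$ write $g=P_\omega h$ with $\|h\|_\infty\lesssim\|g\|_{\mathcal{B}}$; for a dilate $f_r$, which is bounded and hence in $L^2(\omega)$, self-adjointness gives $\langle f_r,g\rangle_{L^2(\omega)}=\langle P_\omega f_r,h\rangle_{L^2(\omega)}=\langle f_r,h\rangle_{L^2(\omega)}=\int_{\mathbb{D}}f_r\bar h\,\omega\,dA$, whence $|\langle f_r,g\rangle_{L^2(\omega)}|\le\|h\|_\infty\|f_r\|_{L^1(\omega)}\lesssim\|g\|_{\mathcal{B}}\|f\|_{L^1(\omega)}$, where $\|f_r\|_{L^1(\omega)}\le\|f\|_{L^1(\omega)}$ follows from the monotonicity of the integral means of $|f|$. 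This uniform bound, together with convergence on the dense subspace of polynomials (where the limit is a plain integral), shows that $\lim_{r\to1}\langle f_r,g\rangle_{L^2(\omega)}$ exists and defines $\Lambda_g$ with the stated norm control.

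For direction (ii) I would extend $\Lambda$ by Hahn--Banach to a functional on all of $L^1(\omega)$ of the same norm; since $\omega\,dA$ is a finite, hence $\sigma$-finite, measure, $(L^1(\omega))^*=L^\infty(\omega)$, so there is $\psi\in L^\infty$ with $\Lambda(f)=\int_{\mathbb{D}}f\bar\psi\,\omega\,dA$ for $f\in L_a^1(\omega)$ and $\|\psi\|_\infty=\|\Lambda\|$. Setting $g:=P_\omega\psi$, boundedness of $P_\omega:L^\infty\to\mathcal{B}$ gives $g\in\mathcal{B}$ with $\|g\|_{\mathcal{B}}\lesssim\|\psi\|_\infty=\|\Lambda\|$, while self-adjointness and $P_\omega f_r=f_r$ yield $\langle f_r,g\rangle_{L^2(\omega)}=\int_{\mathbb{D}}f_r\bar\psi\,\omega\,dA=\Lambda(f_r)$; letting $r\to1$ gives $\Lambda=\Lambda_g$. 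Injectivity (iii) is elementary: testing $\Lambda_g$ against the monomials $f=z^n$ recovers $\overline{b_n}$ times the positive moment $2\int_0^1 s^{2n+1}\omega(s)\,ds$, where $b_n$ is the $n$-th Taylor coefficient of $g$, so $\Lambda_g=0$ forces every $b_n=0$ and hence $g=0$. Combining (i)--(iii) with the open mapping theorem yields the stated isomorphism $(L_a^1(\omega))^*\simeq\mathcal{B}$.

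I expect the main obstacle to be the two mapping properties of $P_\omega$ invoked above, and especially the \emph{surjectivity} $\mathcal{B}=P_\omega(L^\infty)$ with comparable norms, which requires constructing, for each $g\in\mathcal{B}$, a genuinely bounded symbol $h$ with $P_\omega h=g$. Because a $\mathcal{D}$ weight admits no closed-form reproducing kernel, this rests on sharp two-sided pointwise estimates for $K_z^\omega(\zeta)$ and its derivative $\partial_z K_z^\omega(\zeta)$ driven by the forward and reverse doubling conditions \eqref{Definition of D}, which control the moments $\int_0^1 s^{2n+1}\omega(s)\,ds$ and thereby the kernel. These kernel bounds, together with the verification that the candidate preimage $h$ is bounded, are the technical heart; the duality argument itself is then formal. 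Should one wish to avoid surjectivity in direction (i), an alternative is a direct estimate of $\langle f_r,g\rangle_{L^2(\omega)}$ via the Littlewood--Paley identity for $\mathcal{D}$ weights, bounding the resulting integral of $|f'|$ against a companion weight by $\|f\|_{L^1(\omega)}$; this trades the kernel estimates for the Littlewood--Paley machinery but leaves the overall structure unchanged.
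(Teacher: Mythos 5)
The paper offers no proof of this lemma for you to be compared against: it is imported verbatim from \cite[Theorem 3]{PR19}. Measured against the proof in that reference, your outline is essentially the same argument. Pel\'aez and R\"atty\"a likewise deduce the duality from the mapping properties of the Bergman projection --- $P_\omega\colon L^\infty\to\mathcal{B}$ bounded and \emph{onto}, with $\|g\|_{\mathcal{B}}\asymp\inf\{\|h\|_\infty\colon P_\omega h=g\}$, valid exactly for $\omega\in\mathcal{D}$ --- combined with a Hahn--Banach extension, the identification $(L^1(\omega))^*=L^\infty$, self-adjointness of $P_\omega$, and dilations to make the pairing converge. Your diagnosis of why the regularization $f_r$ is needed is also correct: for $\omega\in\mathcal{D}$ one does have $\mathcal{B}\subset L^1(\omega)$, since the reverse doubling inequality forces $\widehat{\omega}(r)\lesssim(1-r)^{\alpha}$ for some $\alpha>0$ and hence $\int_{\mathbb{D}}\log\frac{2}{1-|z|}\,\omega(z)\,dA(z)<\infty$, but $f\bar{g}\,\omega$ itself need not be integrable for $f\in L^1_a(\omega)$.

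Two caveats. First, as you acknowledge yourself, what you have is a reduction rather than a proof: the boundedness and above all the surjectivity of $P_\omega\colon L^\infty\to\mathcal{B}$ with norm comparability are main theorems of the very same reference \cite{PR19}, established there through delicate two-sided estimates on $K_z^\omega$ and its derivative of the kind recorded in Proposition~\ref{lla45}; this is where both halves of the $\mathcal{D}$ condition enter, and nothing in your sketch replaces it. Deriving \cite[Theorem 3]{PR19} from the projection theorems of \cite{PR19} is faithful to how the result is actually proved, but it is not self-contained. Second, a small gap that is easy to fill and would also streamline your step (i): in step (ii) you need $\Lambda(f_r)\to\Lambda(f)$, i.e.\ $f_r\to f$ in $L^1_a(\omega)$-norm, which you never establish. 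It holds for every radial weight: writing $M_1(s,f)=\frac{1}{2\pi}\int_0^{2\pi}|f(se^{i\theta})|\,d\theta$, one has $M_1(s,f_r-f)\le 2M_1(s,f)$ by monotonicity of integral means and $M_1(s,f_r-f)\to 0$ for each fixed $s<1$ by uniform convergence on compacta, so dominated convergence in $\int_0^1 M_1(s,\cdot)\,s\,\omega(s)\,ds$ concludes. With this norm convergence in hand, your uniform-boundedness-plus-density argument in (i) is subsumed, since $\langle f_r,g\rangle_{L^2(\omega)}=\int_{\mathbb{D}}f_r\bar{h}\,\omega\,dA\to\int_{\mathbb{D}}f\bar{h}\,\omega\,dA$ directly for $h\in L^\infty$; in the injectivity step the term-by-term integration against monomials should still be justified (uniform convergence of the Taylor series of $g$ on each circle $|z|=s$ suffices). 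Modulo the black box borrowed from \cite{PR19}, the argument is correct.
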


From \cite[Theorem 3]{PR19}, we also know that $\mathcal{D}$ weights is the largest class of radial weights to make the dual relation $(L_a^1(\omega))^{*}\simeq \mathcal{B}$ holds.

We say a radial weight $\omega\in\widehat{\mathcal{D}}$ \cite{PR00} if there is a constant $C$ such that
\[
\widehat{\omega}(r)\leq C\widehat{\omega}\bigg(\frac{1+r}{2}\bigg),\quad \forall r\in[0,1).
\]

Next, we recall some useful estimates of reproducing kernels associated with $\widehat{\mathcal{D}}$ weights
(see \cite[Theorem C, Lemma 6 and Lemma 8]{PRS}) and geometrical characterizations for Carleson measures of the weighted Bergman spaces induced by $\mathcal{D}$ weights (see \cite[Theorem 3.3]{PR00}).
\begin{proposition}\label{lla45} Let $\omega\in\widehat{\mathcal{D}}$ and $0<p<\infty$.
Then
\[
\|K_{z}^{\omega}\|_{L_{a}^{p}(\omega)}^{p}\asymp\int_{0}^{|z|}\frac{1}{\widehat{\omega}(t)^{p-1}(1-t)^{p}}dt,\quad |z|\to1^{-}
\]
and
\[
\|K_{z}^{\omega}\|_{\mathcal{B}}\asymp\frac{1}{\widehat{\omega}(z)(1-|z|)},\quad z\in\mathbb{D}.
\]
Moreover, there exists $r\in(0,1)$ (r only depends on $\omega$) such that
\[
|K_{z}^{\omega}(\zeta)|\asymp K_{z}^{\omega}(z),\quad \textmd{for~all}~z\in\mathbb{D}~\textmd{and}~\zeta\in D(z,r).
\]
\end{proposition}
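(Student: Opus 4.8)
The plan is to funnel all three estimates through the one-variable generating function of the kernel and the asymptotics of the moments of $\omega$. Since $\omega$ is radial, $\{\zeta^n\}$ is orthogonal in $L_a^2(\omega)$ with $\|\zeta^n\|_{L^2(\omega)}^2=2\omega_{2n+1}$, where $\omega_x=\int_0^1 r^x\omega(r)\,dr$, so that
\[
K_z^\omega(\zeta)=\sum_{n=0}^\infty\frac{(\bar z\zeta)^n}{2\omega_{2n+1}}=g(\bar z\zeta),\qquad g(w):=\sum_{n=0}^\infty\frac{w^n}{2\omega_{2n+1}},
\]
a holomorphic function on $\mathbb{D}$ with positive, nondecreasing Taylor coefficients. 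The first ingredient is the moment asymptotic $\omega_{2n+1}\asymp\widehat{\omega}(1-1/n)$ for $\omega\in\widehat{\mathcal{D}}$; this is exactly where the doubling hypothesis $\widehat{\omega}(r)\le C\widehat{\omega}(\tfrac{1+r}{2})$ is used, since it makes $\widehat{\omega}(1-1/n)$ decay roughly geometrically along dyadic scales and lets me control the tails of the relevant series. Feeding this into the diagonal series $K_z^\omega(z)=g(|z|^2)=\sum_n|z|^{2n}/(2\omega_{2n+1})$, splitting the sum at the index $n\asymp(1-|z|)^{-1}$, and summing the dyadic blocks (whose contributions are comparable by $\widehat{\mathcal{D}}$ doubling) gives the cornerstone diagonal estimate
\[
K_z^\omega(z)\asymp\frac{1}{\widehat{\omega}(z)(1-|z|)},\qquad\text{equivalently}\qquad g(x)\asymp\frac{1}{\widehat{\omega}(x)(1-x)}\ \ (0<x<1),
\]
the extra factor $(1-|z|)^{-1}$ arising from the number of terms surviving the cutoff.

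I would next obtain the Bloch estimate by differentiating. Because $g$ has positive nondecreasing coefficients, the diagonal estimate upgrades to $g'(x)\asymp(\widehat{\omega}(x)(1-x)^2)^{-1}$ by an elementary Tauberian comparison of $g'$ with $g/(1-x)$. Since $\partial_\zeta K_z^\omega(\zeta)=\bar z\,g'(\bar z\zeta)$, bounding $|\partial_\zeta K_z^\omega(\zeta)|\le|z|\,g'(|z||\zeta|)$ and maximising $(1-|\zeta|^2)|z|\,g'(|z||\zeta|)$ over $|\zeta|$ (the extremal radius being $|\zeta|\asymp|z|$) yields the upper bound, while evaluating the Bloch seminorm expression at $\zeta=z$, where $\bar z\zeta=|z|^2$ is real so that no cancellation occurs, gives $(1-|z|^2)|z|\,g'(|z|^2)\asymp(\widehat{\omega}(z)(1-|z|))^{-1}$ as a matching lower bound; hence $\|K_z^\omega\|_{\mathcal{B}}\asymp(\widehat{\omega}(z)(1-|z|))^{-1}$. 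A structurally cleaner alternative uses Lemma~\ref{L:bloch}: the duality $(L_a^1(\omega))^{*}\simeq\mathcal{B}$ identifies $\|K_z^\omega\|_{\mathcal{B}}$ with the norm of the evaluation functional $f\mapsto f(z)$ on $L_a^1(\omega)$, and the subharmonic mean-value bound $|f(z)|\lesssim\omega(D(z,r))^{-1}\|f\|_{L^1(\omega)}$ together with $\omega(D(z,r))\asymp\widehat{\omega}(z)(1-|z|)$ evaluates that norm to the same quantity. With the Bloch estimate in hand the local estimate is immediate: writing $K_z^\omega(\zeta)-K_z^\omega(z)=\int_{[z,\zeta]}\partial_w K_z^\omega(w)\,dw$ along the segment in $D(z,r)$, using $|\partial_w K_z^\omega(w)|\le\|K_z^\omega\|_{\mathcal{B}}/(1-|w|^2)$ and $1-|w|^2\asymp1-|z|^2$ there, gives $|K_z^\omega(\zeta)-K_z^\omega(z)|\lesssim r\,\|K_z^\omega\|_{\mathcal{B}}\asymp r\,K_z^\omega(z)$, so choosing $r$ small (depending only on the $\widehat{\mathcal{D}}$ constants) forces $|K_z^\omega(\zeta)|\asymp K_z^\omega(z)$ on $D(z,r)$.

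The $L_a^p(\omega)$-norm estimate is the genuinely delicate one, and I expect it to be the main obstacle. Passing to polar coordinates,
\[
\|K_z^\omega\|_{L^p(\omega)}^p=2\int_0^1\Big(\int_0^{2\pi}|K_z^\omega(\rho e^{i\theta})|^p\,\frac{d\theta}{2\pi}\Big)\rho\,\omega(\rho)\,d\rho,
\]
so everything reduces to the $p$-th circular means of the kernel. The difficulty is that for $0<p\le1$ the mass of $|K_z^\omega|^p$ is no longer concentrated near the diagonal, so bounding $|K_z^\omega(\rho e^{i\theta})|$ by its value on the positive axis loses the angular-averaging gain of a factor $(1-\rho|z|)$ and is too crude. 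The sharp input I need is an off-diagonal pointwise bound of Forelli--Rudin type, of the form recorded in \cite[Theorem C]{PRS},
\[
|K_z^\omega(\zeta)|\lesssim\frac{1}{\widehat{\omega}\big(1-|1-\bar z\zeta|\big)\,|1-\bar z\zeta|},
\]
which decays genuinely in $|1-\bar z\zeta|$ and, after integration in $\theta$, produces the correct circular mean $\asymp(\widehat{\omega}(\rho|z|)^p(1-\rho|z|)^{p-1})^{-1}$ across the full range of $p$; the matching lower bound for that mean comes from the local estimate applied along each circle, on which $|K_z^\omega|$ is comparable to its diagonal value on an arc of angular width $\asymp(1-\rho|z|)$. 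Substituting and carrying out the change of variables $t=\rho|z|$, using $\omega(\rho)\,d\rho=-d\widehat{\omega}(\rho)$ and an integration by parts (invoking $\widehat{\mathcal{D}}$ doubling to replace $\widehat{\omega}(\rho|z|)$ by $\widehat{\omega}(\rho)$), collapses the double integral into $\int_0^{|z|}\widehat{\omega}(t)^{1-p}(1-t)^{-p}\,dt$, which is the asserted right-hand side. The step I expect to demand the most care is establishing the off-diagonal bound from the moment asymptotics uniformly down to small $p$, precisely the regime in which the peak at the diagonal contributes negligibly and the entire range of scales $0<t<|z|$ must be accounted for; once that bound and the diagonal, Bloch, and local estimates are assembled, the three displayed relations follow.
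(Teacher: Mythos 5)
Your proposal founders on the $L^p$-norm estimate, which is the genuinely hard display. You hang the whole argument on the pointwise off-diagonal bound $|K_z^\omega(\zeta)|\lesssim\big(\widehat{\omega}(1-|1-\bar z\zeta|)\,|1-\bar z\zeta|\big)^{-1}$, which you claim is ``of the form recorded in \cite[Theorem C]{PRS}''. That is a misattribution: Theorem~C of \cite{PRS} records precisely the norm asymptotics you are trying to prove (the $L^p$ and Bloch estimates), not any pointwise estimate. No Forelli--Rudin-type pointwise bound is known for general $\omega\in\widehat{\mathcal{D}}$ --- the unavailability of pointwise kernel information is exactly the difficulty this paper flags in its introduction --- and there is reason to doubt it: a single Abel summation of $\sum_n(2\omega_{2n+1})^{-1}w^n$, using only monotonicity and doubling of the moments, yields $|K_z^\omega(\zeta)|\lesssim\big(\widehat{\omega}(|z\zeta|)\,|1-\bar z\zeta|\big)^{-1}$, with $\widehat{\omega}$ evaluated at $|z\zeta|$ rather than at $1-|1-\bar z\zeta|$; upgrading this to your bound would need second-difference control of the coefficients that doubling alone does not give (consider moments constant on dyadic blocks). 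You yourself flag this step as unestablished, so the first display is simply not proved. Your intermediate claim is also quantitatively false: the circular mean is \emph{not} $\asymp\widehat{\omega}(\rho|z|)^{-p}(1-\rho|z|)^{1-p}$ ``across the full range of $p$'' --- already for the standard weight with $(2+\alpha)p<1$ the true mean stays $\asymp 1$ while your expression tends to $0$ --- and your lower bound for the means leans on the local comparison on $D(z,r)$, which only meets circles $|\zeta|=\rho$ with $\rho$ pseudohyperbolically close to $|z|$. The proof in the literature (Pel\'aez--R\"atty\"a, as quoted by \cite{PRS}; note the present paper gives no proof, it simply cites \cite[Theorem C, Lemmas 6 and 8]{PRS}) avoids pointwise bounds entirely: since $K_z^\omega$ has, up to rotation, positive Taylor coefficients $|z|^n/(2\omega_{2n+1})$, the means $M_p(\rho,K_z^\omega)$ are evaluated through dyadic coefficient blocks (a Mateljevi\'c--Pavlovi\'c-type decomposition) together with $\omega_{2n+1}\asymp\widehat{\omega}(1-1/n)$, and the integration in $\rho$ then produces $\int_0^{|z|}\widehat{\omega}(t)^{1-p}(1-t)^{-p}\,dt$. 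Without that mechanism, or a proved substitute for your pointwise bound, the first display stands unproven.

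The other two displays are in much better shape and essentially follow the cited source's route: the moment asymptotics and block summation give the diagonal estimate $K_z^\omega(z)\asymp(\widehat{\omega}(z)(1-|z|))^{-1}$, and your segment-integration argument, with $1-|w|\asymp 1-|z|$ on the (convex, Euclidean) disk $D(z,r)$ and $|K_z^\omega(\zeta)-K_z^\omega(z)|\lesssim r\|K_z^\omega\|_{\mathcal{B}}\asymp r\,K_z^\omega(z)$, correctly yields the local comparison for small $r$. Two caveats, though. Your Tauberian step $g'(x)\asymp g(x)/(1-x)$ does not follow from positivity and monotonicity of the coefficients alone (coefficientwise $na_n\lesssim\sum_{k\le n}a_k$ can fail badly for nondecreasing $a_n$); it requires the coefficient doubling $a_{2n}\lesssim a_n$, i.e.\ $\widehat{\omega}(1-1/n)\lesssim C\,\widehat{\omega}(1-1/(2n))$, which holds here but must be invoked. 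And your ``structurally cleaner alternative'' via Lemma~\ref{L:bloch} is unavailable at the stated generality: the duality $(L_a^1(\omega))^*\simeq\mathcal{B}$ requires $\omega\in\mathcal{D}$, whereas the proposition is asserted for all $\omega\in\widehat{\mathcal{D}}$, so only your direct differentiation route is legitimate there.
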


\vskip 0.2in

\begin{lemma}\label{P:D} Let $\omega$ be a $\mathcal{D}$ weight and $0<p\leq q<\infty$. Then a positive Borel measure $\mu$ is a $q$-Carleson measure for $L_{a}^{p}(\omega)$ if and only if
\[
\sup_{z\in\mathbb{D}}\frac{\mu(D(z,r))}{\widehat{\omega}(z)^{\frac{q}{p}}(1-|z|)^{\frac{q}{p}}}<\infty, \quad \textmd{for~some}~r\in(0,1).
\]
\end{lemma}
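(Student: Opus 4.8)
The plan is to establish the two implications separately. Two geometric facts about $\mathcal D$ weights will be used throughout: first, for every fixed $r\in(0,1)$ one has $\omega(D(a,r))\asymp\widehat{\omega}(a)(1-|a|)$, where the upper bound is elementary and the lower bound is exactly where the left-hand (reverse doubling) inequality in \eqref{Definition of D} enters, i.e. the part distinguishing $\mathcal D$ from $\widehat{\mathcal D}$; this is why the statement requires $\omega\in\mathcal D$ and not merely $\omega\in\widehat{\mathcal D}$. Second, both $\widehat{\omega}(z)$ and $1-|z|$ are comparable at any two points of a common pseudohyperbolic disk, so $\omega(D(\cdot,r))$ is essentially constant on pseudohyperbolic disks. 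Points with $|z|$ bounded away from $1$ are harmless, since there every quantity in sight is comparable to a positive constant, so I will tacitly reduce all estimates to the regime $|z|\to1^{-}$.

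For the necessity direction I would test the embedding $L^p_a(\omega)\hookrightarrow L^q(\mu)$ against the explicit functions
\[
f_a(z)=\left(\frac{1-|a|}{1-\bar a z}\right)^{\gamma},\qquad a\in\mathbb D,
\]
with $\gamma=\gamma(p,\omega)$ chosen large. On $D(a,r)$ one has $|1-\bar a z|\asymp1-|a|$, hence $|f_a|\asymp1$ there, while the standard $\widehat{\mathcal D}$ integral estimate $\int_{\mathbb D}\frac{\omega(z)\,dA(z)}{|1-\bar a z|^{c}}\asymp\frac{\widehat{\omega}(a)}{(1-|a|)^{c-1}}$ (valid for $c$ large) gives $\|f_a\|_{L^p(\omega)}^p\asymp\widehat{\omega}(a)(1-|a|)\asymp\omega(D(a,r))$. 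Feeding $f_a$ into the Carleson inequality, $\mu(D(a,r))\asymp\int_{D(a,r)}|f_a|^q\,d\mu\leq\|f_a\|_{L^q(\mu)}^q\lesssim\|f_a\|_{L^p(\omega)}^q\asymp\omega(D(a,r))^{q/p}\asymp(\widehat{\omega}(a)(1-|a|))^{q/p}$, which is the desired bound (and, being valid for every $r$, yields the supremum condition for any chosen radius). I would deliberately avoid testing with the reproducing kernel $K_a^\omega$ itself: for $p\leq1$ the quantity $\|K_a^\omega\|_{L^p(\omega)}^p$ in Proposition \ref{lla45} is governed by the tail of the kernel rather than by its values on $D(a,r)$, so the kernel is not sufficiently localized to produce the correct power $q/p$.

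For the sufficiency direction I would pass to a lattice: choose $\{a_j\}$ so that $\{D(a_j,r)\}$ covers $\mathbb D$ while the dilated disks $\{D(a_j,R)\}$, for a fixed $R\in(r,1)$, have uniformly bounded overlap. The analytic input is the weighted sub-mean-value estimate $|f(\zeta)|^p\lesssim\omega(D(\zeta,s))^{-1}\int_{D(\zeta,s)}|f|^p\omega\,dA$ valid for $\widehat{\mathcal D}$ weights; applied on $D(a_j,r)$ together with the comparabilities of the first paragraph it yields $\sup_{D(a_j,r)}|f|^q\lesssim\big(\omega(D(a_j,R))^{-1}\int_{D(a_j,R)}|f|^p\omega\,dA\big)^{q/p}$. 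Multiplying by the hypothesis $\mu(D(a_j,r))\lesssim\omega(D(a_j,r))^{q/p}\asymp\omega(D(a_j,R))^{q/p}$ and summing over $j$, the factors $\omega(D(a_j,R))$ cancel and leave $\int_{\mathbb D}|f|^q\,d\mu\lesssim\sum_j\big(\int_{D(a_j,R)}|f|^p\omega\,dA\big)^{q/p}$. Because $q/p\geq1$, the elementary superadditivity $\sum_j c_j^{q/p}\leq(\sum_j c_j)^{q/p}$ applies, and the bounded overlap of $\{D(a_j,R)\}$ turns $\sum_j\int_{D(a_j,R)}|f|^p\omega\,dA$ into a constant multiple of $\|f\|_{L^p(\omega)}^p$; this gives $\|f\|_{L^q(\mu)}\lesssim\|f\|_{L^p(\omega)}$, as required.

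I expect the weighted sub-mean-value estimate underlying the sufficiency direction to be the main obstacle. For a regular weight $\omega$ is pointwise comparable to $\widehat{\omega}(z)/(1-|z|)$, and one could freeze $\omega$ to this constant on each disk and reduce to ordinary subharmonicity; but a general $\mathcal D$ weight may oscillate severely, and indeed the pointwise inequality $\widehat{\omega}(z)/(1-|z|)\lesssim\omega(z)$ fails, so $\omega$ cannot be frozen on $D(\zeta,s)$. The estimate must instead be wrung out of the doubling of $\widehat{\omega}$, crucially exploiting the analyticity of $f$ (for non-analytic $f$ it is false), and this is precisely where membership in $\mathcal D$ rather than mere regularity is used. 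A secondary, routine point is to record that the geometric condition for one radius self-improves to all radii, so that the ``for some $r$'' in the statement is unambiguous.
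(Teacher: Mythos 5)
The paper itself does not prove Lemma~\ref{P:D}: it quotes it from \cite[Theorem 3.3]{PR00}, so your proposal must be judged against that reference and against the auxiliary machinery the paper builds (Lemma~\ref{prop-reverse}, Proposition~\ref{la45}). Your necessity half is correct and is the standard argument: the test functions $f_a$, the $\widehat{\mathcal{D}}$-estimate of Remark~\ref{rema-doubling}, and $|f_a|\asymp 1$ on $D(a,r)$ give $\mu(D(a,r))\lesssim(\widehat{\omega}(a)(1-|a|))^{q/p}$ for every $r$, and this needs neither reverse doubling nor the comparability $\omega(D(a,r))\asymp\widehat{\omega}(a)(1-|a|)$ through which you route it. That comparability is in fact false as you state it (``for every fixed $r$''): a $\mathcal{D}$ weight may vanish identically on the annuli $\{1-\tfrac12 4^{-k}<|z|<1-\tfrac14 4^{-k}\}$ (take $\omega\equiv 2$ on the complementary annuli, so that $\widehat{\omega}(r)\asymp 1-r$ and both conditions in \eqref{Definition of D} hold), and then $\omega(D(a,r))=0$ for small $r$ and suitable $a$; the lower bound holds only for $r$ large relative to the doubling constants. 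In the necessity half this is harmless, since the chain can skip the middle term.

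The genuine gap is in the sufficiency half, at exactly the step you flag as ``the main obstacle'' and then assert can be wrung out of doubling plus analyticity: the local sub-mean-value inequality $|f(\zeta)|^{p}\lesssim\omega(D(\zeta,s))^{-1}\int_{D(\zeta,s)}|f|^{p}\omega\,dA$ is \emph{false} for general $\mathcal{D}$ weights, for every fixed $s$, so no proof of it can exist. For the weight above, let $\zeta$ lie in a zero annulus; then $K_{0}=\mathrm{supp}\,\omega\cap\overline{D(\zeta,s)}$ is a finite union of disjoint compact strips, each with connected complement, so $K_{0}\cup\{\zeta\}$ is a compact set with connected complement and Runge's theorem produces polynomials $P$ with $|P|\leq\delta$ on $K_{0}$ and $|P(\zeta)|\geq 1-\delta$. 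Hence $\int_{D(\zeta,s)}|P|^{p}\omega\,dA\leq\delta^{p}\,\omega(D(\zeta,s))$ while $|P(\zeta)|^{p}\geq 2^{-p}$, and letting $\delta\to 0$ destroys the inequality (with either $\omega(D(\zeta,s))$ or $\widehat{\omega}(\zeta)(1-|\zeta|)$ in the denominator). Analyticity rescues only the \emph{global} estimate \eqref{pro-sub}, where the forced blow-up of $P$ off $\mathrm{supp}\,\omega$ is eventually detected by the weight elsewhere in $\mathbb{D}$. The repair is precisely the regularization you dismissed: by Lemma~\ref{prop-reverse}, $\kappa(z)=\widehat{\omega}(z)/(1-|z|)$ is regular and $\|f\|_{L^{p}(\omega)}\asymp\|f\|_{L^{p}(\kappa)}$ for analytic $f$ --- a norm equivalence on analytic functions, which is not the (indeed false) pointwise bound $\widehat{\omega}(z)/(1-|z|)\lesssim\omega(z)$ you conflated it with --- and moreover $\widehat{\kappa}\asymp\widehat{\omega}$, which uses both doubling conditions and is where membership in $\mathcal{D}$ genuinely enters. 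Replacing $\omega$ by $\kappa$ throughout, your lattice-plus-superadditivity argument runs verbatim, because for the regular weight $\kappa$ the sub-mean-value estimate on pseudohyperbolic disks follows at once from Proposition~\ref{la45} and subharmonicity of $|f|^{p}$; this is exactly how the paper argues in \eqref{pro-sub}, and, modulo presentation, how the cited source handles sufficiency.
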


By the above $L^1_{a}$-$\mathcal{B}$ dual relations, estimates of reproducing kernels and geometrical conditions of Carleson measures, we obtain a part of Theorem \ref{T:regular}.
\begin{proposition}\label{P:regular} Let $\mu$ be a positive Borel measure, $\omega$ be a $\mathcal{D}$ weight and $0<p\leq1$. If
$T_{\mu,\omega}:L_{a}^{p}(\omega)\rightarrow L_{a}^{1}(\omega)$
is bounded, then $\mu$ is a $\frac{1}{p}$-Carleson measure for $L_{a}^{1}(\omega)$.
\end{proposition}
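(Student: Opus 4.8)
The plan is to turn the operator bound into a bilinear inequality via the Bloch duality of Lemma~\ref{L:bloch}, and then to feed in a carefully matched pair of test functions concentrated near a boundary point. First I would record the pairing identity. Writing $\langle\cdot,\cdot\rangle_{L^2(\omega)}$ for the pairing of Lemma~\ref{L:bloch}, an application of Fubini's theorem together with the reproducing property and the Hermitian symmetry $K^{\omega}_z(\zeta)=\overline{K^{\omega}_\zeta(z)}$ gives, for analytic $f,g$ for which both sides are meaningful,
\[
\langle T_{\mu,\omega}f,g\rangle_{L^2(\omega)}=\int_{\mathbb{D}}f(\zeta)\overline{g(\zeta)}\,d\mu(\zeta).
\]
Since $T_{\mu,\omega}\colon L^{p}_a(\omega)\to L^1_a(\omega)$ is bounded, Lemma~\ref{L:bloch} then yields
\[
\Big|\int_{\mathbb{D}}f\overline{g}\,d\mu\Big|=\big|\langle T_{\mu,\omega}f,g\rangle_{L^2(\omega)}\big|\lesssim\|T_{\mu,\omega}f\|_{L^1(\omega)}\|g\|_{\mathcal B}\lesssim\|f\|_{L^p(\omega)}\|g\|_{\mathcal B}
\]
for all $f\in L^p_a(\omega)$ and $g\in\mathcal B$. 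The only delicate point here is the interchange of integrals, which I would justify by first working with the regularizations $f_r$ from the definition of the pairing and with $\mu$ finite; the test functions used below lie in $H^\infty(\mathbb{D})$, so all integrals converge absolutely.

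Next I would test this inequality at a fixed $a\in\mathbb{D}$. For a large exponent $\gamma$ (fixed depending only on $p$ and $\omega$) set
\[
f_a(z)=\frac{(1-|a|)^{\gamma-1/p}}{\widehat{\omega}(a)^{1/p}}\,\frac{1}{(1-\bar a z)^{\gamma}},\qquad g_a(z)=\Big(\frac{1-|a|}{1-\bar a z}\Big)^{\gamma}.
\]
The matching exponents are the crucial feature: since conjugation fixes the right half-plane branch of the power, $f_a\overline{g_a}=(1-|a|)^{2\gamma-1/p}\widehat{\omega}(a)^{-1/p}\,|1-\bar a z|^{-2\gamma}\ge0$ throughout $\mathbb{D}$, which removes any cancellation in the $\mu$-integral. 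Using the standard asymptotic $\int_{\mathbb{D}}\omega(z)|1-\bar a z|^{-s}\,dA(z)\asymp\widehat{\omega}(a)(1-|a|)^{1-s}$ for $s$ large (valid for $\mathcal D$ weights) one computes $\|f_a\|_{L^p(\omega)}\asymp1$, while an elementary estimate of the Bloch seminorm gives $\|g_a\|_{\mathcal B}\lesssim1$. Moreover, on the pseudohyperbolic disk $D(a,r)$ one has $|1-\bar a z|\asymp1-|a|$, so $f_a\overline{g_a}\asymp\widehat{\omega}(a)^{-1/p}(1-|a|)^{-1/p}$ there.

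Finally I would combine the pieces. Because $f_a\overline{g_a}\ge0$,
\[
\frac{\mu(D(a,r))}{\widehat{\omega}(a)^{1/p}(1-|a|)^{1/p}}\lesssim\int_{D(a,r)}f_a\overline{g_a}\,d\mu\le\int_{\mathbb{D}}f_a\overline{g_a}\,d\mu\lesssim\|f_a\|_{L^p(\omega)}\|g_a\|_{\mathcal B}\lesssim1,
\]
with implied constants independent of $a$. Taking the supremum over $a$ and invoking the geometric characterization of Lemma~\ref{P:D} (applied with space exponent $1$ and Carleson exponent $1/p$, which is admissible since $1\le1/p$) shows that $\mu$ is a $\tfrac1p$-Carleson measure for $L^1_a(\omega)$, as claimed.

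I expect the main obstacle to be the \emph{choice} of test functions rather than any single estimate. The naive choice $f=g=K^{\omega}_a$ does produce $|K^{\omega}_a|^2$ on $D(a,r)$, so positivity is automatic, but it forces the factor $\|K^{\omega}_a\|_{L^1(\omega)}$ into the bound, and for $\omega\in\mathcal D$ this carries an unwanted logarithmic term at the endpoint $p=1$, so it misses the sharp Carleson exponent there. Splitting the concentration between an $L^p(\omega)$-normalized factor $f_a$ and a Bloch-normalized factor $g_a$, with the exponents matched so that the product stays nonnegative, is exactly what makes the argument work uniformly across the whole range $0<p\le1$; the remaining ingredients (the kernel symmetry for the pairing identity and the weighted integral asymptotics) are standard for $\mathcal D$ weights.
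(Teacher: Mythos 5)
Your proof is correct, and structurally it mirrors the paper's: both arguments convert boundedness into the bilinear inequality $|\int_{\mathbb{D}} f\bar{g}\,d\mu|\lesssim \|f\|_{L^{p}(\omega)}\|g\|_{\mathcal{B}}$ via the duality of Lemma~\ref{L:bloch} and the pairing identity, test it on a pair of functions whose product is nonnegative and concentrated on a pseudohyperbolic disk, and conclude with Lemma~\ref{P:D}. The genuine difference is the choice of test pair and, with it, the supporting estimates. The paper takes normalized powers of the intrinsic kernel,
\[
f_{z}=\frac{(K_{z}^{\omega})^{\alpha}}{\|K_{z}^{\omega}\|^{\beta}_{L^{\beta}(\omega)}},\qquad
g_{z}=\frac{(K_{z}^{\omega})^{\alpha}}{\|K_{z}^{\omega}\|^{\alpha+1}_{L^{\alpha+1}(\omega)}},\qquad \alpha p>1,\ \beta=\alpha+1-\tfrac{1}{p},
\]
and therefore leans on Proposition~\ref{lla45} for all three estimates it needs: the $L^{\beta}$-norm and Bloch-norm asymptotics of $K_{z}^{\omega}$ and the local comparison $|K_{z}^{\omega}(\zeta)|\asymp K_{z}^{\omega}(z)$ on $D(z,r)$, i.e.\ the Pel\'{a}ez--R\"{a}tty\"{a}--Sierra kernel estimates. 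You instead take explicit kernels $(1-\bar{a}z)^{-\gamma}$ with normalizations written directly in terms of $\widehat{\omega}(a)$ and $1-|a|$, so the only nonelementary input is the weighted Forelli--Rudin estimate recorded in Remark~\ref{rema-doubling}; the remaining pieces (the Bloch bound for $g_{a}$, the comparison $|1-\bar{a}z|\asymp 1-|a|$ on $D(a,r)$, and positivity of $f_{a}\overline{g_{a}}$ via matched principal-branch powers) are hand computations. What your route buys is self-containedness: it bypasses Proposition~\ref{lla45} entirely. What the paper's route buys is economy across the paper: the identical pair $(f_{z},g_{z})$ is reused verbatim in the necessity part of the compactness theorem (Theorem~\ref{RC}), where one additionally needs $f_{z}\to 0$ uniformly on compact sets as $|z|\to 1^{-}$ --- a property your $f_{a}$ also enjoys, since $\widehat{\omega}(a)\gtrsim(1-|a|)^{1/C}$ by Corollary~\ref{coro-Lemma}, so your variant would serve there as well. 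Finally, both arguments rest on the same tacit Fubini step behind the identity $\langle T_{\mu,\omega}f,g\rangle_{L^{2}(\omega)}=\int_{\mathbb{D}}f\bar{g}\,d\mu$ (the paper uses it silently in the first line of \eqref{rp3}); you at least flag the issue and note that your test functions are bounded, which makes the justification routine.
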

\begin{proof}
Choose a real number $\alpha$ with $\alpha p>1$. Let $\beta=\alpha+1-\frac{1}{p}$. For $z\in\mathbb{D}$, we define
\[
f_{z}(\zeta)=\frac{(K_{z}^{\omega}(\zeta))^{\alpha}}
{\|K_{z}^{\omega}\|^{\beta}_{L^{\beta}(\omega)}}~\text{and}~g_{z}(\zeta)=\frac{(K_{z}^{\omega}(\zeta))^{\alpha}}
{\|K_{z}^{\omega}\|^{\alpha+1}_{L^{\alpha+1}(\omega)}}, \quad \zeta\in\mathbb{D}.
\]
By Proposition \ref{lla45}, we have
\begin{equation}\label{rp1}
\sup_{z\in\mathbb{D}}\|f_z\|_{L^{p}(\omega)}<\infty
\end{equation}
and
\begin{equation}\label{rpnn}
\sup_{z\in\mathbb{D}}\|g_z\|_{\mathcal{B}}<\infty.
\end{equation}
It follows from Lemma \ref{L:bloch} and the boundedness of $T_{\mu,\omega}$ that
\begin{equation*}\label{rp0}
\sup_{z\in\mathbb{D}}|\langle T_{\mu,\omega}(f_z),g_z\rangle_{L^{2}(\omega)}|<\infty.
\end{equation*}
From Proposition \ref{lla45}, we know that there exist positive constants $C_1,C_2,C_3$ and $r\in(0,1)$ such that for any $z\in\mathbb{D}$,
\begin{align}\label{rp3}
|\langle T_{\mu,\omega}(f_z),g_z\rangle_{L^{2}(\omega)}|&=|\langle f_{z},g_{z}\rangle_{L^{2}(\mu)}|\nonumber\\
&\geq C_1\widehat{\omega}(z)^{2\alpha-\frac{1}{p}}(1-|z|)^{2\alpha-\frac{1}{p}}
\int_{D(z,r)}|K_{z}^{\omega}(\zeta)|^{2\alpha}d\mu(\zeta)\nonumber\\
&\geq C_2\widehat{\omega}(z)^{2\alpha-\frac{1}{p}}(1-|z|)^{2\alpha-\frac{1}{p}}
\mu(D(z,r))(K_{z}^{\omega}(z))^{2\alpha}\nonumber\\
&\geq C_3
\frac{\mu(D(z,r))}{\widehat{\omega}(z)^{\frac{1}{p}}(1-|z|)^{\frac{1}{p}}}.
\end{align}
Hence, we obtain
\[
\sup_{z\in\mathbb{D}}\frac{\mu(D(z,r))}{\widehat{\omega}(z)^{\frac{1}{p}}(1-|z|)^{\frac{1}{p}}}<\infty.
\]
By Lemma \ref{P:D}, we conclude that $\mu$ is a $\frac{1}{p}$-Carleson measure for $L_{a}^{1}(\omega)$ and complete the proof.
\end{proof}

Now we are devoted to proving the $T1$-type condition of Theorem \ref{T:regular}. Before that, we are going to make some preparations. In order to construct some auxiliary weights mentioned in the introduction, we provide some discussions about $\widehat{\mathcal{D}}$, $\mathcal{D}$ and regular weights.

\begin{proposition}\label{property hat}
Let $0<s<+\infty$ and $t>-1$.
If $\omega\in\widehat{\mathcal{D}}$, then $\tau(z)=\widehat{\omega}(z)^{s}(1-|z|)^{t}$ belongs to $\widehat{\mathcal{D}}$.
\end{proposition}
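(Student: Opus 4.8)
The plan is to verify directly that $\tau(z)=\widehat{\omega}(z)^{s}(1-|z|)^{t}$ satisfies the single forward-doubling estimate
\[
\widehat{\tau}(r)\leq C\,\widehat{\tau}\bigg(\frac{1+r}{2}\bigg),\qquad r\in[0,1),
\]
which is precisely the definition of membership in $\widehat{\mathcal{D}}$. The first step is to understand the tail integral $\widehat{\tau}(r)=\int_{r}^{1}\widehat{\omega}(u)^{s}(1-u)^{t}\,du$. Because $\widehat{\omega}$ is decreasing, on any interval $[r,\frac{1+r}{2}]$ the factor $\widehat{\omega}(u)^{s}$ is comparable to its value at the left endpoint, $\widehat{\omega}(r)^{s}$, up to a constant controlled by the doubling hypothesis $\widehat{\omega}(r)\leq C\widehat{\omega}(\frac{1+r}{2})$; so I expect $\widehat{\tau}$ to factor, up to constants, as $\widehat{\omega}(r)^{s}$ times the elementary integral $\int_{r}^{1}(1-u)^{t}\,du\asymp (1-r)^{t+1}$ (this is where the hypothesis $t>-1$ is needed for convergence). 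Establishing the two-sided comparison
\[
\widehat{\tau}(r)\asymp \widehat{\omega}(r)^{s}(1-r)^{t+1}
\]
is the technical heart of the argument.

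Once that comparison is in hand, the forward-doubling estimate for $\tau$ reduces to comparing $\widehat{\omega}(r)^{s}(1-r)^{t+1}$ with $\widehat{\omega}(\frac{1+r}{2})^{s}(1-\frac{1+r}{2})^{t+1}=\widehat{\omega}(\frac{1+r}{2})^{s}\big(\frac{1-r}{2}\big)^{t+1}$. The power-of-$(1-r)$ factors differ only by the constant $2^{t+1}$, and the ratio of the $\widehat{\omega}$-factors is bounded by $C^{s}$ using the $\widehat{\mathcal{D}}$ hypothesis on $\omega$. Multiplying these bounded ratios gives the desired constant for $\tau$, completing the proof.

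The main obstacle I anticipate is making the comparison $\widehat{\tau}(r)\asymp\widehat{\omega}(r)^{s}(1-r)^{t+1}$ fully rigorous for \emph{all} $r\in[0,1)$ rather than only in the limit $r\to 1^{-}$. The upper bound $\widehat{\tau}(r)\leq \widehat{\omega}(r)^{s}(1-r)^{t+1}/(t+1)$ is immediate from monotonicity of $\widehat{\omega}$ when $t\geq 0$, but for $-1<t<0$ the weight $(1-u)^{t}$ blows up near $u=1$ and one must be slightly more careful, splitting the integral and using that $\widehat{\omega}$ stays comparable on dyadic-type subintervals. For the lower bound, the clean trick is to integrate only over the half-interval $[r,\frac{1+r}{2}]$, on which $\widehat{\omega}(u)^{s}\geq \widehat{\omega}(\frac{1+r}{2})^{s}\gtrsim \widehat{\omega}(r)^{s}$ by the doubling condition, yielding
\[
\widehat{\tau}(r)\geq \int_{r}^{\frac{1+r}{2}}\widehat{\omega}(u)^{s}(1-u)^{t}\,du\gtrsim \widehat{\omega}(r)^{s}(1-r)^{t+1}.
\]
I would handle the sign of $t$ by a short case distinction and otherwise keep the estimates at the level of the doubling inequality, avoiding any explicit computation with $\omega$ itself.
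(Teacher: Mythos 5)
Your proof is correct and is essentially the paper's own argument: the paper's displayed chain of inequalities is exactly your two-sided comparison $\widehat{\tau}(r)\asymp\widehat{\omega}(r)^{s}(1-r)^{t+1}$ — upper bound by monotonicity of $\widehat{\omega}$, lower bound by restricting the integral to a subinterval on which $\widehat{\omega}$ is comparable to its value at the left endpoint via forward doubling (the paper uses $[\frac{1+r}{2},\frac{3+r}{4}]$ inside $\widehat{\tau}(\frac{1+r}{2})$, which is your lower bound applied at the point $\frac{1+r}{2}$) — combined with the doubling hypothesis on $\omega$. One remark: the case distinction you anticipate for $-1<t<0$ is unnecessary, since the upper bound only requires the pointwise estimate $\widehat{\omega}(u)^{s}\leq\widehat{\omega}(r)^{s}$ together with the exact evaluation $\int_{r}^{1}(1-u)^{t}\,du=(1-r)^{t+1}/(t+1)$, valid for every $t>-1$; the blow-up of $(1-u)^{t}$ near $u=1$ is harmless because this integral converges.
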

\begin{proof}
%By the equivalent definition of $\mathcal{D}$ weight (see \cite[Lemma A(ii) and Lemma B(ii)]{PRS}),
%%there exist constants $\alpha,\beta>0$ such that for all $0\leq r_{1}\leq r_{2}<1$,
%\begin{eqnarray*}\label{eq239}
%\bigg(\frac{1-r_{1}}{1-r_{2}}\bigg)^{\alpha}\widehat{\omega}(r_{2})\lesssim\widehat{\omega}(r_{1})
%\lesssim\bigg(\frac{1-r_{1}}{1-r_{2}}\bigg)^{\beta}
%\widehat{\omega}(r_{2}).
%\end{eqnarray*}
%Note that $0<s<+\infty$ and $t>-1$. Then we have
%\begin{align*}\label{PRO1}
%\sup_{r\in [0,1)}\frac{\int_{r}^{1}\tau(\rho) d\rho}
%{\int_{\frac{1+r}{2}}^{1}\tau(\rho) d\rho}
%&\lesssim
%\sup_{r\in [0,1)}
%\frac{\frac{\widehat{\omega}(r)^{s}}{(1-r)^{s\alpha}}\int_{r}^{1}(1-\rho)^{t+s\alpha}d\rho}
%{\frac{\widehat{\omega}(\frac{1+r}{2})^{s}}{(1-r)^{s\beta}}\int_{\frac{1+r}{2}}^{1}(1-\rho)^{t+s\beta}d\rho}
%\\
%&=
%\sup_{r\in [0,1)}
%\frac{\frac{\widehat{\omega}(r)^{s}}{(t+s\alpha+1)}(1-r)^{t+1}}
%{\frac{\widehat{\omega}(\frac{1+r}{2})^{s}}{(t+s\beta+1)(1-r)^{s\beta}}(\frac{1-r}{2})^{t+s\beta+1}}\\
%&\asymp
%\sup_{r\in [0,1)}\frac{\widehat{\omega}(r)^{s}}{\widehat{\omega}(\frac{1+r}{2})^{s}}<\infty,
%\end{align*}
For any $r\in[0,1)$, it holds that
\begin{align*}
\int_{r}^{1}\widehat{\omega}(\rho)^{s}(1-\rho)^{t}d\rho
&\leq\widehat{\omega}(r)^{s}\int_{r}^{1}(1-\rho)^{t}d\rho\lesssim\widehat{\omega}(\frac{1+r}{2})^{s}(1-r)^{t+1}\\
&\lesssim\widehat{\omega}(\frac{3+r}{4})^{s}\int_{\frac{1+r}{2}}
^{\frac{3+r}{4}}(1-\rho)^{t}d\rho
\leq\int_{\frac{1+r}{2}}^{1}\widehat{\omega}(\rho)^{s}(1-\rho)^{t}d\rho,
\end{align*}
where the second and the third inequality are both due to the forward doubling condition of $\omega$.
Thus, we obtain that $\tau\in\widehat{\mathcal{D}}$.
\end{proof}

The proof of the proposition above is provided by one of the anonymous referees, which improves the original statement and simplifies the proof.

\begin{corollary}\label{choose of cp}
Let $0<p<1$ and $\omega\in\widehat{\mathcal{D}}$. Then for each $c_{p}\in(2-\frac{1}{p},+\infty)$, the weight
$
\widehat{\omega}(z)^{\frac{1}{p}}(1-|z|)^{c_p+\frac{1}{p}-3}
$
belongs to $\widehat{\mathcal{D}}$.
\end{corollary}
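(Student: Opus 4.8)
The plan is to obtain this as an immediate specialization of Proposition \ref{property hat}. First I would match the exponents: set $s = \frac{1}{p}$ and $t = c_p + \frac{1}{p} - 3$, so that
\[
\widehat{\omega}(z)^{s}(1-|z|)^{t} = \widehat{\omega}(z)^{\frac{1}{p}}(1-|z|)^{c_p + \frac{1}{p} - 3}
\]
is exactly the weight whose membership in $\widehat{\mathcal{D}}$ we wish to establish. It then suffices to verify that this pair $(s,t)$ satisfies the two hypotheses of the proposition, after which the conclusion follows without further work.

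For the condition $0 < s < +\infty$: since $0 < p < 1$ we have $s = \frac{1}{p} > 1$, so this holds automatically. For the condition $t > -1$: the inequality $c_p + \frac{1}{p} - 3 > -1$ rearranges to $c_p > 2 - \frac{1}{p}$, which is precisely the standing assumption $c_p \in (2 - \frac{1}{p}, +\infty)$. Hence both hypotheses are met, and Proposition \ref{property hat} yields directly that $\widehat{\omega}(z)^{\frac{1}{p}}(1-|z|)^{c_p + \frac{1}{p} - 3} \in \widehat{\mathcal{D}}$.

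I expect no genuine obstacle in this argument: all the analytic content (the forward doubling estimate for $\widehat{\omega}$ together with the comparison of integrals against $(1-\rho)^{t}$) has already been carried out in the proof of Proposition \ref{property hat}, so the corollary is purely a matter of inserting admissible parameters. The only point that warrants care is the bookkeeping that converts the stated range for $c_p$ into the integrability condition $t > -1$; the specific shift by $3$ and the coupling of the $(1-|z|)$-exponent to $\frac{1}{p}$ are arranged precisely so that the resulting auxiliary weight appears in the form required by the later reproducing-kernel and Carleson-measure estimates.
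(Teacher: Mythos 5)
Your proof is correct and takes essentially the same route as the paper: the paper states this result as an immediate corollary of Proposition \ref{property hat}, obtained exactly by the specialization $s=\frac{1}{p}$, $t=c_p+\frac{1}{p}-3$, and your verification that $s>0$ and $t>-1$ (the latter being equivalent to $c_p>2-\frac{1}{p}$) is precisely the required bookkeeping.
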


\begin{remark}\label{rema-doubling}
If $\omega$ and $\nu$ belong to $\widehat{\mathcal{D}}$, then there exist constants $C_{1}>1$ and $C_{2}>1$ such that
\begin{align*}
\int_{r}^{1}\omega(s)ds \leq C_{1}\int_{\frac{1+r}{2}}^{1}\omega(s)ds
\text{ and }
\int_{r}^{1}\nu(s)ds \leq C_{2}\int_{\frac{1+r}{2}}^{1}\nu(s)ds,\quad \forall r\in[0,1).
\end{align*}
Let $p,q\in(0,\infty)$ and
\[
t_0>\textmd{max}\bigg\{p^{-1}\log_{2}C_{1}+p^{-1}-2,~\log_{2}C_{2}-2q\bigg\}.
\]
By \cite[Lemma 2.1]{PR00}, for each $t\geq t_{0}$, we have the following useful estimates,
\[
\int_{\mathbb{D}}\frac{\omega(z)}{|1-\bar{z}\zeta|^{pt+2p}}dA(z) \asymp\frac{\widehat{\omega}(\zeta)}{(1-|\zeta|)^{pt+2p-1}},\quad \zeta\in\mathbb{D}
\]
and
\[
\int_{\mathbb{D}}\frac{\nu(z)}{|1-\bar{z}\zeta|^{2q+t+1}}dA(z) \asymp\frac{\widehat{\nu}(\zeta)}{(1-|\zeta|)^{2q+t}}, \quad \zeta\in\mathbb{D}.
\]
\end{remark}
These two formulas above can be regarded as a weighted version of the classical integral estimates (see \cite[Lemma 3.10]{zhu2007}) and will be frequently used in the proof of Theorem \ref{T:regular} and Theorem \ref{RC}.
\vskip 0.2in

The next result (see \cite[Proposition 5]{PRS} and its proof) will play an important role in the subsequent work. It shows that there exists a regular weight $\kappa$ related to any given $\mathcal{D}$ weight $\omega$. Usually discussions on regular weights turn out to be more convenient to deal with than on $\mathcal{D}$ weights.
\begin{lemma}\label{prop-reverse}
Let $0<p<\infty$. If $\omega$ is a $\mathcal{D}$ weight, then $\kappa(z)=(1-|z|)^{-1}\widehat{\omega}(z)$ is regular and $\|f\|_{L^{p}(\omega)}\asymp\|f\|_{L^{p}(\kappa)}$, for each analytic function $f$ on $\mathbb{D}$.
\end{lemma}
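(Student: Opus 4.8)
The plan is to funnel both conclusions through the single pointwise estimate
\[
\widehat{\kappa}(r)=\int_{r}^{1}\frac{\widehat{\omega}(s)}{1-s}\,ds\asymp\widehat{\omega}(r),\qquad r\in[0,1).
\]
Granting this, regularity of $\kappa$ is immediate: $\kappa$ is continuous because $\widehat{\omega}$ is, it is integrable because $\int_{\mathbb{D}}\kappa\,dA\le 2\widehat{\kappa}(0)\asymp\widehat{\omega}(0)<\infty$, and since $\kappa(r)=\widehat{\omega}(r)/(1-r)$ its distortion function satisfies
\[
\psi_{\kappa}(r)=\frac{\widehat{\kappa}(r)}{\kappa(r)}=(1-r)\,\frac{\widehat{\kappa}(r)}{\widehat{\omega}(r)}\asymp(1-r),
\]
which is exactly the definition of a regular weight. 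So everything reduces to proving $\widehat{\kappa}\asymp\widehat{\omega}$ and then transferring it to $L^{p}$-norms.

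The lower bound $\widehat{\kappa}\gtrsim\widehat{\omega}$ is cheap and uses only forward doubling: restricting the integral to $[r,\tfrac{1+r}{2}]$, where $\widehat{\omega}(s)\ge\widehat{\omega}(\tfrac{1+r}{2})$ and $\int_{r}^{(1+r)/2}\frac{ds}{1-s}=\log 2$, gives $\widehat{\kappa}(r)\ge(\log 2)\,\widehat{\omega}(\tfrac{1+r}{2})\ge\frac{\log 2}{C_{3}}\widehat{\omega}(r)$ by the right-hand inequality of \eqref{Definition of D}. The upper bound is the heart of the matter, and it is exactly where the reverse doubling (the left-hand inequality of \eqref{Definition of D}, hence the full $\mathcal{D}$-hypothesis) is indispensable: without the geometric decay of $\widehat{\omega}$ the logarithmic factor $1/(1-s)$ could make $\widehat{\kappa}$ diverge relative to $\widehat{\omega}$. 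I would decompose $[r,1)$ along the sequence $r_{0}=r$, $r_{n+1}=1-\frac{1-r_{n}}{C_{2}}$, so that $1-r_{n}=C_{2}^{-n}(1-r)$ and $r_{n}\uparrow 1$. On each block $\int_{r_{n}}^{r_{n+1}}\frac{ds}{1-s}=\log C_{2}$, while iterating the reverse doubling condition $n$ times yields $\widehat{\omega}(r_{n})\le C_{1}^{-n}\widehat{\omega}(r)$. Summing the blocks and using monotonicity of $\widehat{\omega}$ on each,
\[
\widehat{\kappa}(r)=\sum_{n\ge 0}\int_{r_{n}}^{r_{n+1}}\frac{\widehat{\omega}(s)}{1-s}\,ds\le(\log C_{2})\sum_{n\ge 0}\widehat{\omega}(r_{n})\le(\log C_{2})\,\widehat{\omega}(r)\sum_{n\ge 0}C_{1}^{-n}\asymp\widehat{\omega}(r),
\]
the series converging precisely because $C_{1}>1$. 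This establishes $\widehat{\kappa}\asymp\widehat{\omega}$.

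For the norm equivalence I would exploit that, for analytic $f$ and any $p>0$, the integral mean $m(r):=\frac{1}{2\pi}\int_{0}^{2\pi}|f(re^{i\theta})|^{p}\,d\theta$ is non-decreasing in $r$ (subharmonicity of $|f|^{p}$). Introduce $\Omega_{\omega}(r)=\int_{r}^{1}\omega(s)s\,ds$, which satisfies $\Omega_{\omega}\asymp\widehat{\omega}$ on $[0,1)$ (on $[\tfrac12,1)$ because $s\asymp 1$ there, and on $[0,\tfrac12]$ because both quantities lie between fixed positive constants). Writing $\|f\|_{L^{p}(\omega)}^{p}=2\int_{0}^{1}m(r)\omega(r)r\,dr$, substituting $m(r)=m(0)+\int_{(0,r]}dm$ and interchanging the order of integration by Tonelli (all integrands being nonnegative and $dm\ge 0$) gives the representation
\[
\|f\|_{L^{p}(\omega)}^{p}=2\Big(m(0)\,\Omega_{\omega}(0)+\int_{0}^{1}\Omega_{\omega}(t)\,dm(t)\Big)\in[0,\infty].
\]
The identical representation holds for $\kappa$ with $\Omega_{\kappa}$ in place of $\Omega_{\omega}$. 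Since $\Omega_{\omega}\asymp\widehat{\omega}\asymp\widehat{\kappa}\asymp\Omega_{\kappa}$ pointwise (and likewise at $0$), comparing the two representations term by term yields $\|f\|_{L^{p}(\omega)}^{p}\asymp\|f\|_{L^{p}(\kappa)}^{p}$, with the two sides finite or infinite together; taking $p$-th roots finishes the proof. The only genuinely delicate step is the geometric summation in the upper bound for $\widehat{\kappa}$, which is where the reverse doubling constant $C_{1}>1$ is used; the remaining steps are bookkeeping.
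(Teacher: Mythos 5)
Your proof is correct. There is, however, nothing in the paper to compare it against: the paper does not prove this lemma at all, but quotes it from Pel\'{a}ez--R\"{a}tty\"{a}--Sierra \cite[Proposition 5]{PRS} (``see \cite[Proposition 5]{PRS} and its proof''). Your argument is a sound, self-contained reconstruction along the standard lines of that cited literature: the two-sided doubling hypothesis is converted into the comparison $\widehat{\kappa}\asymp\widehat{\omega}$ --- your block decomposition along the scales $1-r_{n}=C_{2}^{-n}(1-r)$, with $\widehat{\omega}(r_{n})\leq C_{1}^{-n}\widehat{\omega}(r)$ summing geometrically, is exactly the discrete form of the power-type decay estimate $\widehat{\omega}(r)\lesssim\big(\tfrac{1-r}{1-t}\big)^{\beta}\widehat{\omega}(t)$ that Pel\'{a}ez--R\"{a}tty\"{a} derive from reverse doubling --- and the norm equivalence is then transferred through the monotonicity of the integral means $m(r)$ of $|f|^{p}$, which is also how the source handles it. Two details deserve mention, both fine: (i) using Tonelli on the representation $m(r)=m(0)+\int_{(0,r]}dm$ is cleaner than a naive integration by parts, since it sidesteps justifying the vanishing of the boundary term $m(r)\Omega_{\omega}(r)$ as $r\to1^{-}$ in the case $\|f\|_{L^{p}(\omega)}=\infty$; (ii) your comparison $\Omega_{\omega}\asymp\widehat{\omega}$ on $[0,\tfrac12]$ implicitly requires $\widehat{\omega}(\tfrac12)>0$, which is automatic for a $\mathcal{D}$ weight (if $\widehat{\omega}$ vanished at some point, forward doubling would force $\omega=0$ a.e.), so no gap results.
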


From Lemma \ref{prop-reverse}, we get the two following useful results.

\begin{corollary}\label{coro-Lemma}
Let $\omega$ be a $\mathcal{D}$ weight and $\kappa(z)=(1-|z|)^{-1}\widehat{\omega}(z)$, $z\in\mathbb{D}$. Then there exist constants $C_{1}\in(0,1)$ and $C_{2}>1$ such that for any constant $C_{3}\in(0,C_{1}]$,
\[
\kappa(r)\geq C_{2}^{-1}\widehat{\kappa}(0)(1-r)^{\frac{1}{C_{3}}-1}, \quad r\in[0,1).
\]
\end{corollary}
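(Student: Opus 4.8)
The plan is to extract from the regularity of $\kappa$ a differential inequality for $\widehat{\kappa}$ and then integrate it. By Lemma \ref{prop-reverse}, $\kappa$ is a regular weight, so by definition its distortion function $\psi_{\kappa}(r)=\widehat{\kappa}(r)/\kappa(r)$ satisfies $\psi_{\kappa}(r)\asymp(1-r)$; I would fix constants $0<a\le b$ with
\[
a(1-r)\le\psi_{\kappa}(r)\le b(1-r),\qquad r\in[0,1).
\]
Since a regular weight is continuous and positive, $\widehat{\kappa}(r)=\int_{r}^{1}\kappa(s)\,ds$ is $C^{1}$ on $[0,1)$ with $\widehat{\kappa}'=-\kappa$ and $\widehat{\kappa}>0$, whence
\[
(\log\widehat{\kappa})'(r)=\frac{-\kappa(r)}{\widehat{\kappa}(r)}=-\frac{1}{\psi_{\kappa}(r)}.
\]

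First I would use the lower estimate $\psi_{\kappa}(r)\ge a(1-r)$ to get $(\log\widehat{\kappa})'(r)\ge-\frac{1}{a(1-r)}$ and integrate it from $0$ to $r$. Since $\int_{0}^{r}\frac{ds}{1-s}=-\log(1-r)$, this produces $\log\widehat{\kappa}(r)-\log\widehat{\kappa}(0)\ge\frac1a\log(1-r)$, i.e.
\[
\widehat{\kappa}(r)\ge\widehat{\kappa}(0)\,(1-r)^{1/a},\qquad r\in[0,1).
\]
Next I would convert this into a lower bound for $\kappa$ itself by using the \emph{upper} estimate $\psi_{\kappa}(r)\le b(1-r)$ in the identity $\kappa(r)=\widehat{\kappa}(r)/\psi_{\kappa}(r)$:
\[
\kappa(r)=\frac{\widehat{\kappa}(r)}{\psi_{\kappa}(r)}\ge\frac{\widehat{\kappa}(r)}{b(1-r)}\ge\frac{1}{b}\,\widehat{\kappa}(0)\,(1-r)^{\frac1a-1}.
\]

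Finally, to recover the stated form valid for every $C_{3}\in(0,C_{1}]$, I would set $C_{1}=\min\{a,\tfrac12\}\in(0,1)$ and $C_{2}=\max\{b,2\}>1$. For $t=1-r\in(0,1]$ the map $x\mapsto t^{x}$ is nonincreasing, so for $C_{3}\le C_{1}\le a$ one has $\tfrac{1}{C_{3}}-1\ge\tfrac1a-1$ and hence $(1-r)^{1/C_{3}-1}\le(1-r)^{1/a-1}$; combined with $C_{2}^{-1}\le b^{-1}$ this gives
\[
\frac{1}{C_{2}}\,\widehat{\kappa}(0)\,(1-r)^{1/C_{3}-1}\le\frac1b\,\widehat{\kappa}(0)\,(1-r)^{1/a-1}\le\kappa(r),
\]
which is exactly the claim. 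The computation is routine; the two points that must be handled with care are the bookkeeping of which side of $\psi_{\kappa}\asymp(1-r)$ to invoke at each stage (the lower bound to integrate $\widehat{\kappa}$ upward, the upper bound to pass from $\widehat{\kappa}$ back to $\kappa$), and the monotonicity-in-the-exponent argument that upgrades the single exponent $\tfrac1a-1$ to the whole range $C_{3}\in(0,C_{1}]$. I do not anticipate a genuine obstacle beyond this, since all the analytic input is already packaged in the regularity of $\kappa$.
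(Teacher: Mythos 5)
Your proof is correct and is essentially the paper's own argument in differential form: the paper's observation that $h_{C_3}(r)=\widehat{\kappa}(r)/(1-r)^{1/C_3}$ is increasing (for $C_3$ below the lower regularity constant) is exactly your integrated inequality $(\log\widehat{\kappa})'(r)\geq -\tfrac{1}{C_3(1-r)}$, and both proofs then use the upper regularity bound $\widehat{\kappa}(r)\lesssim\kappa(r)(1-r)$ to pass from $\widehat{\kappa}$ back to $\kappa$. The only cosmetic difference is that you prove the bound for the single exponent $1/a$ and extend to all $C_3\in(0,C_1]$ by monotonicity of $(1-r)^{x}$ in $x$, whereas the paper runs the monotone-function argument for each $C_3$ directly.
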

\begin{proof} By Lemma \ref{prop-reverse}, $\kappa$ is regular. Then there exist constants $C_{1}\in(0,1)$ and $C_{2}>1$ depending only on $\omega$ such that
\begin{equation*}\label{PRO3}
C_{1}\kappa(r)(1-r)\leq\widehat{\kappa}(r)\leq C_{2}\kappa(r)(1-r), \quad r\in[0,1).
\end{equation*}
For any $C_{3}\in (0,C_1]$,
\[
h_{C_{3}}(r):=\frac{\widehat{\kappa}(r)}{(1-r)^{\frac{1}{C_{3}}}}
\]
is increasing in $r\in[0,1)$. Therefore,
\[ h_{C_{3}}(r)\geq h_{C_{3}}(0)=\widehat{\kappa}(0) \text{ and } \widehat{\kappa}(r)\geq\widehat{\kappa}(0)(1-r)^{\frac{1}{C_{3}}},
\quad r\in [0,1).
\]
Hence
\[
\kappa(r)\geq C_{2}^{-1}\widehat{\kappa}(0)(1-r)^{\frac{1}{C_{3}}-1}, \quad r\in[0,1).
\]
\end{proof}

\begin{corollary}\label{choose of c1}
If $\omega$ is a $\mathcal{D}$ weight, then there exists a constant $c_{1}\in(0,1)$ ($c_{1}$ only depends on $\omega$) such that
$
\widehat{\omega}(z)(1-|z|)^{c_1-2}
$
is regular.
\end{corollary}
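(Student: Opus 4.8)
The goal is to produce a constant $c_1 \in (0,1)$, depending only on $\omega$, such that the weight $\nu(z) = \widehat{\omega}(z)(1-|z|)^{c_1-2}$ is regular; that is, $\psi_\nu(r) = \widehat{\nu}(r)/\nu(r) \asymp (1-r)$. The plan is to pass through the regular weight $\kappa(z) = (1-|z|)^{-1}\widehat{\omega}(z)$ supplied by Lemma \ref{prop-reverse}, so that I may work with the favorable two-sided estimate $C_1'\kappa(r)(1-r) \leq \widehat{\kappa}(r) \leq C_2'\kappa(r)(1-r)$ rather than directly with the $\mathcal{D}$-weight $\omega$. Observe that $\nu(z) = \kappa(z)(1-|z|)^{c_1-1}$, so the task is to show that multiplying the regular weight $\kappa$ by a power $(1-|z|)^{c_1-1}$ preserves regularity for a suitable choice of the exponent.

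First I would record the regularity estimate for $\kappa$ from Lemma \ref{prop-reverse} in the two-sided form above, together with the lower bound $\kappa(r) \geq C_2^{-1}\widehat{\kappa}(0)(1-r)^{1/C_3 - 1}$ from Corollary \ref{coro-Lemma}; the latter guarantees that $\kappa$ cannot decay faster than a fixed power of $(1-r)$, which will be what keeps $\nu$ from degenerating. The key computation is to estimate $\widehat{\nu}(r) = \int_r^1 \kappa(\rho)(1-\rho)^{c_1-1}\,d\rho$. Using the upper bound $\kappa(\rho) \lesssim \widehat{\kappa}(\rho)/(1-\rho)$ inside the integral and integrating by parts (or directly using that $\widehat{\kappa}(\rho)/(1-\rho)^{1/C_3}$ is monotone, as in the proof of Corollary \ref{coro-Lemma}), I expect to extract the bound $\widehat{\nu}(r) \lesssim \widehat{\kappa}(r)(1-r)^{c_1-1} = \kappa(r)(1-r)^{c_1-1}\cdot\frac{\widehat{\kappa}(r)}{\kappa(r)} \asymp \nu(r)(1-r)$, which is the upper half of the regularity condition $\psi_\nu(r) \lesssim (1-r)$. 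For the matching lower bound, I would restrict the integral defining $\widehat{\nu}(r)$ to a subinterval such as $[r, \frac{1+r}{2}]$ and use the reverse doubling of $\kappa$ (equivalently the lower bound on $\kappa$) to show $\widehat{\nu}(r) \gtrsim \nu(r)(1-r)$.

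The delicate point, and the place where the specific choice of $c_1$ must enter, is the integrability and convergence of $\int_r^1 \kappa(\rho)(1-\rho)^{c_1-1}\,d\rho$ near $1$. Since $\kappa$ is only known to satisfy doubling-type bounds rather than a pointwise power law, I must ensure the tail integral converges; combining the regularity of $\kappa$ with the forward doubling condition forces $\kappa(\rho) \lesssim (1-\rho)^{-M}$ for some $M = M(\omega) < 1$ coming from the upper doubling constant $C_1$ in \eqref{Definition of D}, and so I would choose $c_1 \in (0,1)$ small enough (equivalently, $1 - c_1$ large enough relative to $M$) to guarantee $\widehat{\nu}(r) < \infty$ and the integral is controlled by its value near $r$. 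The main obstacle I anticipate is precisely this quantitative bookkeeping: making the exponent $c_1$ simultaneously small enough for convergence of $\widehat{\nu}$ and compatible with the monotonicity argument that yields both inequalities of the regularity condition, while tracking that all constants depend only on $\omega$ through the doubling constants and the regularity constants of $\kappa$. Once $c_1$ is pinned down this way, the two-sided estimate $\widehat{\nu}(r) \asymp \nu(r)(1-r)$ gives $\psi_\nu(r) \asymp (1-r)$, and continuity of $\nu$ is inherited from that of $\kappa$, so $\nu$ is regular as required.
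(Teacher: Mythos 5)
Your outline follows the same route as the paper's proof: pass to the regular weight $\kappa(z)=(1-|z|)^{-1}\widehat{\omega}(z)$ from Lemma \ref{prop-reverse}, write the target weight as $\nu(z)=\kappa(z)(1-|z|)^{c_1-1}$, and derive $\widehat{\nu}(r)\asymp\nu(r)(1-r)$ from the two-sided estimate $C^{-1}\kappa(r)(1-r)\le\widehat{\kappa}(r)\le C\kappa(r)(1-r)$, using monotonicity/integration by parts for the upper bound and reverse doubling for the lower bound (the paper's formula \eqref{add3} is exactly your integration by parts). However, at the decisive quantitative step --- the selection of $c_1$ --- your proposal has the constraint backwards, and with your choice the statement is false. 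You propose taking $c_1$ ``small enough (equivalently, $1-c_1$ large enough relative to $M$)'' where $\kappa(\rho)\lesssim(1-\rho)^{-M}$ with $M<1$. But the integrand of $\widehat{\nu}(r)$ satisfies $\kappa(\rho)(1-\rho)^{c_1-1}\lesssim(1-\rho)^{c_1-1-M}$, which is integrable near $\rho=1$ precisely when $c_1>M$: the convergence requirement forces $c_1$ to be \emph{large} (close to $1$), i.e.\ $1-c_1$ \emph{small}. Your own upper-bound computation shows the same thing: using the decreasing function $\widehat{\kappa}(\rho)/(1-\rho)^{1/C}$ to bound $\widehat{\kappa}(\rho)$ for $\rho\ge r$ leads to the factor $\int_r^1(1-\rho)^{c_1-2+1/C}\,d\rho$, which converges only when $c_1>1-C^{-1}$; this is exactly the paper's condition $c_1\in(1-C^{-1},1)$.

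The error is not cosmetic. Take $\omega(z)=(1-|z|^2)^{\alpha}$ with $\alpha\in(-1,0)$ close to $-1$; this is a $\mathcal{D}$ weight (indeed regular) with $\widehat{\omega}(r)\asymp(1-r)^{\alpha+1}$. Then
\[
\widehat{\omega}(z)(1-|z|)^{c_1-2}\asymp(1-|z|)^{\alpha+c_1-1},
\]
which fails to be integrable on $\mathbb{D}$ --- hence is not a weight, let alone regular --- whenever $c_1\le-\alpha$. Since $-\alpha$ can be arbitrarily close to $1$, no choice of ``small'' $c_1$ can work, and for a fixed such $\omega$ the admissible exponents are exactly those close to $1$. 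Once you reverse the inequality and take $c_1\in(1-C^{-1},1)$, where $C$ is the regularity constant of $\kappa$, the rest of your outline goes through and essentially reproduces the paper's argument. A further small correction: the pointwise bound $\kappa(\rho)\lesssim(1-\rho)^{-M}$ with $M<1$ comes from the \emph{reverse} doubling half of \eqref{Definition of D} (the inequality involving $C_1,C_2$), not from forward doubling; forward doubling only prevents $\widehat{\omega}$ from decaying too fast and is what you need for the lower bound $\widehat{\nu}(r)\gtrsim\nu(r)(1-r)$ instead.
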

\begin{proof} For any $z\in\mathbb{D}$, let
\[
\kappa(z)=\frac{\widehat{\omega}(z)}{1-|z|}.
\]
It follows from Lemma \ref{prop-reverse} that $\kappa$ is regular, and hence, there exists a constant $C>1$ depending only on $\omega$ such that
\begin{equation}\label{add2}
C^{-1}\kappa(r)(1-r)\leq \widehat{\kappa}(r)\leq C\kappa(r)(1-r), \quad r\in[0,1).
\end{equation}
Given a positive constant $c_{1}$. By an observation from \cite[pp. 10]{PR}, we know that $\frac{\kappa(z)}{(1-|z|)^{1-c_{1}}}$ is regular when $c_{1}>1-C^{-1}$. Here we give the detailed proof for the convenience of readers. By the decreasing property of the function $h_{C}(r):=\frac{\widehat{\kappa}(r)}{(1-r)^{\frac{1}{C}}}$ and the increasing property of the function $h_{C^{-1}}(r):=\frac{\widehat{\kappa}(r)}{(1-r)^{C}}$ over $[0,1)$ respectively, we deduce
\[
(1-r)^{C}\lesssim\widehat{\kappa}(r)\lesssim (1-r)^{\frac{1}{C}},\quad r\in[0,1).
\]
Together with an integration by parts, for $c_{1}>1-C^{-1}$, we obtain
\begin{equation}\label{add3}
\int_{r}^{1}\frac{\kappa(s)}{(1-s)^{1-c_{1}}}ds=\frac{\widehat{\kappa}(r)}{(1-r)^{1-c_{1}}}
+(1-c_{1})\int_{r}^{1}\frac{\widehat{\kappa}(s)}{(1-s)^{2-c_{1}}}ds.
\end{equation}
Combining \eqref{add2} with \eqref{add3}, we get the desired conclusion.
\end{proof}
In fact, for $\omega\in\widehat{\mathcal{D}}$, $\widehat{\omega}(z)(1-|z|)^{c_1-2}$ is regular for any $c_{1}\in(1,\infty)$
(one can prove this statement by a similar argument as in \cite[Lemma 1.7]{PR}). From Corollary \ref{choose of c1}, we know that
for $\omega\in\mathcal{D}$, $\widehat{\omega}(z)(1-|z|)^{c_1-2}$ is also regular for any $c_{1}\in(1-C^{-1},1]$. However, it fails for $\omega\in\widehat{\mathcal{D}}\backslash\mathcal{D}$.

\vskip 0.1in

Take a positive constant $c_{p}\in(2-\frac{1}{p},1)$ in Corollary \ref{choose of cp} when $p\in(0,1)$
and let $c_1$ be the constant in Corollary \ref{choose of c1}. Let $c=c_{p}$ when $p\in(0,1)$ and $c=c_{1}$ when $p=1$.
Then for $p\in(0,1]$, we define
\begin{equation}\label{E:nu}
\nu(z)=\widehat{\omega}(z)^{\frac{1}{p}}(1-|z|)^{c+\frac{1}{p}-3}.
\end{equation}
In conclusion, in Corollary \ref{choose of cp} and Corollary \ref{choose of c1}, we construct an auxiliary weight which belongs to $\widehat{\mathcal{D}}$ when $p\in(0,1]$, and especially it is regular when $p=1$. This auxiliary weight is crucial in the remaining proof of Theorem \ref{T:regular}
and Theorem \ref{RC}.
\vskip 0.1in

The next result gives a nice and useful property for regular weights.
\begin{proposition}\label{la45} \rm\cite[pp. 8]{PR} Let $\omega$ be a regular weight.
Then for any $z\in\mathbb{D}$ and $r\in (0,1)$,
\[
\omega(z)\asymp\omega(u),\quad\textmd{where}~u\in D(z,r).
\]
\end{proposition}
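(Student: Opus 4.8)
The plan is to reduce the comparison over the pseudohyperbolic disk to a one-dimensional Harnack-type estimate along the radius, exploiting that $\omega$ is radial. First I would invoke the standard hyperbolic-geometry fact that $u\in D(z,r)$ forces $1-|u|\asymp 1-|z|$ with comparison constants depending only on $r$; concretely $1-|u|^{2}\asymp 1-|z|^{2}\asymp|1-\bar z u|$ on $D(z,r)$ (see \cite{zhu2007}). Since $\omega(u)=\omega(|u|)$ and $\omega(z)=\omega(|z|)$ by radiality, it suffices to establish the following statement about the radial profile: whenever $s,t\in[0,1)$ satisfy $1-s\asymp 1-t$, one has $\omega(s)\asymp\omega(t)$, with the implied constants controlled by the ratio $(1-s)/(1-t)$.

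To prove this radial estimate I would exploit the defining relation of a regular weight, namely $\psi_{\omega}(\rho)=\widehat{\omega}(\rho)/\omega(\rho)\asymp 1-\rho$, together with $\widehat{\omega}'(\rho)=-\omega(\rho)$, which holds since $\omega$ is continuous. Fixing constants $0<A\le B$ with $A(1-\rho)\le\psi_{\omega}(\rho)\le B(1-\rho)$ and integrating the identity $\tfrac{d}{d\rho}\log\widehat{\omega}(\rho)=-1/\psi_{\omega}(\rho)$ over $[s,t]$ with $s\le t$ yields
\[
\Big(\tfrac{1-s}{1-t}\Big)^{1/B}\le\frac{\widehat{\omega}(s)}{\widehat{\omega}(t)}\le\Big(\tfrac{1-s}{1-t}\Big)^{1/A}.
\]
Passing back from $\widehat{\omega}$ to $\omega$ through $\omega(\rho)\asymp\widehat{\omega}(\rho)/(1-\rho)$ then converts these into the two-sided bounds $\big(\tfrac{1-s}{1-t}\big)^{1/B-1}\lesssim\omega(s)/\omega(t)\lesssim\big(\tfrac{1-s}{1-t}\big)^{1/A-1}$, so that $\omega(s)/\omega(t)$ is squeezed between fixed powers of $(1-s)/(1-t)$.

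Finally I would combine the two ingredients. Taking $s=\min\{|u|,|z|\}$ and $t=\max\{|u|,|z|\}$, the first step makes $(1-s)/(1-t)$ bounded above and below by constants depending only on the pseudohyperbolic radius $r$; hence every power appearing in the second step is bounded, and therefore $\omega(|u|)\asymp\omega(|z|)$, which is precisely the assertion. The main obstacle is the radial estimate of the second step: the regularity hypothesis controls only the distortion $\psi_{\omega}$, so one must integrate its reciprocal and carefully track how the constants $A,B$ pass into exponents, while remaining aware that the resulting exponents $1/A-1$ and $1/B-1$ may be of either sign, which is harmless once the two radii are comparable. The first and third steps are, respectively, a standard fact and routine bookkeeping.
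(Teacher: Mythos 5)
Your proof is correct. Note that the paper never proves this proposition itself: it is quoted from \cite[pp.~8]{PR} (the Pel\'aez--R\"atty\"a memoir), so there is no internal argument to compare against; what you have written is a faithful self-contained replacement for that citation, and it is essentially the standard mechanism behind it. Both ingredients check out: the comparison $1-|u|\asymp 1-|z|\asymp|1-\bar{z}u|$ on $D(z,r)$ is the standard estimate from \cite{zhu2007}, and the radial step is a correct computation --- continuity of $\omega$ gives $\widehat{\omega}'(\rho)=-\omega(\rho)$, hence $\frac{d}{d\rho}\log\widehat{\omega}(\rho)=-1/\psi_{\omega}(\rho)$, and integrating the two-sided bound $A(1-\rho)\le\psi_{\omega}(\rho)\le B(1-\rho)$ over $[s,t]$ with $s\le t$ yields
\[
\Big(\tfrac{1-s}{1-t}\Big)^{1/B}\le\frac{\widehat{\omega}(s)}{\widehat{\omega}(t)}\le\Big(\tfrac{1-s}{1-t}\Big)^{1/A},
\]
after which dividing by $\psi_{\omega}\asymp(1-\cdot)$ once more converts these into the two-sided power bounds on $\omega(s)/\omega(t)$. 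Since for $u\in D(z,r)$ the ratio $(1-s)/(1-t)$, with $s,t$ the minimum and maximum of $|u|,|z|$, lies in $[1,C(r)]$, every exponent is applied to a bounded quantity, and $\omega(u)\asymp\omega(z)$ follows with constants depending only on $r$ and $\omega$. Your remark that the exponents $1/A-1$ and $1/B-1$ may have either sign is the right point of care, and it is indeed harmless once the two radii are comparable; this integration of $1/\psi_{\omega}$ is precisely how Pel\'aez and R\"atty\"a derive such pointwise properties of regular weights in the cited source.
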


Before stating the next proposition, recall a sequence $\{z_{j}\}_{j=1}^{\infty}\subseteq\mathbb{D}$ is called \textit{$\delta$-separated} ($\delta>0$) if $\big|\frac{z_{j}-z_{k}}{1-\bar{z_{j}}z_{k}}\big|\geq\delta$ when $j\neq k$. For $r\in(0,1)$, we call a sequence $\{z_{j}\}_{j=1}^{\infty}$ is an \textit{$r$-lattice} if $\{z_{j}\}_{j=1}^{\infty}$ satisfies $\mathbb{D}=\bigcup\limits_{j=1}^{\infty}D(z_{j},5r)$ and $\frac{r}{5}$-separated. Note that every $z\in\mathbb{D}$ belongs to at most $N=N(r)$ pseudohyperbolic disks $D(z_{j},r)$, where $\{z_{j}\}_{j=1}^{\infty}$ is a separated sequence (see Lemma 12 of Chapter 2 in \cite{D1} or Lemma 3 in \cite{Lu}).

\vskip 0.1in
Now we are ready to prove the $T1$-type condition in the necessity part of Theorem \ref{T:regular}.
\begin{proposition}\label{P1:regular}
Suppose that $\omega$ is a $\mathcal{D}$ weight and $\mu$ is a positive Borel measure on $\mathbb{D}$. For $0<p\leq1$, set $\sigma^p(z)=\widehat{\omega}(z)$. If the Toeplitz operator $T_{\mu,\omega}: L^{p}_{a}(\omega)\to L^{1}_{a}(\omega)$ is bounded, then there exists a positive constant $t_{0}=t_{0}(\omega)$ such that  $\|T_{\mu,t}(1)\|_{\mathcal{LB}^{s}_{\sigma}}<\infty$, for each $t\geq t_{0}$ and $s=t-\frac{1}{p}+2$.
\end{proposition}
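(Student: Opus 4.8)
The plan is to read the proposition as a pointwise growth estimate for the analytic function $T_{\mu,t}(1)$. Differentiating under the integral sign gives $(T_{\mu,t}1)'(z)=(t+2)\int_{\mathbb{D}}\frac{\bar\zeta}{(1-z\bar\zeta)^{t+3}}\,d\mu(\zeta)$, so that $|(T_{\mu,t}1)'(z)|\le(t+2)\int_{\mathbb{D}}\frac{d\mu(\zeta)}{|1-z\bar\zeta|^{t+3}}$, while the constant term $|T_{\mu,t}(1)(0)|=\mu(\mathbb{D})$ is finite. Recalling $\sigma=\widehat\omega^{1/p}$ and $s+1=t-\frac{1}{p}+3$, the proposition reduces to showing, for $t\ge t_0$,
\[
\int_{\mathbb{D}}\frac{d\mu(\zeta)}{|1-z\bar\zeta|^{t+3}}\lesssim\frac{\widehat\omega(z)^{1/p}}{(1-|z|)^{t+3-\frac{1}{p}}\log\frac{2}{1-|z|^2}},\qquad z\in\mathbb{D}.
\]
The decisive feature is the factor $\big(\log\frac{2}{1-|z|^2}\big)^{-1}$: an $r$-lattice decomposition together with the plain $\frac{1}{p}$-Carleson estimate $\mu(D(w,r))\lesssim\widehat\omega(w)^{1/p}(1-|w|)^{1/p}$ (Lemma~\ref{P:D}, available through Proposition~\ref{P:regular}) and the integral estimate of Remark~\ref{rema-doubling} already produce the displayed bound \emph{without} the logarithm, which is exactly one power of $\log$ too weak. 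Hence the real work is to upgrade the Carleson estimate to a logarithmically improved one, and this is where the boundedness hypothesis — not merely its Carleson consequence — must be used.

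The key step is to extract from boundedness the improved local estimate
\[
\mu(D(z,r))\,\log\frac{2}{1-|z|^2}\lesssim\widehat\omega(z)^{1/p}(1-|z|)^{1/p},\qquad z\in\mathbb{D}.
\]
I would test the bounded bilinear form $(f,g)\mapsto\langle f,g\rangle_{L^2(\mu)}=\langle T_{\mu,\omega}f,g\rangle_{L^2(\omega)}$ on $L^p_a(\omega)\times\mathcal{B}$ (the identity and the duality being Lemma~\ref{L:bloch}) against the pair consisting of the concentrated kernel function $f_z=(K_z^\omega)^\alpha/\|K_z^\omega\|_{L^\beta(\omega)}^\beta$ from the proof of Proposition~\ref{P:regular}, for which $\sup_z\|f_z\|_{L^p(\omega)}<\infty$, and the logarithmic Bloch function $L_z(\zeta)=\log\frac{2}{1-\bar z\zeta}$, whose Bloch norm is $\asymp1$ uniformly in $z$. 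Because $\mathrm{Re}\,L_z(\zeta)=\log\frac{2}{|1-\bar z\zeta|}\ge0$ on $\mathbb{D}$ and $\mathrm{Re}\,L_z\asymp\log\frac{2}{1-|z|^2}$ on $D(z,r)$, one expects to bound $|\langle f_z,L_z\rangle_{L^2(\mu)}|$ from below by $\log\frac{2}{1-|z|^2}\int_{D(z,r)}|f_z|\,d\mu$; the estimate $|f_z|\asymp(\widehat\omega(z)(1-|z|))^{-1/p}$ on $D(z,r)$ (Proposition~\ref{lla45}) then gives $\int_{D(z,r)}|f_z|\,d\mu\asymp\mu(D(z,r))/(\widehat\omega(z)^{1/p}(1-|z|)^{1/p})$. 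Since boundedness forces $|\langle f_z,L_z\rangle_{L^2(\mu)}|\lesssim\|f_z\|_{L^p(\omega)}\|L_z\|_{\mathcal{B}}\lesssim1$, the improved estimate follows.

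Feeding this back into the lattice sum, with $|1-z\bar\zeta|\asymp|1-z\bar w_j|$ on $D(w_j,r)$, gives
\[
\int_{\mathbb{D}}\frac{d\mu(\zeta)}{|1-z\bar\zeta|^{t+3}}\lesssim\sum_j\frac{\mu(D(w_j,r))}{|1-z\bar w_j|^{t+3}}\lesssim\int_{\mathbb{D}}\frac{\widehat\omega(w)^{1/p}(1-|w|)^{\frac{1}{p}-2}}{\log\frac{2}{1-|w|}\,|1-z\bar w|^{t+3}}\,dA(w).
\]
The weight $\tau(w)=\widehat\omega(w)^{1/p}(1-|w|)^{1/p-2}$ lies in $\widehat{\mathcal{D}}$ (Proposition~\ref{property hat}; it is the regular companion $\kappa$ of Lemma~\ref{prop-reverse} when $p=1$), so Remark~\ref{rema-doubling} applies once I check that the slowly varying factor $(\log\frac{2}{1-|w|})^{-1}$ enters only through its value $\asymp(\log\frac{2}{1-|z|^2})^{-1}$ near $w=z$, where the integral concentrates (equivalently, $\widehat{\tau/\log}(z)\asymp\widehat\tau(z)/\log\frac{2}{1-|z|^2}$). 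This produces exactly the displayed logarithmically improved bound; substituting into the $\mathcal{LB}^s_\sigma$-norm, the two logarithms cancel and $\|T_{\mu,t}(1)\|_{\mathcal{LB}^s_\sigma}<\infty$.

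The step I expect to be the main obstacle is the positivity in the middle argument: replacing the manifestly nonnegative integrand $|K_z^\omega|^{2\alpha}$ of Proposition~\ref{P:regular} by the single power $(K_z^\omega)^\alpha$ paired against the complex $\overline{L_z}$ means the sign of $\mathrm{Re}\,(f_z\overline{L_z})$ is no longer automatic, and I must guarantee $\mathrm{Re}\,(f_z\overline{L_z})\gtrsim|f_z|\log\frac{2}{1-|z|^2}$ on the relevant disk. This forces me to control the argument of $K_z^\omega(\zeta)$ for $\zeta$ in a small pseudohyperbolic disk about $z$ — using $K_z^\omega(z)>0$ together with refinements of Proposition~\ref{lla45} and shrinking $r$ uniformly in $z$. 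This is precisely the place where the absence of an explicit kernel formula bites, and it is what compels the detour through the regular weight $\kappa$.
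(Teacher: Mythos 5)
Your strategy fails at its first step, and the failure is not reparable: both pivotal intermediate claims are false. Test them on the simplest instance of the hypotheses, $\omega\equiv 1$, $p=1$, $d\mu=dA$. Here $T_{\mu,\omega}$ is the restriction of the Bergman projection to $L^1_a(\mathbb{D})$, i.e.\ the identity, hence bounded (equivalently: $\mu$ is a $1$-Carleson measure, and by radial symmetry $T_{\mu,t}(1)\equiv\mathrm{const}$, so both conditions of Theorem \ref{T:regular} hold trivially). Yet your ``improved local estimate'' fails, since $\mu(D(z,r))\log\frac{2}{1-|z|^2}\asymp(1-|z|)^2\log\frac{2}{1-|z|^2}$ is not $\lesssim\widehat{\omega}(z)(1-|z|)=(1-|z|)^2$; and your reduction target fails as well, since $\int_{\mathbb{D}}|1-z\bar{\zeta}|^{-(t+3)}dA(\zeta)\asymp(1-|z|)^{-(t+1)}$ carries no factor $\big(\log\frac{2}{1-|z|^2}\big)^{-1}$. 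The error is committed in the opening line: replacing $(T_{\mu,t}1)'(z)=(t+2)\int_{\mathbb{D}}\bar{\zeta}(1-z\bar{\zeta})^{-(t+3)}d\mu(\zeta)$ by the integral of the modulus discards exactly the cancellation that the $T1$-condition encodes --- in this example the derivative vanishes identically while the modulus integral is as large as it can possibly be. Your testing argument collapses for the same reason, and not at the point you anticipated: with $p=1$, $\beta=\alpha$, a Taylor-coefficient computation gives $\langle f_z,L_z\rangle_{L^2(\mu)}\asymp 1$ uniformly in $z$, whereas the contribution of $D(z,r)$ alone to $\int|f_z|\,|L_z|\,d\mu$ is $\asymp\log\frac{2}{1-|z|^2}$ (here $|f_z|\asymp(1-|z|)^{-2}$ on $D(z,r)$ and $\mu(D(z,r))\asymp(1-|z|)^2$). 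So the positivity of $\mathrm{Re}\,(f_z\overline{L_z})$ on $D(z,r)$, which you flagged as the main obstacle, can in fact be arranged; what kills the argument is the tail, whose real part is forced to be large and negative and to cancel the logarithm. No shrinking of $r$ or refinement of Proposition \ref{lla45} can prevent this, because the conclusion you are aiming at is simply not a necessary condition for boundedness.

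The paper's proof locates the logarithm elsewhere: not in decay of $\mu$ (unavailable), but in the growth of Bloch functions. One tests $T_{\mu,\omega}$ on $h_z(\zeta)=\frac{(1-|z|^2)^{s+1}}{\sigma(z)}(1-\zeta\bar{z})^{-(t+3)}$, uniformly bounded in $L^p_a(\omega)$, and pairs against an \emph{arbitrary} $g\in\mathcal{B}$ via Lemma \ref{L:bloch}, obtaining $\frac{(1-|z|^2)^{s+1}}{\sigma(z)}\big|\int_{\mathbb{D}}\overline{g(u)}(1-u\bar{z})^{-(t+3)}d\mu(u)\big|\lesssim\|g\|_{\mathcal{B}}$. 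The oscillation term $\frac{(1-|z|^2)^{s+1}}{\sigma(z)}\int_{\mathbb{D}}|g(z)-g(u)|\,|1-\bar{u}z|^{-(t+3)}d\mu(u)$ is bounded by $\|g\|_{\mathcal{B}}$ using the pointwise Bloch oscillation estimate, a lattice decomposition, the plain Carleson bound of Proposition \ref{P:regular}, and the auxiliary weight $\nu$ of \eqref{E:nu}. Adding the two pieces yields $\frac{(1-|z|^2)^{s+1}}{\sigma(z)}|g(z)|\,|T_{\mu,t+1}(1)(z)|\lesssim\|g\|_{\mathcal{B}}$, and only now does the logarithm appear: taking the supremum over $\{g\in\mathcal{B}:\|g\|_{\mathcal{B}}\le 1,\ g(0)=0\}$ and invoking Theorem 5.7 of \cite{zhu2007}, for which $\sup|g(z)|\asymp\log\frac{2}{1-|z|^2}$, gives the logarithmic bound on $T_{\mu,t+1}(1)$. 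Finally, the identity $(t+2)T_{\mu,t+1}(1)(z)=(t+2)T_{\mu,t}(1)(z)+z(T_{\mu,t}1)'(z)$, together with the easy bound on $T_{\mu,t}(1)$ (harmless, since a spare power of $1-|z|$ remains there), transfers the estimate to the derivative. The complex integrals, and hence their cancellations, are kept intact at every stage; your proposal destroys them at the start and thereby attempts to prove a strictly stronger statement that is false.
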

\begin{proof}
Let
$
\kappa(z)=(1-|z|)^{-1}\widehat{\omega}(z),~z\in\mathbb{D}.
$
From Corollary \ref{coro-Lemma}, there exist constants $C_{1}\in(0,1)$ and $C_{2}>1$ such that for any constant $C_{3}\in(0,C_{1}]$,
\begin{equation}\label{kappa}
\kappa(r)\geq C_{2}^{-1}\widehat{\kappa}(0)(1-r)^{\frac{1}{C_{3}}-1}, \quad r\in[0,1).
\end{equation}
Let $\nu$ be the weight given by \eqref{E:nu}.
It follows from Corollary \ref{choose of cp}, Remark \ref{rema-doubling} and Corollary \ref{choose of c1} that we can choose a constant
$C_{4}\in (0, C_1]$ such that
for each
\[
t\geq t_0:=\frac{C_{4}+1}{pC_{4}}-2>0,
\]
we have
\begin{equation}\label{PR02}
\int_{\mathbb{D}}\frac{\omega(z)}{|1-\bar{z}\zeta|^{pt+2p}}dA(z) \asymp\frac{\widehat{\omega}(\zeta)}{(1-|\zeta|)^{pt+2p-1}}, \quad \zeta\in\mathbb{D}
\end{equation}
and
\begin{equation}\label{PR03}
\int_{\mathbb{D}}\frac{\nu(z)}{|1-\bar{z}\zeta|^{2c+t+1}}dA(z) \asymp\frac{\widehat{\nu}(\zeta)}{(1-|\zeta|)^{2c+t}}, \quad \zeta\in\mathbb{D}.
\end{equation}

Note $s=t-\frac{1}{p}+2$ and $\sigma^p(z)=\widehat{\omega}(z)$. For $z\in\mathbb{D}$,
let
\[
h_{z}(\zeta)=\frac{(1-|z|^{2})^{s+1}}{\sigma(z)}
\frac{1}{(1-\zeta\bar{z})^{t+3}}, \quad \zeta\in\mathbb{D}.
\]
For $0<p\leq 1$, we have
\begin{equation}\label{E:testh}
\|h_{z}\|_{L_{a}^{p}(\omega)}=\frac{(1-|z|^{2})^{s+1}}{\sigma(z)}
\bigg(\int_{\mathbb{D}}\frac{\omega(\zeta)}{|1-\zeta\bar{z}|^{pt+3p}}dA(\zeta)\bigg)^{\frac{1}{p}}
\asymp\frac{(1-|z|^{2})^{s+1}}{\sigma(z)}\frac{\widehat{\omega}(z)^{\frac{1}{p}}}{(1-|z|)^{t+3-\frac{1}{p}}}\asymp1.
\end{equation}
Together with Lemma \ref{L:bloch} and the boundedness of $T_{\mu,\omega}$, if $g\in\mathcal{B}$, we obtain
\begin{equation}\label{PR04}
\frac{(1-|z|^{2})^{s+1}}{\sigma(z)}\bigg|\int_{\mathbb{D}}\frac{\overline{g(u)}}{(1-u\bar{z})^{t+3}}d\mu(u)\bigg|
=|\langle T_{\mu,\omega}(h_{z}),g\rangle_{L^{2}(\omega)}|
\lesssim\|g\|_{\mathcal{B}}.
\end{equation}

Fix $r\in(0,1)$, without loss of generality, we consider an $r$-lattice $\{\xi_{j}\}_{j=1}^{\infty}$ such that $\{|\xi_j|\}_{j=1}^\infty$ is an increasing sequence and
$
\lim\limits_{j\to \infty}|\xi_j|=1.
$
Let $N$ be the positive integer such that
\[
D(\xi_{j},5r)\subseteq \bigg\{u:~\frac{3}{4}\leq |u|<1\bigg\},
\]
for any $j>N$.
Then there exists $r_{0}\in(0,1)$ such that
\begin{equation}\label{E:unr}
\bigcup\limits_{j=1}^{N}D(\xi_{j},5r)\subseteq D(0,r_{0}).
\end{equation}
Note that for any fixed $z\in\mathbb{D}$, the function
$
|1-\bar{z}u|^{-(2c+t+1)}
$
is subharmonic on $\mathbb{D}$. From Lemma \ref{prop-reverse}, we know $\kappa$ is regular. Recall that $\nu(z)=\sigma(z)(1-|z|)^{c+\frac{1}{p}-3}$. Together with Proposition \ref{la45}, for each $j>N$, we deduce
\begin{align}
\sup_{u\in D(\xi_{j},5r)}\frac{(1-|u|^{2})^{c-1}}{|1-\bar{z}u|^{2c+t+1}}\leq&
\sup_{u\in D(\xi_{j},5r)}\frac{(1-|u|^{2})^{c-1}}{\big(\frac{r(1-|u|^{2})}{2(1+r)}\big)^{2}}
\int_{B\big(u,\frac{r(1-|u|^{2})}{2(1+r)}\big)}\frac{1}{|1-\bar{z}\zeta|^{2c+t+1}}dA(\zeta)\nonumber\\
\lesssim&\sup_{u\in D(\xi_{j},5r)}\frac{1}{\kappa(u)(1-|u|)^{2}}
\int_{D(u,r)}\frac{\kappa(\zeta)(1-|\zeta|^{2})^{c-1}}{|1-\bar{z}\zeta|^{2c+t+1}}dA(\zeta)\nonumber\\
\lesssim&\frac{1}{\kappa(\xi_{j})(1-|\xi_{j}|)^{2}}
\int_{D(\xi_{j},6r)}\frac{\kappa(\zeta)(1-|\zeta|^{2})^{c-1}}{|1-\bar{z}\zeta|^{2c+t+1}}dA(\zeta)\nonumber\\
\asymp&\frac{1}{\widehat{\omega}(\xi_{j})^{\frac{1}{p}}(1-|\xi_{j}|)^{\frac{1}{p}}}
\int_{D(\xi_{j},6r)}\frac{\nu(\zeta)}{|1-\bar{z}\zeta|^{2c+t+1}}dA(\zeta).
\label{E:max}
\end{align}
For any $g\in\mathcal{B}$, it follows from Theorem 5.8 in \cite{zhu2007} or Proposition 2.4 in \cite{WL-2010} that for any distinct points $z$ and $u$ on $\mathbb{D}$,
\begin{equation}\label{E:bloch}
\frac{(1-|z|^{2})^{1-c}(1-|u|^{2})^{1-c}|g(z)-g(u)|}{|z-u||1-\bar{u}z|^{1-2c}}\lesssim\|g\|_{\mathcal{B}}.
\end{equation}
Let
\begin{equation}\label{E:ihat}
I(z)=\frac{(1-|z|^{2})^{s+1}}{\sigma(z)}J(z),
\end{equation}
where
\begin{equation}\label{QE:ihat}
J(z)=\int_{\mathbb{D}}\frac{|g(z)-g(u)|}{|1-\bar{u}z|^{t+3}}d\mu(u).
\end{equation}
From \eqref{E:bloch}, we obtain
\begin{align*}
J(z)
=&\frac{1}{(1-|z|^{2})^{1-c}}\int_{\mathbb{D}}\frac{(1-|z|^{2})^{1-c}
(1-|u|^{2})^{1-c}|g(z)-g(u)|}{|z-u||1-\bar{u}z|^{1-2c}}\frac{|z-u|(1-|u|^{2})^{c-1}}{|1-\bar{u}z|^{t+2c+2}}d\mu(u)\nonumber\\
\lesssim&\frac{\|g\|_{\mathcal{B}}}{(1-|z|^{2})^{1-c}}
\int_{\mathbb{D}}\frac{|z-u|}{|1-\bar{z}u|}\frac{(1-|u|^{2})^{c-1}}{|1-\bar{z}u|^{2c+t+1}}d\mu(u)\nonumber\\
<&\frac{\|g\|_{\mathcal{B}}}{(1-|z|^{2})^{1-c}}
\bigg(\sum_{j=1}^{N}\int_{D(\xi_{j},5r)}\frac{(1-|u|^{2})^{c-1}}{|1-\bar{z}u|^{2c+t+1}}d\mu(u)+
\sum_{j=N+1}^{\infty}\int_{D(\xi_{j},5r)}\frac{(1-|u|^{2})^{c-1}}{|1-\bar{z}u|^{2c+t+1}}d\mu(u)\bigg).
\end{align*}
Combining \eqref{E:unr} with \eqref{E:max}, we know
\begin{equation*}\label{QQE:ihat}
\sum_{j=1}^{N}\int_{D(\xi_{j},5r)}\frac{(1-|u|^{2})^{c-1}}{|1-\bar{z}u|^{2c+t+1}}d\mu(u)\leq
\int_{r_{0}\mathbb{D}}\frac{d\mu(u)}{(1-|u|)^{c+t+2}}
\end{equation*}
and
\begin{align*}\label{QQQE:ihat}
\sum_{j=N+1}^{\infty}\int_{D(\xi_{j},5r)}\frac{(1-|u|^{2})^{c-1}}{|1-\bar{z}u|^{2c+t+1}}d\mu(u)\leq&
\sum_{j=N+1}^{\infty}\mu(D(\xi_{j},5r))\sup_{u\in D(\xi_{j},5r)}\frac{(1-|u|^{2})^{c-1}}{|1-\bar{z}u|^{2c+t+1}}\nonumber\\
\lesssim&\sum_{j=N+1}^{\infty}\frac{\mu(D(\xi_{j},5r))}{\widehat{\omega}(\xi_{j})^{\frac{1}{p}}(1-|\xi_{j}|)^{\frac{1}{p}}}
\int_{D(\xi_{j},6r)}\frac{\nu(\zeta)dA(\zeta)}{|1-\bar{z}\zeta|^{2c+t+1}}.
\end{align*}
Together with \eqref{kappa}, \eqref{E:ihat} and \eqref{QE:ihat}, we have
\begin{align*}
I(z)
\lesssim&\|g\|_{\mathcal{B}}\bigg[\frac{\mu(r_{0}\mathbb{D})(1-|z|^{2})^{t-t_{0}+c}}{(1-r_{0})^{c+t+2}}+
\frac{(1-|z|^{2})^{s+c}}{\sigma(z)}
\sum_{j=N+1}^{\infty}\frac{\mu(D(\xi_{j},5r))}{\widehat{\omega}(\xi_{j})^{\frac{1}{p}}(1-|\xi_{j}|)^{\frac{1}{p}}}
\int_{D(\xi_{j},6r)}\frac{\nu(\zeta)dA(\zeta)}{|1-\bar{z}\zeta|^{2c+t+1}}\bigg].
\end{align*}
From Proposition \ref{P:regular}, we obtain that $\mu$ is a $\frac{1}{p}$-Carleson measure for $L_{a}^{1}(\omega)$.
By Lemma \ref{P:D} and \eqref{PR03}, we get
\begin{align*}
I(z)
\lesssim&\|g\|_{\mathcal{B}}\frac{\mu(r_{0}\mathbb{D})(1-|z|^{2})^{t-t_{0}+c}}{(1-r_{0})^{c+t+2}}+
\|g\|_{\mathcal{B}}\frac{(1-|z|^{2})^{s+c}}{\sigma(z)}
\int_{\mathbb{D}}\frac{\nu(\zeta)}{|1-\bar{z}\zeta|^{2c+t+1}}dA(\zeta)\nonumber\\
\lesssim&\|g\|_{\mathcal{B}}\frac{\mu(r_{0}\mathbb{D})}{(1-r_{0})^{c+t+2}}+
\|g\|_{\mathcal{B}}\frac{(1-|z|^{2})^{s+c}}{\sigma(z)}
\frac{\widehat{\nu}(z)}{(1-|z|)^{2c+t}}.
\end{align*}
In the case $0<p<1$, for each $z\in\mathbb{D}$, we obtain
\[
\frac{(1-|z|^{2})^{s+c}}{\sigma(z)}\frac{\widehat{\nu}(z)}{(1-|z|)^{2c+t}}\leq
\frac{(1-|z|^{2})^{s+c}}{\sigma(z)}\frac{\widehat{\omega}(z)^{\frac{1}{p}}\int_{|z|}^{1}(1-\rho)^{c+\frac{1}{p}-3}d\rho}{(1-|z|)^{2c+t}}
\asymp1.
\]
In the case $p=1$, from Corollary \ref{choose of c1}, we know that $\nu$ is regular. Then for each $z\in\mathbb{D}$, we get
\[
\frac{(1-|z|^{2})^{s+c}}{\sigma(z)}\frac{\widehat{\nu}(z)}{(1-|z|)^{2c+t}}
\asymp\frac{(1-|z|^{2})^{s+c}}{\sigma(z)}\frac{\nu(z)(1-|z|)}{(1-|z|)^{2c+t}}\asymp1.
\]
Therefore, for $0<p\leq1$, we conclude that
\begin{equation}\label{PR05}
I(z)\lesssim \|g\|_{\mathcal{B}},\quad \textmd{for~any}~z\in\mathbb{D}.
\end{equation}
Note that
\[\frac{(1-|z|^{2})^{s+1}}{\sigma(z)}\overline{g(z)T_{\mu,t+1}(1)(z)}=\langle T_{\mu,\omega}(h_{z}),g\rangle_{L^{2}(\omega)}
+\frac{(1-|z|^{2})^{s+1}}{\sigma(z)}\int_{\mathbb{D}}\frac{\overline{g(z)}-\overline{g(u)}}{(1-u\bar{z})^{t+3}}d\mu(u).
\]
Together with (\ref{PR04}), \eqref{E:ihat}, \eqref{QE:ihat} and (\ref{PR05}), for any $g\in\mathcal{B}$ and $z\in\mathbb{D}$, we obtain
\begin{equation}\label{PR06}
\frac{(1-|z|^{2})^{s+1}}{\sigma(z)}|g(z)T_{\mu,t+1}(1)(z)|\lesssim \|g\|_{\mathcal{B}}.
\end{equation}
Without loss of generality, assume that $g(0)=0$. Take the supremum over all $g\in\mathcal{B}$ with $\|g\|_\mathcal{B}\leq1$, by Theorem 5.7 in \cite{zhu2007} and (\ref{PR06}), we have
\begin{equation}\label{PR07}
\frac{(1-|z|^{2})^{s+1}}{\sigma(z)}\log\bigg(\frac{2}{1-|z|^{2}}\bigg)|T_{\mu,t+1}(1)(z)|\lesssim 1.
\end{equation}
Since $\mu$ is a $\frac{1}{p}$-Carleson measure for $L_{a}^{1}(\omega)$, for any $z\in\mathbb{D}$, we deduce from (\ref{PR02}) that
\begin{align}\label{PR08}
\frac{(1-|z|^{2})^{s+1}}{\sigma(z)}\log\bigg(\frac{2}{1-|z|^{2}}\bigg)|T_{\mu,t}(1)(z)|\leq&
\frac{(1-|z|^{2})^{s+1}}{\sigma(z)}\log\bigg(\frac{2}{1-|z|^{2}}\bigg)\int_{\mathbb{D}}\frac{1}{|1-\bar{z}u|^{t+2}}d\mu(u)
\nonumber\\\lesssim&\frac{(1-|z|^{2})^{s+1}}{\sigma(z)}\log\bigg(\frac{2}{1-|z|^{2}}\bigg)
\bigg(\int_{\mathbb{D}}\frac{\omega(u)}{|1-\bar{z}u|^{pt+2p}}dA(u)\bigg)^{\frac{1}{p}}
\nonumber\\\asymp&\frac{(1-|z|^{2})^{s+1}}{\sigma(z)}\log\bigg(\frac{2}{1-|z|^{2}}\bigg)
\frac{\widehat{\omega}(z)^{\frac{1}{p}}}{(1-|z|)^{t+2-\frac{1}{p}}}\nonumber\\
\lesssim&1.
\end{align}
Notice that
\begin{equation}\label{PR09}
(t+2)T_{\mu,t+1}(1)(z)=(t+2)T_{\mu,t}(1)(z)+z(T_{\mu,t}1)'(z).
\end{equation}
Combining (\ref{PR07}), (\ref{PR08}) with (\ref{PR09}), we conclude that $\|T_{\mu,t}(1)\|_{\mathcal{LB}^{s}_{\sigma}}<\infty$, which completes the proof.
\end{proof}

\vskip 0.1in

Next, we are in a position to complete the proof of Theorem \ref{T:regular}.
\begin{proof}[Proof of Theorem \ref{T:regular}]
By Proposition \ref{P:regular} and Proposition \ref{P1:regular}, it remains to show the sufficiency part.
From Corollary \ref{coro-Lemma}, there exist constants $C_{1}\in(0,1)$ and $C_{2}>1$ such that for any $C_{3}\in(0,C_{1}]$,
\begin{equation}\label{kappa1}
\kappa(r)\geq C_{2}^{-1}\widehat{\kappa}(0)(1-r)^{\frac{1}{C_{3}}-1}, \quad r\in[0,1),
\end{equation}
where
$
\kappa(z)=(1-|z|)^{-1}\widehat{\omega}(z).
$
Moreover, there exists a constant $C_4>1$ such that
\[
\int_{r}^{1}\omega(\rho)d\rho \leq C_{4}\int_{\frac{1+r}{2}}^{1}\omega(\rho)d\rho,\quad  r\in[0,1).
\]
By \eqref{E:nu}, Corollary \ref{choose of cp} and Corollary \ref{choose of c1}, there exists a constant $C_{5}>1$ such that the weight
\[
\nu(z)=\widehat{\omega}(z)^{\frac{1}{p}}(1-|z|)^{c+\frac{1}{p}-3}
\]
on $\mathbb{D}$ satisfies
\[
\int_{r}^{1}\nu(\rho)d\rho \leq C_{5}\int_{\frac{1+r}{2}}^{1}\nu(\rho)d\rho,
\quad r\in[0,1).
\]
By the hypothesis, there exists $t_{0}>0$ such that for each $t\geq t_{0}$,
\[
\|T_{\mu,t}(1)\|_{\mathcal{LB}^{s}_{\sigma}}<\infty,
\]
where $s=t-\frac{1}{p}+2$ and $\sigma^p(z)=\widehat{\omega}(z)$. Moreover, take a constant $C_{6}\in (0,C_1]$ such that
\[
t_1:=\frac{C_{6}+1}{pC_{6}}-2>\textmd{max}\bigg\{p^{-1}\log_{2}C_{4}+p^{-1}-2,~\log_{2}C_{5}-2c\bigg\}.
\]
Let
$$
t_{2}=\max\{t_{0},t_{1}\}.
$$
Next let $t\geq t_{2}$. According to Remark \ref{rema-doubling}, we have
\begin{equation}\label{R02}
\int_{\mathbb{D}}\frac{\omega(z)}{|1-\bar{z}\zeta|^{pt+2p}}dA(z) \asymp\frac{\widehat{\omega}(\zeta)}{(1-|\zeta|)^{pt+2p-1}}, \quad \zeta\in\mathbb{D}
\end{equation}
and
\begin{equation}\label{R03}
\int_{\mathbb{D}}\frac{\nu(z)}{|1-\bar{z}\zeta|^{2c+t+1}}dA(z) \asymp\frac{\widehat{\nu}(\zeta)}{(1-|\zeta|)^{2c+t}}, \quad \zeta\in\mathbb{D}.
\end{equation}
From Lemma \ref{L:bloch},
we know
\[
\|T_{\mu,\omega}(f)\|_{L_{a}^{1}(\omega)}=\sup_{\{g\in\mathcal{B}:\|g\|_{\mathcal{B}}=1\}}|\langle T_{\mu,\omega}(f),g \rangle_{L^{2}(\omega)}|.
\]
It remains to show that for any $f\in L^p_a(\omega)$ and $g\in\mathcal{B}$,
\begin{equation*}\label{newnew3}
|\langle T_{\mu,\omega}(f), g\rangle_{L^{2}(\omega)}|\lesssim\|f\|_{L_{a}^{p}(\omega)}\|g\|_{\mathcal{B}}.
\end{equation*}

It follows from Lemma \ref{prop-reverse} and \eqref{kappa1} that
\begin{equation}\label{E:NEW}
L_{a}^{p}(\omega)\subseteq L_{a}^{1}(dA_{t+1}).
\end{equation}
Let $P_{t+1}$ be the orthogonal projection from $L^2(dA_{t+1})$ to the weighted Bergman space $L^2_a(dA_{t+1})$. For any $f\in L^p_a(\omega)$ and $g\in\mathcal{B}$, we have
\begin{align}\label{E:NEW1}
\langle T_{\mu,\omega}(f), g\rangle_{L^{2}(\omega)}=&\int_{\mathbb{D}}f(z)\overline{g(z)}d\mu(z)\nonumber\\
=&\underbrace{\int_{\mathbb{D}}P_{t+1}(f\bar{g})(z)d\mu(z)}_{I_1}+
\underbrace{\bigg(\int_{\mathbb{D}}(f\bar{g})(z)d\mu(z)-\int_{\mathbb{D}}P_{t+1}(f\bar{g})(z)d\mu(z)\bigg)}_{I_2}.
\end{align}
From Lemma \ref{prop-reverse} and Proposition \ref{la45}, for any $f\in L_{a}^{p}(\omega)$ and $r\in(0,1)$, we obtain
\begin{align}\label{pro-sub}
|f(\zeta)|^{p}\leq&\frac{1}{\big(\frac{r(1-|\zeta|^{2})}{2(1+r)}\big)^{2}}
\int_{B\big(\zeta,\frac{r(1-|\zeta|^{2})}{2(1+r)}\big)}|f(u)|^{p}dA(u)\nonumber\\
\lesssim&\frac{1}{\kappa(\zeta)(1-|\zeta|^{2})^{2}}\int_{D(\zeta,r)}|f(u)|^{p}\kappa(u)dA(u)\nonumber\\
\lesssim&\frac{\|f\|_{L_{a}^{p}(\omega)}^{p}}{\widehat{\omega}(\zeta)(1-|\zeta|)},\quad \zeta\in\mathbb{D}.
\end{align}
Recall that $s=t-\frac{1}{p}+2$ and $\sigma^p(z)=\widehat{\omega}(z)$. According to \eqref{pro-sub}, we deduce
\begin{align}\label{new1}
|I_{1}|
\leq&\int_{\mathbb{D}}|f(\zeta)||g(\zeta)|
\cdot\bigg|\int_{\mathbb{D}}K_{z}^{t+1}(\zeta)d\mu(z)\bigg|dA_{t+1}(\zeta)\nonumber\\
=&\int_{\mathbb{D}}|f(\zeta)|^{p}|f(\zeta)|^{1-p}|g(\zeta)|
\cdot\bigg|\int_{\mathbb{D}}K_{z}^{t+1}(\zeta)d\mu(z)\bigg|dA_{t+1}(\zeta)\nonumber\\
\lesssim&\|f\|_{L_{a}^{p}(\omega)}^{1-p}\int_{\mathbb{D}}|f(\zeta)|^{p}\kappa(\zeta)
\underbrace
{\frac{(1-|\zeta|^{2})^{s+1}}{\sigma(\zeta)}|g(\zeta)|\bigg|
\int_{\mathbb{D}}K_{z}^{t+1}(\zeta)d\mu(z)\bigg|}_{M(\zeta)}dA(\zeta),
\end{align}
where
\[
K_{z}^{t+1}(\zeta)=\frac{1}{(1-\bar{z}\zeta)^{t+3}}
\]
is the reproducing kernel of $L_{a}^2(dA_{t+1})$. Rewrite
\begin{equation}\label{CRR2}
M(\zeta)
=\frac{(1-|\zeta|^{2})^{s+1}}{\sigma(\zeta)}|g(\zeta)|\bigg|
\int_{\mathbb{D}}\frac{1-\bar{z}\zeta}{(1-\bar{z}\zeta)^{t+3}}d\mu(z)+
\int_{\mathbb{D}}\frac{\bar{z}\zeta}{(1-\bar{z}\zeta)^{t+3}}d\mu(z)\bigg|.
\end{equation}
Since $\|T_{\mu,t}(1)\|_{\mathcal{LB}^{s}_{\sigma}}<\infty$ and $\mu$ is a $\frac{1}{p}$-Carleson measure for $L_{a}^{1}(\omega)$, by Lemma 2.1 in \cite{WL-2010} and (\ref{R02}), for any $\zeta\in\mathbb{D}$, we have
\begin{align*}
M(\zeta)
\lesssim&|g(\zeta)|\frac{(1-|\zeta|^{2})^{s+1}}{\sigma(\zeta)}
\bigg(\int_{\mathbb{D}}\frac{\omega(z)}{|1-z\bar{\zeta}|^{pt+2p}}dA(z)\bigg)^{\frac{1}{p}}
+|g(\zeta)|\frac{(1-|\zeta|^{2})^{s+1}}{\sigma(\zeta)}|(T_{\mu,t}1)'(\zeta)|\nonumber\\
\lesssim&\log\bigg(\frac{2}{1-|\zeta|^{2}}\bigg)\|g\|_{\mathcal{B}}\frac{(1-|\zeta|^{2})^{s+1}}{\sigma(\zeta)}
\frac{\widehat{\omega}(\zeta)^{\frac{1}{p}}}{(1-|\zeta|)^{t+2-\frac{1}{p}}}\nonumber\\
+&\log\bigg(\frac{2}{1-|\zeta|^{2}}\bigg)\|g\|_{\mathcal{B}}\frac{(1-|\zeta|^{2})^{s+1}}{\sigma(\zeta)}
|(T_{\mu,t}1)'(\zeta)|\nonumber\\
\lesssim&\|g\|_{\mathcal{B}}+\|g\|_{\mathcal{B}}\|T_{\mu,t}(1)\|_{\mathcal{LB}^{s}_{\sigma}}
\lesssim \|g\|_{\mathcal{B}}.
\end{align*}
Together with Lemma \ref{prop-reverse} and \eqref{new1}, we conclude that
\begin{equation}\label{E:I1}
|I_{1}|\lesssim \|f\|_{L_{a}^{p}(\omega)}\|g\|_{\mathcal{B}}.
\end{equation}

By \eqref{E:NEW} and \eqref{pro-sub}, we deduce
\begin{align}\label{new3}
|I_2|\leq&\int_{\mathbb{D}}|f(\zeta)|\int_{\mathbb{D}}|g(z)-g(\zeta)||K_{z}^{t+1}(\zeta)|d\mu(z)dA_{t+1}(\zeta)
\nonumber\\
=&\int_{\mathbb{D}}|f(\zeta)|^{p}|f(\zeta)|^{1-p}
\int_{\mathbb{D}}|g(z)-g(\zeta)||K_{z}^{t+1}(\zeta)|d\mu(z)dA_{t+1}(\zeta)\nonumber\\
\lesssim&\|f\|_{L_{a}^{p}(\omega)}^{1-p}\int_{\mathbb{D}}|f(\zeta)|^{p}\kappa(\zeta)\underbrace{\frac{(1-|\zeta|^{2})^{s+1}}
{\sigma(\zeta)}\int_{\mathbb{D}}\frac{|g(z)-g(\zeta)|}{|1-\bar{z}\zeta|^{t+3}}d\mu(z)}_{N(\zeta)}dA(\zeta).
\end{align}
Using a similar argument as in \eqref{PR05}, we obtain
\[
N(\zeta)\lesssim \|g\|_{\mathcal{B}}, \quad \textmd{for~any}~\zeta\in\mathbb{D}.
\]
It follows from Lemma \ref{prop-reverse} that
\begin{equation}\label{E:I2}
|I_{2}|\lesssim \|f\|_{L_{a}^{p}(\omega)}\|g\|_{\mathcal{B}}.
\end{equation}
Combining \eqref{E:NEW1}, \eqref{E:I1} with \eqref{E:I2}, we conclude that $T_{\mu,\omega}:L_{a}^{p}(\omega)\to L_{a}^{1}(\omega)$ is bounded,
which completes the whole proof.
\end{proof}

\section{Compactness of Toeplitz operators $T_{\mu,\omega}$ on $L_{a}^{p}(\omega)$}
In this section, we study the compact Toeplitz operators between $L^p_a(\omega)$ and $L^1_a(\omega)$ for $0<p\leq1$ and a $\mathcal{D}$ weight $\omega$. For $0<p,q<\infty$, the Toeplitz operator $T_{\mu,\omega}:L_{a}^{p}(\omega)\rightarrow L_{a}^{q}(\omega)$ is compact if and only if
\[
\lim\limits_{j\rightarrow\infty}\|T_{\mu,\omega}(f_{j})\|_{L_{a}^{q}(\omega)}=0,
\]
whenever $\{f_{j}\}_{j=1}^{\infty}$ is a bounded sequence in $L_{a}^{p}(\omega)$ such that $f_{j}$ uniformly converges to 0 on each compact subset of $\mathbb{D}$.

We also need the following geometrical characterization of vanishing $q$-Carleson measure for $L_a^p(\omega)$.

\begin{lemma}\label{P:D1} Let $\mu$ be a positive Borel measure, $\omega$ be a $\mathcal{D}$ weight and $0<p\leq q<\infty$. Then $\mu$ is a vanishing $q$-Carleson measure for $L_{a}^{p}(\omega)$ if and only if
\[
\lim_{|z|\rightarrow1^{-}}\frac{\mu(D(z,r))}{\widehat{\omega}(z)^{\frac{q}{p}}(1-|z|)^{\frac{q}{p}}}=0,\quad\textmd{for~some}~r\in(0,1).
\]
\end{lemma}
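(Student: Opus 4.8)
The plan is to establish both implications separately, treating this as the vanishing counterpart of Lemma \ref{P:D}. Throughout I would use the standard sequential characterization of compactness of the embedding $Id:L_a^p(\omega)\to L^q(\mu)$ (entirely analogous to the Toeplitz criterion recorded at the opening of this section): $\mu$ is a vanishing $q$-Carleson measure for $L_a^p(\omega)$ if and only if $\|f_j\|_{L^q(\mu)}\to 0$ whenever $\{f_j\}$ is bounded in $L_a^p(\omega)$ and converges to $0$ uniformly on compact subsets of $\mathbb{D}$. I would also record at the start the (standard) fact that the limit condition is independent of the radius $r\in(0,1)$: covering a disk $D(z,R)$ by a fixed number $N=N(R,r)$ of disks $D(w_i,r)$ whose centers satisfy $\widehat{\omega}(w_i)\asymp\widehat{\omega}(z)$ and $1-|w_i|\asymp 1-|z|$ (via Proposition \ref{la45} applied to the regular weight $\kappa$ of Lemma \ref{prop-reverse}, together with $\widehat{\omega}=(1-|z|)\kappa$) lets one pass freely between radii. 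This lets me use the lattice radius $5r$ later.

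For necessity, assume $Id$ is compact. For $a\in\mathbb{D}$ and a large fixed $\gamma=t+2$ (with $t\ge t_0$ as in Remark \ref{rema-doubling}) I would introduce the normalized test functions
\[
F_a(\zeta)=\frac{(1-|a|^2)^{\gamma}}{(1-\bar a\zeta)^{\gamma}}\,\frac{1}{\big(\widehat{\omega}(a)(1-|a|)\big)^{1/p}},\qquad \zeta\in\mathbb{D}.
\]
Using the integral estimate of Remark \ref{rema-doubling} with exponent $pt+2p=\gamma p$ one checks $\|F_a\|_{L_a^p(\omega)}^p\asymp 1$, and since by Corollary \ref{coro-Lemma} one has the polynomial lower bound $\widehat{\omega}(a)(1-|a|)=\kappa(a)(1-|a|)^2\gtrsim(1-|a|)^{1/C_3+1}$, choosing $\gamma$ large enough forces $F_a\to 0$ uniformly on compact subsets as $|a|\to1^-$. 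Compactness then gives $\|F_a\|_{L^q(\mu)}\to 0$. On the other hand, on $D(a,r)$ one has $|1-\bar a\zeta|\asymp 1-|a|$, hence $|F_a(\zeta)|^q\asymp\big(\widehat{\omega}(a)(1-|a|)\big)^{-q/p}$, and therefore
\[
\frac{\mu(D(a,r))}{\widehat{\omega}(a)^{q/p}(1-|a|)^{q/p}}\lesssim\int_{D(a,r)}|F_a|^q\,d\mu\le\|F_a\|_{L^q(\mu)}^q\xrightarrow[|a|\to1^-]{}0,
\]
which is exactly the limit condition (after replacing $1-|a|$ by $1-|a|^2$).

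For sufficiency, assume the limit condition; by Lemma \ref{P:D} it implies the sup-condition, so $\mu$ is a bounded Carleson measure and in particular finite. Let $\{f_j\}$ be bounded in $L_a^p(\omega)$ with $f_j\to0$ uniformly on compacts. Fix an $r$-lattice $\{z_k\}$ with bounded overlap $N$ and $\mathbb{D}=\bigcup_kD(z_k,5r)$. The engine is the sub-mean value estimate underlying \eqref{pro-sub}: for $\zeta\in D(z_k,5r)$, subharmonicity of $|f_j|^p$ together with Proposition \ref{la45} yields $\sup_{D(z_k,5r)}|f_j|^p\lesssim\frac{1}{\widehat{\omega}(z_k)(1-|z_k|)}\int_{D(z_k,6r)}|f_j|^p\kappa\,dA$. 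Given $\varepsilon>0$, choose $\rho$ with $\mu(D(z,5r))\le\varepsilon\,\big(\widehat{\omega}(z)(1-|z|)\big)^{q/p}$ for $|z|>\rho$ (radius-independence). Splitting the integral at $|\zeta|=\rho$, the inner part $\int_{|\zeta|\le\rho}|f_j|^q\,d\mu\to0$ as $j\to\infty$ because $\mu$ is finite and $f_j\to0$ uniformly there; for the boundary part,
\[
\int_{|\zeta|>\rho}|f_j|^q\,d\mu\le\sum_{|z_k|>\rho}\mu(D(z_k,5r))\sup_{D(z_k,5r)}|f_j|^q
\lesssim\varepsilon\sum_{k}\Big(\int_{D(z_k,6r)}|f_j|^p\kappa\,dA\Big)^{q/p}.
\]
Since $q/p\ge1$, superadditivity $\sum_k a_k^{q/p}\le(\sum_k a_k)^{q/p}$, bounded overlap, and Lemma \ref{prop-reverse} ($\|f_j\|_{L^p(\kappa)}\asymp\|f_j\|_{L^p(\omega)}$) bound the last sum by $\big(N\|f_j\|_{L^p(\omega)}^p\big)^{q/p}\lesssim\|f_j\|_{L_a^p(\omega)}^q$. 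Hence $\limsup_j\int_{\mathbb{D}}|f_j|^q\,d\mu\lesssim\varepsilon$, and letting $\varepsilon\to0$ gives compactness.

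I expect the sufficiency direction to be the main obstacle, specifically controlling the near-boundary tail \emph{uniformly in $j$}: one must combine the weighted sub-mean value inequality, the $q\ge p$ superadditivity trick, and the bounded-overlap property of the lattice so that the entire boundary contribution is absorbed into $\varepsilon\|f_j\|_{L_a^p(\omega)}^q$ rather than merely into something that decays in $j$. The necessity direction is comparatively routine once the correct normalization and the uniform-on-compacts decay of $F_a$ (which uses the polynomial lower bound from Corollary \ref{coro-Lemma}) are in place.
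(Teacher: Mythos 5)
Your proof is correct in all essentials, but be aware that the paper itself never proves this lemma: it is recorded without proof as a known geometric characterization --- the vanishing companion of Lemma~\ref{P:D}, which the authors quote from \cite{PR00} --- so there is no in-paper argument to compare against. What you have written is a sound, self-contained reconstruction that fits the paper's toolkit exactly: for necessity, the test functions $F_a$ normalized via Remark~\ref{rema-doubling}, with Corollary~\ref{coro-Lemma} supplying the polynomial lower bound on $\widehat{\omega}(a)(1-|a|)$ that forces uniform decay on compacta once $\gamma$ is large (this is precisely how one sidesteps the lack of an explicit formula for $K_z^\omega$), and the two-sided estimate $|1-\bar a\zeta|\asymp 1-|a|$ on $D(a,r)$; for sufficiency, the lattice decomposition, the weighted sub-mean-value inequality underlying \eqref{pro-sub} (with Proposition~\ref{la45} and Lemma~\ref{prop-reverse} converting between $\omega$, $\kappa$ and $\widehat{\omega}(z_k)(1-|z_k|)$), the superadditivity $\sum_k a_k^{q/p}\le\bigl(\sum_k a_k\bigr)^{q/p}$ for $q/p\ge 1$, and bounded overlap. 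The structure (inner compact part handled by uniform convergence and finiteness of $\mu$, boundary part absorbed into $\varepsilon\|f_j\|_{L_a^p(\omega)}^q$ uniformly in $j$) is the standard and correct one.

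Two small points to tighten, neither of which affects validity. First, in the boundary sum you should index over those $k$ with $D(z_k,5r)\cap\{|\zeta|>\rho\}\neq\emptyset$; such centers satisfy $1-|z_k|\lesssim 1-\rho$ rather than $|z_k|>\rho$, so the radius $\rho$ at which you invoke the $\varepsilon$-smallness of $\mu(D(z,5r))/\bigl(\widehat{\omega}(z)(1-|z|)\bigr)^{q/p}$ must be chosen with this (harmless) enlargement in mind; you also need $r$ small enough (say $6r<1$) for the inclusion $D(\zeta,r)\subseteq D(z_k,6r)$, which your radius-independence step licenses. Second, the sequential criterion you invoke deserves one sentence of justification in the direction ``criterion implies compactness'': the pointwise bound \eqref{pro-sub} shows bounded subsets of $L_a^p(\omega)$ are locally uniformly bounded, so Montel's theorem plus Fatou's lemma produce a limit function in $L_a^p(\omega)$ to which a subsequence converges in $L^q(\mu)$; this is the same convention the paper adopts, also without proof, for Toeplitz operators at the start of Section~3.
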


%\begin{theorem}\label{RC}
%Let $\mu$ be a positive Borel measure, $\omega\in\mathcal{D}$ and $0<p\leq1$. Then the Toeplitz operator $T_{\mu,\omega}: L^{p}_{a}(\omega)\rightarrow L^{1}_{a}(\omega)$ is compact if and only if $\mu$ is a vanishing $\frac{1}{p}$-Carleson measure for $L^{1}_{a}(\omega)$ and there exists a positive constant $t_{0}=t_{0}(\omega)$ such that for each $t\geq t_{0}$, $T_{\mu,t}(1)\in\mathcal{LB}_{0}^{t',\widehat{\omega}^{1/p}}$, where $t'=t-\frac{1}{p}+2$.
%\end{theorem}

In the following, we proceed to establish the corresponding result for the compactness of $T_{\mu,\omega}$
by modifying the proofs given in Theorem \ref{T:regular}.

\begin{proof}[Proof of Theorem \ref{RC}] Necessity.
Let $0<p\leq 1$. Consider a real number $\alpha$ with $\alpha p>1$ and let $\beta=\alpha+1-\frac{1}{p}$. For $z\in\mathbb{D}$, let
\[
f_{z}(\zeta)=\frac{(K_{z}^{\omega}(\zeta))^{\alpha}}{\|K_{z}^{\omega}\|^{\beta}_{L^{\beta}(\omega)}} ~\text{ and }~ g_{z}(\zeta)=\frac{(K_{z}^{\omega}(\zeta))^{\alpha}}{\|K_{z}^{\omega}\|^{\alpha+1}_{L^{\alpha+1}(\omega)}}, \quad \zeta\in\mathbb{D}.
\]
From \cite[Formula (20)]{PR1}, we have
\[
|K_{z}^{\omega}(\zeta)|\leq\sum_{n=0}^{\infty}\frac{|\zeta|^{n}}{2\omega_{2n+1}}
\asymp\frac{1}{\omega_{1}}+\int_{0}^{|\zeta|}\frac{1}{\widehat{\omega}(t)(1-t)^{2}}dt\lesssim\frac{1}{\widehat{\omega}(\zeta)(1-|\zeta|)},\quad |\zeta|\to 1^{-},
\]
where $\omega_{2n+1}=\int_{0}^{1}\omega(r)r^{2n+1}dr$.
Then we obtain
\[
|f_{z}(\zeta)|\asymp \widehat{\omega}(z)^{\beta-1}(1-|z|)^{\beta-1}|K_{z}^{\omega}(\zeta)|^{\alpha}
\lesssim\frac{\widehat{\omega}(z)^{\beta-1}(1-|z|)^{\beta-1}}{\widehat{\omega}(\zeta)^{\alpha}(1-|\zeta|)^{\alpha}},\quad \zeta\in\mathbb{D}.
\]
Hence, for any compact subset $E\subset\mathbb{D}$, it holds that
\[
\lim_{|z|\to 1^-}\sup_{\zeta\in E}|f_{z}(\zeta)|=0.
\]
Moreover, by \eqref{rp1}, we get
\[
\sup_{z\in\mathbb{D}}\|f_z\|_{L^p(\omega)}<\infty.
\]
Then
\begin{equation}\label{E:vanish}
\lim\limits_{|z|\to 1^-}\|T_{\mu,\omega}(f_{z})\|_{L_{a}^{1}(\omega)}=0.
\end{equation}
According to \eqref{rp3}, there exist constants
$C_{1}>0$ and $r\in (0,1)$ such that for any $z\in\mathbb{D}$,
\[
\frac{\mu(D(z,r))}{\widehat{\omega}(z)^{\frac{1}{p}}(1-|z|)^{\frac{1}{p}}}\leq C_{1}
|\langle T_{\mu,\omega}(f_z),g_z\rangle_{L^{2}(\omega)}|.
\]
It follows from \eqref{rpnn}, \eqref{E:vanish} and Lemma \ref{P:D1} that $\mu$ is a vanishing $\frac{1}{p}$-Carleson measure for $L_a^1(\omega)$.

Let $\nu$ be the weight given by \eqref{E:nu}.
According to Corollary \ref{choose of cp}, Remark \ref{rema-doubling} and Corollary \ref{choose of c1}, we can choose a positive constant $C_{2}$ such that for each
\[
\gamma\geq\frac{C_{2}+1}{pC_{2}}-2>0,
\]
we have
\begin{equation}\label{PR02-1}
\int_{\mathbb{D}}\frac{\omega(z)}{|1-\bar{z}\zeta|^{p\gamma+2p}}dA(z) \asymp\frac{\widehat{\omega}(\zeta)}{(1-|\zeta|)^{p\gamma+2p-1}}, \quad \zeta\in\mathbb{D}
\end{equation}
and
\begin{equation}\label{PR03-1}
\int_{\mathbb{D}}\frac{\nu(z)}{|1-\bar{z}\zeta|^{2c+\gamma+1}}dA(z) \asymp\frac{\widehat{\nu}(\zeta)}{(1-|\zeta|)^{2c+\gamma}}, \quad \zeta\in\mathbb{D}.
\end{equation}
Let
$
\kappa(z)=(1-|z|)^{-1}\widehat{\omega}(z).
$
It follows from Corollary \ref{coro-Lemma} that there exist constants $C_{3}\in(0,1)$ and $C_{4}>1$ such that for any constant $C_{5}\in(0,C_{3}]$,
\begin{equation}\label{kappa2}
\kappa(r)\geq C_{4}^{-1}\widehat{\kappa}(0)(1-r)^{\frac{1}{C_{5}}-1}, \quad r\in[0,1).
\end{equation}
Choose a positive constant $C_{6}$ with $C_{6}<\min\{C_{2},C_{3}\}$ and let
\begin{equation}\label{C6}
t_{0}=\frac{C_{6}+1}{pC_{6}}-2.
\end{equation}
For each $t\geq t_{0}$ and $z\in\mathbb{D}$, let
\[
h_{z}(\zeta)=\frac{(1-|z|^{2})^{s+1}}{\sigma(z)}
\frac{1}{(1-\zeta\bar{z})^{t+3}}, \quad \zeta\in\mathbb{D},
\]
where $s=t-\frac{1}{p}+2$ and $\sigma^p(z)=\widehat{\omega}(z)$.
With similar arguments as in \eqref{E:testh}, for $0<p\leq 1$, we have
\[
\|h_{z}\|_{L_{a}^{p}(\omega)}\asymp 1.
\]
Moreover, by \eqref{kappa2}, we obtain
\[
|h_{z}(\zeta)|\leq\frac{(1-|z|^{2})^{s+1}}{\sigma(z)}\frac{1}{(1-|\zeta|)^{t+3}}
\lesssim \frac{(1-|z|^{2})^{t-\frac{1}{p}+3}}{(1-|z|)^{\frac{1}{pC_{6}}}}\frac{1}{(1-|\zeta|)^{t+3}}\asymp\frac{(1-|z|^{2})^{t-t_{0}+1}}{(1-|\zeta|)^{t+3}}.
\]
Thus, for any compact subset $E\subset\mathbb{D}$, it holds that
\[
\lim_{|z|\to 1^-}\sup_{\zeta\in E}|h_{z}(\zeta)|=0.
\]
By the compactness of $T_{\mu,\omega}$, we have
\begin{equation}\label{RC01}
\lim\limits_{|z|\rightarrow1^{-}}\|T_{\mu,\omega}(h_{z})\|_{L_{a}^{1}(\omega)}=0.
\end{equation}

For a fixed $\delta\in(0,1)$, without loss of generality, we consider a $\delta$-lattice $\{\xi_{i}\}_{i=1}^{\infty}$ such that $\{|\xi_i|\}_{i=1}^\infty$ is an increasing sequence and
$
\lim\limits_{i\to \infty}|\xi_i|=1.
$
Note that $\mu$ is a vanishing $\frac{1}{p}$-Carleson measure for $L_{a}^{1}(\omega)$. From Lemma \ref{P:D1}, for any given $\varepsilon>0$, there exists $k_{1}\in\mathbb{N}$ such that for any $i>k_{1}$,
\begin{equation}\label{RC02}
\frac{\mu(D(\xi_{i},5\delta))}{\widehat{\omega}(\xi_{i})^{\frac{1}{p}}(1-|\xi_{i}|)^{\frac{1}{p}}}<\varepsilon.
\end{equation}
Notice that there exists $r_1\in(0,1)$ such that
\begin{equation}\label{RC02v}
\mathbb{D}\backslash r_1\mathbb{D}\subseteq\bigcup_{i=k_{1}+1}^{\infty}D(\xi_{i},5\delta).
\end{equation}
Let
\begin{equation*}\label{CE:ihat}
I(z)=\frac{(1-|z|^{2})^{s+1}}{\sigma(z)}\int_{\mathbb{D}}\frac{|g(z)-g(u)|}
{|1-u\bar{z}|^{t+3}}d\mu(u).
\end{equation*}
By \eqref{E:bloch}, we have
\begin{align*}\label{CE:ihat1}
I(z)
\lesssim&\|g\|_{\mathcal{B}}\frac{(1-|z|^{2})^{s+c}}{\sigma(z)}
\bigg(\int_{r_1\mathbb{D}}\frac{(1-|u|^{2})^{c-1}}{|1-\bar{z}u|^{2c+t+1}}d\mu(u)+
\int_{\mathbb{D}\backslash r_1\mathbb{D}}\frac{(1-|u|^{2})^{c-1}}{|1-\bar{z}u|^{2c+t+1}}d\mu(u)\bigg).
\end{align*}
From \eqref{kappa2}, we obtain
\begin{equation*}\label{CRD1}
\frac{(1-|z|^{2})^{s+c}}{\sigma(z)}\int_{r_1\mathbb{D}}\frac{(1-|u|^{2})^{c-1}}{|1-\bar{z}u|^{2c+t+1}}d\mu(u)
\lesssim\frac{(1-|z|^{2})^{s+c}}{(1-|z|^{2})^{\frac{1}{pC_{6}}}}\int_{r_1\mathbb{D}}\frac{d\mu(u)}{(1-|u|)^{c+t+2}}
\lesssim\frac{(1-|z|)^{t-t_{0}+c}\mu(r_1\mathbb{D})}{(1-r_1)^{c+t+2}}.
\end{equation*}
Using the same estimates as in \eqref{E:max}, for any $i>k_{1}$, we have
\begin{equation}\label{recall-1}
\sup_{u\in D(\xi_{i},5\delta)}\frac{(1-|u|^{2})^{c-1}}{|1-\bar{z}u|^{2c+t+1}}\lesssim
\frac{1}{\widehat{\omega}(\xi_{i})^{\frac{1}{p}}(1-|\xi_{i}|)^{\frac{1}{p}}}
\int_{D(\xi_{i},6\delta)}\frac{\nu(u)}{|1-\bar{z}u|^{2c+t+1}}dA(u).
\end{equation}
Combining \eqref{PR03-1}, \eqref{RC02}, \eqref{RC02v} with \eqref{recall-1}, for $z\in\mathbb{D}$, we get
\begin{align*}
\int_{\mathbb{D}\backslash r_1 \mathbb{D}}\frac{(1-|u|^{2})^{c-1}}{|1-\bar{z}u|^{2c+t+1}}d\mu(u)
\leq&\sum_{i=k_{1}+1}^{\infty}\mu(D(\xi_{i},5\delta))
\sup_{u\in D(\xi_{i},5\delta)}\frac{(1-|u|^{2})^{c-1}}{|1-\bar{z}u|^{2c+t+1}}\nonumber\\
\lesssim&\sum_{i=k_{1}+1}^{\infty}\frac{\mu(D(\xi_{i},5\delta))}{\widehat{\omega}(\xi_{i})^{\frac{1}{p}}(1-|\xi_{i}|)^{\frac{1}{p}}}
\int_{D(\xi_{i},6\delta)}\frac{\nu(u)}{|1-\bar{z}u|^{2c+t+1}}dA(u)\nonumber\\
\lesssim&\varepsilon\int_{\mathbb{D}}\frac{\nu(u)}{|1-\bar{z}u|^{2c+t+1}}dA(u)\nonumber\\
\asymp&\varepsilon\frac{\widehat{\nu}(z)}{(1-|z|)^{2c+t}}.
\end{align*}
Moreover, for $0<p\leq1$, we have
\begin{equation*}
\sup_{z\in\mathbb{D}}\frac{(1-|z|^{2})^{s+c}}{\sigma(z)}
\frac{\widehat{\nu}(z)}{(1-|z|)^{2c+t}}<+\infty.
\end{equation*}
Then we conclude that
\begin{equation}\label{RCCCC}
I(z)\lesssim\varepsilon\|g\|_{\mathcal{B}}, \quad \textmd{for}~|z|\geq 1-(1-r_{1})^{\frac{c+t+2}{t-t_{0}+c}}\mu(r_{1}\mathbb{D})^{-\frac{1}{t-t_{0}+c}}\varepsilon^{\frac{1}{t-t_{0}+c}}.
\end{equation}
Note that
\begin{equation*}\label{E:NEW3}
\langle T_{\mu,\omega}(h_{z}),g\rangle_{L^{2}(\omega)}=\frac{(1-|z|^{2})^{s+1}}{\sigma(z)}
\int_{\mathbb{D}}\frac{\overline{g(u)}}{(1-u\bar{z})^{t+3}}d\mu(u).
\end{equation*}
Then we get
\begin{equation*}
\frac{(1-|z|^{2})^{s+1}}{\sigma(z)}\overline{g(z)T_{\mu,t+1}(1)(z)}
=\langle T_{\mu,\omega}(h_{z}),g\rangle_{L^{2}(\omega)}
+\frac{(1-|z|^{2})^{s+1}}{\sigma(z)}\int_{\mathbb{D}}\frac{\overline{g(z)}-\overline{g(u)}}{(1-u\bar{z})^{t+3}}d\mu(u).
\end{equation*}
Together with \eqref{RC01}, \eqref{RCCCC} and Theorem 5.7 in \cite{zhu2007}, we obtain
\begin{equation}\label{RC05}
\lim_{|z|\rightarrow1^{-}}\frac{(1-|z|^{2})^{s+1}}{\sigma(z)}\log\bigg(\frac{2}{1-|z|^{2}}\bigg)|T_{\mu,t+1}(1)(z)|=0.
\end{equation}
Since $\mu$ is a vanishing $\frac{1}{p}$-Carleson measure for $L_{a}^{1}(\omega)$, by a similar calculation as in \eqref{PR08},
we obtain
\begin{equation}\label{RC06-}
\lim_{|z|\rightarrow1^{-}}\frac{(1-|z|^{2})^{s+1}}{\sigma(z)}\log\bigg(\frac{2}{1-|z|^{2}}\bigg)|T_{\mu,t}(1)(z)|=0.
\end{equation}
Combining \eqref{RC05}, \eqref{RC06-} with the following identity
\begin{equation*}
(t+2)T_{\mu,t+1}(1)(z)=(t+2)T_{\mu,t}(1)(z)+z(T_{\mu,t}1)'(z),
\end{equation*}
we conclude that
\[
\lim_{|z|\to 1^-}\frac{(1-|z|^{2})^{s+1}}{\sigma(z)}\log\bigg(\frac{2}{1-|z|^{2}}\bigg)|(T_{\mu,t}1)'(z)|=0.
\]

Sufficiency. From the hypothesis, we know there exists $t_{1}>0$ such that for each $t\geq t_{1}$,
\begin{equation}\label{C:condition}
\lim_{|z|\to 1^-}\frac{(1-|z|^{2})^{s+1}}{\sigma(z)}\log\bigg(\frac{2}{1-|z|^{2}}\bigg)|(T_{\mu,t}1)'(z)|=0,
\end{equation}
where $s=t-\frac{1}{p}+2$ and $\sigma^p(z)=\widehat{\omega}(z)$.
Let
\[
t_{2}=\max\bigg\{\frac{C_{6}+1}{pC_{6}}-2,t_{1}\bigg\},
\]
where $C_{6}$ is the constant in \eqref{C6}.
Next, let
\[
t\geq t_{2}.
\]
It follows from Lemma \ref{prop-reverse} and \eqref{kappa2} that
\begin{equation*}
L_{a}^{p}(\omega)\subseteq L_{a}^{1}(dA_{t+1}).
\end{equation*}
Let $\{f_{j}\}_{j=1}^{\infty}$ be a sequence in $L_{a}^{p}(\omega)$ such that $\|f_{j}\|_{L_{a}^{p}(\omega)}\leq1$ and $f_{j}$ uniformly converges to $0$ on any compact subset of $\mathbb{D}$.
From Lemma \ref{L:bloch}, for any $g\in\mathcal{B}$, we have
\begin{align}\label{decomposition}
\langle T_{\mu,\omega}(f_{j}), g\rangle_{L^{2}(\omega)}=&\int_{\mathbb{D}}f_{j}(z)\overline{g(z)}d\mu(z)\nonumber\\
=&\underbrace{\int_{\mathbb{D}}P_{t+1}(f_{j}\bar{g})(z)d\mu(z)}_{I_{1,j}}+
\underbrace{\bigg(\int_{\mathbb{D}}(f_{j}\bar{g})(z)d\mu(z)-\int_{\mathbb{D}}P_{t+1}(f_{j}\bar{g})(z)d\mu(z)\bigg)}_{I_{2,j}}.
\end{align}
By using arguments analogous to that used in \eqref{new1} and \eqref{CRR2}, together with \cite[Lemma 2.1]{WL-2010}, we get
\begin{align*}\label{new2}
|I_{1,j}|
\lesssim&\|g\|_{\mathcal{B}}\|f_{j}\|_{L_{a}^{p}(\omega)}^{1-p}
\underbrace{\int_{\mathbb{D}}|f_{j}(\zeta)|^{p}\kappa(\zeta)\frac{(1-|\zeta|^{2})^{s+1}}{\sigma(\zeta)}
\log\bigg(\frac{2}{1-|\zeta|^{2}}\bigg)\int_{\mathbb{D}}\frac{1}{|1-\bar{z}\zeta|^{t+2}}d\mu(z)dA(\zeta)}_{I_{1,j}^{1}}\nonumber\\
+&\|g\|_{\mathcal{B}}\|f_{j}\|_{L_{a}^{p}(\omega)}^{1-p}
\underbrace{\int_{\mathbb{D}}|f_{j}(\zeta)|^{p}\kappa(\zeta)\frac{(1-|\zeta|^{2})^{s+1}}{\sigma(\zeta)}
\log\bigg(\frac{2}{1-|\zeta|^{2}}\bigg)|(T_{\mu,t}1)'(\zeta)|dA(\zeta)}_{I_{1,j}^{2}}.
\end{align*}
Since $\mu$ is a vanishing $\frac{1}{p}$-Carleson measure for $L^{1}_{a}(\omega)$, combine \eqref{C:condition} with a similar argument as in \eqref{PR08}, for any given $\varepsilon>0$, there exists $r_{2}\in(0,1)$ such that for any $\zeta\in\mathbb{D}$ with $r_{2}\leq|\zeta|<1$,
\begin{equation}\label{C:NN3}
\frac{(1-|\zeta|^{2})^{s+1}}{\sigma(\zeta)}\log\bigg(\frac{2}{1-|\zeta|^{2}}\bigg)
\int_{\mathbb{D}}\frac{1}{|1-\bar{z}\zeta|^{t+2}}d\mu(z)<\varepsilon
\end{equation}
and
\begin{equation}\label{C:NN4}
\frac{(1-|\zeta|^{2})^{s+1}}{\sigma(\zeta)}\log\bigg(\frac{2}{1-|\zeta|^{2}}\bigg)|(T_{\mu,t}1)'(\zeta)|<\varepsilon.
\end{equation}
There also exists $k_{2}\in\mathbb{N}$ such that for any $j>k_{2}$, $|f_{j}|^{p}<\varepsilon$ on $r_{2}\mathbb{D}$.
Notice that $\mu$ is a $\frac{1}{p}$-Carleson measure for $L_{a}^{1}(\omega)$, together with \eqref{C:condition}, we get
\begin{equation}\label{C:NN1}
\sup_{\zeta\in \mathbb{D}}\bigg[\frac{(1-|\zeta|^{2})^{s+1}}
{\sigma(\zeta)}\log\bigg(\frac{2}{1-|\zeta|^{2}}\bigg)
\int_{\mathbb{D}}\frac{1}{|1-\bar{z}\zeta|^{t+2}}d\mu(z)\bigg]<\infty
\end{equation}
and
\begin{equation}\label{C:NN2}
\sup_{\zeta\in \mathbb{D}}
\bigg[\frac{(1-|\zeta|^{2})^{s+1}}{\sigma(\zeta)}\log\bigg(\frac{2}{1-|\zeta|^{2}}\bigg)
|(T_{\mu,t}1)'(\zeta)|\bigg]<\infty.
\end{equation}
Combining
Lemma \ref{prop-reverse}, \eqref{C:NN3}, \eqref{C:NN4}, \eqref{C:NN1} with \eqref{C:NN2},
for any $j>k_{2}$, we obtain
\begin{align*}
I_{1,j}^{1}=&
\int_{r_{2}\mathbb{D}}|f_{j}(\zeta)|^{p}\kappa(\zeta)\frac{(1-|\zeta|^{2})^{s+1}}{\sigma(\zeta)}
\log\bigg(\frac{2}{1-|\zeta|^{2}}\bigg)\int_{\mathbb{D}}\frac{1}{|1-\bar{z}\zeta|^{t+2}}d\mu(z)dA(\zeta)\nonumber\\
+&\int_{\mathbb{D}\backslash r_{2}\mathbb{D}}|f_{j}(\zeta)|^{p}\kappa(\zeta)
\frac{(1-|\zeta|^{2})^{s+1}}{\sigma(\zeta)}
\log\bigg(\frac{2}{1-|\zeta|^{2}}\bigg)\int_{\mathbb{D}}\frac{1}{|1-\bar{z}\zeta|^{t+2}}d\mu(z)dA(\zeta)\nonumber\\
\lesssim&\varepsilon \sup_{\zeta\in \mathbb{D}}\bigg[\frac{(1-|\zeta|^{2})^{s+1}}
{\sigma(\zeta)}\log\bigg(\frac{2}{1-|\zeta|^{2}}\bigg)
\int_{\mathbb{D}}\frac{1}{|1-\bar{z}\zeta|^{t+2}}d\mu(z)\bigg]+\varepsilon\|f_{j}\|^{p}_{L_{a}^{p}(\omega)}\lesssim\varepsilon
\end{align*}
and
\begin{align*}
I_{1,j}^{2}=&
\int_{r_{2}\mathbb{D}}|f_{j}(\zeta)|^{p}\kappa(\zeta)\frac{(1-|\zeta|^{2})^{s+1}}{\sigma(\zeta)}
\log\bigg(\frac{2}{1-|\zeta|^{2}}\bigg)|(T_{\mu,t}1)'(\zeta)|dA(\zeta)\nonumber\\
+&\int_{\mathbb{D}\backslash r_{2}\mathbb{D}}|f_{j}(\zeta)|^{p}\kappa(\zeta)\frac{(1-|\zeta|^{2})^{s+1}}{\sigma(\zeta)}
\log\bigg(\frac{2}{1-|\zeta|^{2}}\bigg)|(T_{\mu,t}1)'(\zeta)|dA(\zeta)\nonumber\\
\lesssim&\varepsilon \sup_{\zeta\in \mathbb{D}}
\bigg[\frac{(1-|\zeta|^{2})^{s+1}}{\sigma(\zeta)}\log\bigg(\frac{2}{1-|\zeta|^{2}}\bigg)
|(T_{\mu,t}1)'(\zeta)|\bigg]+\varepsilon\|f_{j}\|^{p}_{L_{a}^{p}(\omega)}\lesssim\varepsilon.
\end{align*}
Thus, we conclude that
\begin{equation}\label{RC10}
|I_{1,j}|\lesssim\varepsilon\|g\|_{\mathcal{B}}, \quad \textmd{for}~j>k_{2}.
\end{equation}
Using the similar argument as in \eqref{new3}, we have
\begin{equation}\label{new4}
|I_{2,j}|\lesssim
\|f_{j}\|^{1-p}_{L_{a}^{p}(\omega)}
\int_{\mathbb{D}}|f_{j}(\zeta)|^{p}\kappa(\zeta)\frac{(1-|\zeta|^{2})^{s+1}}{\sigma(\zeta)}
\int_{\mathbb{D}}\frac{|g(u)-g(\zeta)|}{|1-\bar{\zeta}u|^{t+3}}d\mu(u)dA(\zeta).
\end{equation}
By adopting the same procedure as in \eqref{PR05}, we can carry out
\begin{equation}\label{E:I2-C2}
\sup_{\zeta\in\mathbb{D}}\bigg(\frac{(1-|\zeta|^{2})^{s+1}}{\sigma(\zeta)}
\int_{\mathbb{D}}\frac{|g(u)-g(\zeta)|}{|1-\bar{\zeta}u|^{t+3}}d\mu(u)\bigg)
\lesssim \|g\|_{\mathcal{B}}.
\end{equation}
Note that there exists $r_{3}\in(0,1)$ such that for any $\zeta\in\mathbb{D}$ with $r_{3}\leq|\zeta|<1$,
\[
(1-|\zeta|)^{t-t_{2}+c}<\frac{(1-r_{1})^{c+t+2}}{\mu(r_{1}\mathbb{D})}\varepsilon,
\]
where $r_{1}$ is the one in \eqref{RC02v}. Then there also exists $k_{3}\in\mathbb{N}$ such that for any $j>k_{3}$, $|f_{j}|^{p}<\varepsilon$ on $r_{3}\mathbb{D}$.
The same reasoning as in \eqref{RCCCC} gives that for any $\zeta\in\mathbb{D}\backslash r_{3}\mathbb{D}$, it holds that
\begin{equation}\label{E:I2-C3}
\frac{(1-|\zeta|^{2})^{s+1}}{\sigma(\zeta)}
\int_{\mathbb{D}}\frac{|g(u)-g(\zeta)|}{|1-\bar{\zeta}u|^{t+3}}d\mu(u)\lesssim\varepsilon\|g\|_{\mathcal{B}}.
\end{equation}
According to Lemma \ref{prop-reverse}, \eqref{new4}, \eqref{E:I2-C2} and \eqref{E:I2-C3}, for $j>k_{3}$, we obtain
\begin{align*}
|I_{2,j}|\lesssim&
\|f_{j}\|^{1-p}_{L_{a}^{p}(\omega)}\int_{r_{3}\mathbb{D}}|f_{j}(\zeta)|^{p}\kappa(\zeta)\frac{(1-|\zeta|^{2})^{s+1}}{\sigma(\zeta)}
\int_{ \mathbb{D}}\frac{|g(u)-g(\zeta)|}{|1-\bar{\zeta}u|^{t+3}}d\mu(u)dA(\zeta)\nonumber\\
+&\|f_{j}\|^{1-p}_{L_{a}^{p}(\omega)}\int_{\mathbb{D}\backslash r_{3}\mathbb{D}}|f_{j}(\zeta)|^{p}\kappa(\zeta)\frac{(1-|\zeta|^{2})^{s+1}}{\sigma(\zeta)}
\int_{\mathbb{D}}\frac{|g(u)-g(\zeta)|}{|1-\bar{\zeta}u|^{t+3}}d\mu(u)dA(\zeta)\nonumber\\
\lesssim&\varepsilon\sup_{\zeta\in\mathbb{D}}\bigg(\frac{(1-|\zeta|^{2})^{s+1}}{\sigma(\zeta)}
\int_{\mathbb{D}}\frac{|g(u)-g(\zeta)|}{|1-\bar{\zeta}u|^{t+3}}d\mu(u)\bigg)\|f_{j}\|^{1-p}_{L_{a}^{p}(\omega)}\nonumber\\
+&\varepsilon\|g\|_{\mathcal{B}}\|f_{j}\|_{L_{a}^{p}(\omega)}
\lesssim\varepsilon\|g\|_{\mathcal{B}}.
\end{align*}
Therefore, we conclude that
\begin{equation}\label{RC14}
|I_{2,j}|\lesssim\varepsilon\|g\|_{\mathcal{B}},\quad \textmd{for}~j>k_{3}.
\end{equation}
It follows from \eqref{decomposition}, \eqref{RC10} and \eqref{RC14} that $T_{\mu,\omega}:L_{a}^{p}(\omega)\to L_{a}^{1}(\omega)$ is compact, which completes the whole proof.
\end{proof}

\section{Toeplitz operators with $L^1(\omega)$ symbols}
Recall that for a given interval $I\subseteq \mathbb{T}$, the \textit{Carleson square} is
\[
S(I)=\{z\in\mathbb{D}:\frac{z}{|z|}\in I,1-|I|\leq |z|<1\},
\]
where $|\cdot|$ is the normalized arc measure on $\mathbb{T}$. For each $a\in\mathbb{D}\backslash\{0\}$, denote the Carleson square $S(I_a)$ by $S(a)$, where
$I_{a}=\bigg\{z\in\mathbb{T}:|\arg (a\bar{z})|\leq\frac{1-|a|}{2}\bigg\}.$

Next, we turn to the boundedness of $T_{\varphi,\omega}$ on $L^1_a(\omega)$, and start with a weighted version of a result of Zhu \cite[Proposition 12]{zhu1989}, which will be used in the proof of Theorem \ref{T:bound10}.
\begin{proposition}\label{P:weightedzhu} Let $\omega$ be a $\mathcal{D}$ weight and $\varphi\in L^{1}(\omega)$. If there exists $r\in(0,1)$ such that
\[
\sup_{z\in\mathbb{D}}\frac{\ln\bigg(\frac{1}{1-|z|}\bigg)}{\omega(D(z,r))}\int_{D(z,r)}|\varphi(\zeta)|\omega(\zeta)dA(\zeta)
<+\infty,\]
then the Toeplitz operator $T_{\varphi,\omega}$ is bounded on $L_{a}^{1}(\omega)$.
\end{proposition}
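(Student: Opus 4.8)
The plan is to use the duality $(L^1_a(\omega))^{*}\simeq\mathcal{B}$ from Lemma \ref{L:bloch} and reduce the boundedness of $T_{\varphi,\omega}$ to a testing inequality against Bloch functions. First I would record the pairing identity: for a polynomial $f$ and $g\in\mathcal{B}$, unwinding the definition of $T_{\varphi,\omega}$ and using the reproducing property of $K_z^\omega$ together with Fubini gives
\[
\langle T_{\varphi,\omega}(f),g\rangle_{L^2(\omega)}=\int_{\mathbb{D}}f(\zeta)\overline{g(\zeta)}\varphi(\zeta)\omega(\zeta)\,dA(\zeta).
\]
Since polynomials are dense in $L^1_a(\omega)$ and $\|T_{\varphi,\omega}(f)\|_{L^1_a(\omega)}\asymp\sup_{\|g\|_{\mathcal{B}}\le1}|\langle T_{\varphi,\omega}(f),g\rangle_{L^2(\omega)}|$, it then suffices to prove
\[
\int_{\mathbb{D}}|f(\zeta)||g(\zeta)||\varphi(\zeta)|\omega(\zeta)\,dA(\zeta)\lesssim\|f\|_{L^1(\omega)}\|g\|_{\mathcal{B}}.
\]

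The decisive point is a cancellation of logarithms. I would insert the standard Bloch growth bound $|g(\zeta)|\lesssim\|g\|_{\mathcal{B}}\log\frac{2}{1-|\zeta|^2}$, reducing matters to estimating $\int_{\mathbb{D}}|f||\varphi|\log\frac{2}{1-|\zeta|^2}\,\omega\,dA$. Passing to the regular weight $\kappa(z)=(1-|z|)^{-1}\widehat{\omega}(z)$ of Lemma \ref{prop-reverse}, I fix an $r$-lattice $\{z_j\}$ so that $\mathbb{D}=\bigcup_j D(z_j,r)$ with bounded overlap $N=N(r)$. On each $D(z_j,r)$ one has $1-|\zeta|\asymp1-|z_j|$ and, for $|z_j|$ bounded away from the origin, $\log\frac{2}{1-|\zeta|^2}\asymp\log\frac{1}{1-|z_j|}$; moreover $\omega(D(z_j,r))\asymp\widehat{\omega}(z_j)(1-|z_j|)\asymp\kappa(z_j)(1-|z_j|^2)^2$ by the forward/reverse doubling of $\omega$ (cf. Lemma \ref{P:D}). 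Bounding $|f|$ by its supremum over $D(z_j,r)$ and invoking the hypothesis in the form $\int_{D(z_j,r)}|\varphi|\omega\,dA\lesssim\omega(D(z_j,r))/\log\frac{1}{1-|z_j|}$, the two logarithms cancel and the $j$-th boundary term is controlled by $\|g\|_{\mathcal{B}}\,(\sup_{D(z_j,r)}|f|)\,\omega(D(z_j,r))$.

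It remains to sum. By the sub-mean value property of the subharmonic function $|f|$ together with $\kappa(u)\asymp\kappa(z_j)$ on $D(z_j,2r)$ (Proposition \ref{la45}), one gets $\sup_{D(z_j,r)}|f|\lesssim(\kappa(z_j)(1-|z_j|^2)^2)^{-1}\int_{D(z_j,2r)}|f|\kappa\,dA$, whence $(\sup_{D(z_j,r)}|f|)\,\omega(D(z_j,r))\lesssim\int_{D(z_j,2r)}|f|\kappa\,dA$; summing over $j$ and using the bounded overlap of the dilated disks yields $\lesssim N\|f\|_{L^1(\kappa)}\asymp\|f\|_{L^1(\omega)}$, again by Lemma \ref{prop-reverse}. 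Finally I would isolate the finitely many lattice points with $|z_j|\le r_0$: there $\log\frac{2}{1-|\zeta|^2}$ and $|g|$ are bounded by absolute constants times $\|g\|_{\mathcal{B}}$, $\int_{D(z_j,r)}|\varphi|\omega\,dA\le\|\varphi\|_{L^1(\omega)}$, and $\omega(D(z_j,r))$ is bounded below, so each such term is $\lesssim\|f\|_{L^1(\omega)}\|g\|_{\mathcal{B}}$ via the same sub-mean value estimate. I expect the main obstacle to be exactly the logarithmic cancellation: it forces one to match the Bloch growth rate $\log\frac{2}{1-|\zeta|^2}$ against the weight $\log\frac{1}{1-|z|}$ appearing in the hypothesis and to confirm that the two are comparable on each lattice disk — a comparison that degenerates near the origin, which is precisely why that region must be handled separately.
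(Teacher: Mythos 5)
Your proof is correct in substance, but it takes a genuinely different route from the paper's. The paper never invokes the duality $(L_a^1(\omega))^*\simeq\mathcal{B}$ here: it bounds $\|T_{\varphi,\omega}(f)\|_{L_a^1(\omega)}$ by brute force, pulling absolute values inside the defining integral and applying Fubini to get $\int_{\mathbb{D}}|f(z)\varphi(z)|\,\|K_z^{\omega}\|_{L_a^1(\omega)}\,\omega(z)dA(z)$; the logarithm then enters through the kernel estimate $\|K_z^{\omega}\|_{L_a^1(\omega)}\asymp\ln\frac{1}{1-|z|}$ of Proposition \ref{lla45}, and the hypothesis is converted, via $\omega(D(z,r))\lesssim\widehat{\omega}(z)(1-|z|)$ and the comparability of logarithms on pseudohyperbolic disks, into the statement that $d\mu=\ln(\frac{1}{1-|\zeta|})|\varphi(\zeta)|\omega(\zeta)dA(\zeta)$ satisfies the geometric condition of Lemma \ref{P:D}, so the far region is finished by the Carleson embedding and only the near-origin region needs the sub-mean-value argument. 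In your version the same logarithm arises instead from the growth bound on Bloch functions — the two mechanisms are dual to one another, since $\|K_z^{\omega}\|_{L_a^1(\omega)}\asymp\sup_{\|g\|_{\mathcal{B}}\leq1}|g(z)|$ — and you prove the embedding by hand with the lattice and bounded-overlap argument rather than citing Lemma \ref{P:D}. What the paper's route buys is economy and the absence of technical debt: the double-integral bound shows at once that $T_{\varphi,\omega}(f)$ is well defined and lies in $L^1(\omega)$, whereas your duality step tacitly assumes $T_{\varphi,\omega}(f)\in L_a^1(\omega)$ before using $\|F\|_{L_a^1(\omega)}\asymp\sup_{\|g\|_{\mathcal{B}}\leq1}|\langle F,g\rangle_{L^2(\omega)}|$, and your pairing identity needs a Fubini/dominated-convergence justification (both are patchable with estimates you already derive). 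What your route buys is uniformity with the rest of the paper, since it is exactly the mechanism of the sufficiency proof of Theorem \ref{T:regular}. Two corrections you should make: the inequality $\omega(D(z,r))\lesssim\widehat{\omega}(z)(1-|z|)$ is not contained in Lemma \ref{P:D} — it is precisely what the paper spends the first part of its proof establishing, via Carleson squares, \eqref{E:S} and \cite[Lemma A]{PRS} — so it must be proved or cited properly; and you should claim only this upper bound, not the two-sided $\omega(D(z,r))\asymp\widehat{\omega}(z)(1-|z|)$, since the lower bound can fail for a $\mathcal{D}$ weight that vanishes on an annulus (fortunately your summation step uses only the upper bound). Also replace $D(z_j,2r)$ by $D(z_j,r')$ for some fixed $r'\in(r,1)$, since $2r$ may exceed $1$.
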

\begin{proof}
Assume that $\omega\in\mathcal{D}$ and $\varphi \in L^{1}(\omega)$. Note that for any $z\in\mathbb{D}$,
\[
\omega(S(z))=\frac{(1-|z|)}{\pi}\int_{|z|}^{1}\omega(s)sds
\geq\frac{(1-|z|)}{2\pi}\int_{\frac{1+|z|}{2}}^{1}\omega(s)ds
\gtrsim(1-|z|)\widehat{\omega}(z).
\]
Then we have
\begin{equation}\label{E:S}
\omega(S(z))\asymp(1-|z|)\widehat{\omega}(z),\quad z\in\mathbb{D}.
\end{equation}
For any fixed $r\in(0,1)$, by \cite[pp. 65]{D1}, we note that $D(z,r)\subseteq S((1-h)e^{i\arg z})$ for $|z|>r$, where $h=\max\bigg\{1-\frac{|z|-r}{1-r|z|},2\sin^{-1}\big(\frac{r(1-|z|^{2})}{|z|(1-r^{2})}\big)\bigg\}$.
Recall a radial weight $\vartheta\in\widehat{\mathcal{D}}$ if and only if
there exists $\alpha>0$ ($\alpha$ only depends on $\vartheta$) such that
\[
\widehat{\vartheta}(r)\lesssim \bigg(\frac{1-r}{1-t}\bigg)^{\alpha}\widehat{\vartheta}(t),\quad 0\leq r\leq t<1
\]
(see \cite[Lemma A]{PRS}).
Together with
\eqref{E:S}, we deduce
\[
\omega(D(z,r))\leq \omega(S((1-h)e^{i\arg z}))\asymp h\widehat{\omega}(1-h)\lesssim\widehat{\omega}(z)(1-|z|), \quad |z|>r,
\]
which yields
\begin{equation}\label{new00}
\omega(D(z,r))\lesssim\widehat{\omega}(z)(1-|z|),\quad z\in\mathbb{D}.
\end{equation}
Hence, we obtain
\begin{eqnarray*}
\sup_{z\in\mathbb{D}}\frac{\ln\bigg(\frac{1}{1-|z|}\bigg)}{\widehat{\omega}(z)(1-|z|)}
\int_{D(z,r)}|\varphi(\zeta)|\omega(\zeta)dA(\zeta)<+\infty.
\end{eqnarray*}
Consequently,
\begin{eqnarray*}
\sup_{z\in\mathbb{D}}\frac{1}{\widehat{\omega}(z)(1-|z|)}
\int_{D(z,r)}\ln\bigg(\frac{1}{1-|\zeta|}\bigg)|\varphi(\zeta)|\omega(\zeta)dA(\zeta)<+\infty.
\end{eqnarray*}
By Lemma \ref{P:D}, we know that
\[
d\mu(\zeta):=\ln\bigg(\frac{1}{1-|\zeta|}\bigg)|\varphi(\zeta)|\omega(\zeta)dA(\zeta)
\]
is a Carleson measure for $L_{a}^{1}(\omega)$. By Proposition \ref{lla45}, there exists $r_{0}\in(0,1)$ such that for any $z\in\mathbb{D}$ with $r_{0}\leq|z|<1$,
\[
\|K_{z}^{\omega}\|_{L_{a}^{1}(\omega)}\asymp \ln\bigg(\frac{1}{1-|z|}\bigg).
\]
Let $\kappa(z)=(1-|z|)^{-1}\widehat{\omega}(z)$. Then for $f\in L_{a}^{1}(\omega )$, we get
\begin{align*}
\|T_{\varphi,\omega}(f)\|_{L_{a}^{1}(\omega)}
\leq&\int_{\mathbb{D}}\int_{\mathbb{D}}|f(z)\varphi(z)K_{\zeta}^{\omega}(z)|\omega(z)dA(z)\omega(\zeta)dA(\zeta)\nonumber\\
=&\int_{\mathbb{D}}|f(z)\varphi(z)|\int_{\mathbb{D}}|K_{z}^{\omega}(\zeta)|\omega(\zeta)dA(\zeta)\omega(z)dA(z)\nonumber\\
=&\int_{D(0,r_{0})}|f(z)\varphi(z)|\|K_{z}^{\omega}\|_{L_{a}^{1}(\omega)}\omega(z)dA(z)\nonumber\\
+&
\int_{\mathbb{D}\backslash D(0,r_{0})}|f(z)\varphi(z)|\|K_{z}^{\omega}\|_{L_{a}^{1}(\omega )}\omega(z)dA(z)\nonumber\\
\lesssim&\|\varphi\|_{L^{1}(\omega)}\cdot\sup_{z\in D(0,r_{0})}\|K_{z}^{\omega}\|_{L_{a}^{1}(\omega )}\cdot\sup_{z\in D(0,r_{0})}|f(z)|\nonumber\\
+&
\int_{\mathbb{D}\backslash D(0,r_{0})}|f(z)\varphi(z)|\ln\bigg(\frac{1}{1-|z|}\bigg)\omega(z)dA(z)\nonumber\\
\lesssim&\|\varphi\|_{L^{1}(\omega)}\cdot\sup_{z\in D(0,r_{0})}\|K_{z}^{\omega}\|_{L_{a}^{1}(\omega )}\cdot
\sup_{z\in D(0,r_{0})}\sup_{I:z\in S(I)}\frac{1}{\kappa(S(I))}\int_{S(I)}|f(\zeta)|\kappa(\zeta)dA(\zeta)\nonumber\\
+&
\int_{\mathbb{D}\backslash D(0,r_{0})}|f(z)\varphi(z)|\ln\bigg(\frac{1}{1-|z|}\bigg)\omega(z)dA(z)\nonumber\\
\lesssim&\|\varphi\|_{L^{1}(\omega)}\cdot\sup_{z\in D(0,r_{0})}\|K_{z}^{\omega}\|_{L_{a}^{1}(\omega)}\cdot
\frac{\|f\|_{L_{a}^{1}(\omega)}}{\widehat{\kappa}(r_{0})(1-r_{0})}+\int_{\mathbb{D}}|f(z)|d\mu(z)\nonumber\\
\lesssim&\|f\|_{L_{a}^{1}(\omega)},
\end{align*}
where the last third inequality is due to Lemma \ref{prop-reverse} and \cite[Lemma 2.5]{PR}, the last second inequality is due to Lemma \ref{prop-reverse} and \eqref{E:S}, and the last inequality is due to the fact that $\mu$ is a Carleson measure for $L_{a}^{1}(\omega)$.
Thus,
$T_{\varphi,\omega}:L_{a}^{1}(\omega)\rightarrow L_{a}^{1}(\omega)$ is bounded, as desired.
\end{proof}

Now we are ready to prove Theorem \ref{T:bound10}.
\begin{proof}[Proof of Theorem \ref{T:bound10}] From the assumption and \eqref{new00}, we obtain that there exist constants $\varepsilon>0$ and $r\in(0,1)$ such that
\begin{equation}\label{eq00000}
\sup_{z\in\mathbb{D}}\frac{1}{\widehat{\omega}(z)^{\varepsilon}(1-|z|)^{\varepsilon}\omega(D(z,r))}
\int_{D(z,r)}|\varphi(\zeta)|\omega(\zeta)dA(\zeta)<+\infty.
\end{equation}
Let
$$
\kappa(s)=\frac{\widehat{\omega}(s)}{1-s}, \quad s\in[0,1).
$$
From Lemma \ref{prop-reverse}, we know that $\kappa$ is regular. Then there exists a constant $C>1$ which only depends on $\omega$ such that
\[
C^{-1}\kappa(s)(1-s)\leq \widehat{\kappa}(s)\leq C\kappa(s)(1-s), \quad s\in[0,1).
\]
Hence,
\begin{equation}\label{eq000}
\kappa(s)\leq C\widehat{\kappa}(0)(1-s)^{\frac{1}{C}-1}
\end{equation}
for $s\in[0,1)$. Moreover, for any $z\in\mathbb{D}$ and $t\in(0,\infty)$, we have
\begin{equation}\label{eq0000}
\ln\bigg(\frac{1}{1-|z|}\bigg)\leq\frac{1}{t}(1-|z|)^{-t}.
\end{equation}
Let
\[
I=
\sup_{z\in\mathbb{D}}\frac{\ln\bigg(\frac{1}{1-|z|}\bigg)}{\omega(D(z,r))}\int_{D(z,r)}|\varphi(\zeta)|\omega(\zeta)dA(\zeta).
\]
It follows from (\ref{eq00000}), (\ref{eq000}) and (\ref{eq0000}) that there exists a positive constant $C'$ such that
\begin{align*}
I\leq&\frac{C}{\varepsilon(1+C)}\sup_{z\in\mathbb{D}}\frac{1}{(1-|z|)^{\varepsilon+\frac{\varepsilon}{C}}
\omega(D(z,r))}\int_{D(z,r)}|\varphi(\zeta)|\omega(\zeta)dA(\zeta)\nonumber\\\\
\leq&\frac{C^{\varepsilon+1}\widehat{\kappa}(0)^{\varepsilon}}{\varepsilon(1+C)}
\sup_{z\in\mathbb{D}}\frac{1}{\widehat{\omega}(z)^{\varepsilon}
(1-|z|)^{\varepsilon}\omega(D(z,r))}\int_{D(z,r)}|\varphi(\zeta)|\omega(\zeta)dA(\zeta)\nonumber\\\\
\leq&C'\sup_{z\in\mathbb{D}}\frac{1}{\widehat{\omega}(z)^{\varepsilon}(1-|z|)^{\varepsilon}\omega(D(z,r))}
\int_{D(z,r)}|\varphi(\zeta)|\omega(\zeta)dA(\zeta)\nonumber\\\\
<&+\infty.
\end{align*}
The conclusion then follows from Proposition \ref{P:weightedzhu}.
\end{proof}

\section*{Acknowledgements}
We thank Professor D. H. Luecking for his explanation of some details in \cite{Lu1}. This work is partially supported by National Natural Science Foundation of China. Y. Duan thanks School of Mathematics of Fudan University for the support of his visit to Shanghai. Z. Wang thanks School of Mathematics and Statistics, Northeast Normal University for the support of his visit to Changchun. We thank the anonymous referees for many valuable suggestions which make the paper more readable.

%\bibliographystyle{plain}
%\bibliography{weightedbergman}

\end{document}